\documentclass[11pt,reqno]{amsart}

\numberwithin{equation}{section}
\usepackage{times}
\usepackage{amsmath,amsfonts,amstext,amssymb,amsbsy,amsopn,amsthm,mathrsfs}
\usepackage{dsfont}
\usepackage{graphicx}   
\usepackage{hyperref}
\usepackage{accents}
\usepackage{enumerate}
\usepackage{xcolor}
\usepackage{verbatim}
\usepackage{esint}

\usepackage[normalem]{ulem}
\usepackage{cancel}

\newcommand{\RR}{\mathds{R}}

\newcommand{\cI}{\mathcal{I}}

\newcommand{\cR}{\mathcal{R}}

\newtheorem{theorem}[equation]{Theorem}
\newtheorem*{theorem*}{Theorem}

\newtheorem{proposition}[equation]{Proposition}
\newtheorem{lemma}[equation]{Lemma}
\newtheorem{corollary}[equation]{Corollary}
\theoremstyle{definition}
\newtheorem{definition}[equation]{Definition}

\theoremstyle{remark}
\newtheorem{remark}[equation]{Remark}
\theoremstyle{remark}

\theoremstyle{remark}

\theoremstyle{remark}
\theoremstyle{remark}

\begin{document}


\title[]{Optimal regularity for the arrival time equation}

\author{Yiqi Huang, Jingze Zhu}
\address[Yiqi Huang]{Department of Mathematics, MIT, 77 Massachusetts Avenue, Cambridge, MA 02139-4307, USA}
 \email{yiqih777@mit.edu}
 \address[Jingze Zhu]{Institute of Geometry and Physics, USTC, No. 99 Xiupu Road, Shanghai 201315, China}
 \email{zhujz3@ustc.edu.cn}

\date{\today}
\begin{abstract}
  For a convex mean curvature flow in $\RR^{n+1}$, the corresponding arrival time function solves a degenerate elliptic equation that becomes singular at extinction. We prove that it is $C^{24,\alpha}$ for any $0<\alpha<1$ in the plane, with a logarithmic modulus for its twenty-fourth derivatives. We prove that this bound is sharp combining regularity obstruction in our companion paper \cite{huangzhu}. We obtain asymptotic expansions of the rescaled flow to any prescribed order, including derivative estimates for the remainder. Combined with the work of \v{S}e\v{s}um for $n\ge 2$, this also gives the sharp $C^{2,2/n}$ regularity for convex arrival time functions in higher dimensions. 
\end{abstract}
\maketitle

\setcounter{tocdepth}{1}
\tableofcontents

\section{Introduction}

Let $M_t$ be a smooth, closed and mean-convex hypersurface moving by mean curvature in $\RR^{n+1}$. Let $\Omega$ be the region bounded by $M_0$. The \emph{arrival time} $U$ assigns to each point inside $\Omega$ the time at which the flow reaches it. Thus away from its critical set, its level sets are the hypersurface $M_t$. The equation for these level sets is
\begin{equation}\label{e:intro level set}
\begin{cases}
\Delta U- \frac{D^2U(DU,DU)}{|DU|^2} = -1. & \text{in } \Omega,\\
U = 0 & \text{on } \partial\Omega,
\end{cases}
\end{equation}
The leading coefficient matrix in \eqref{e:intro level set} has a zero eigenvalue in the gradient direction and hence the equation is highly degenerate. Moreover, the equation is undefined at the critical points. The level-set formulation gives a Lipschitz viscosity solution through singularities of mean curvature flow \cite{es1,cgg}, but it does not by itself determine how regular $U$ is beyond Lipschitz; see also \cite{os}.

Equation \eqref{e:intro level set} also lies at the endpoint of a family of degenerate elliptic equations: the $p$-Laplacian. Wherever $DU \neq 0$, 
\begin{equation}\label{e:intro p laplacian}
    |DU|^{2-p} \text{div} (|DU|^{p-2}DU) =\Delta U +(p-2)\frac{D^2U(DU,DU)}{|DU|^2}.
\end{equation}
As $p\downarrow 1$, the right hand side becomes the operator in \eqref{e:intro level set} \cite{e}. The same equation occurs in the deterministic differential games of Kohn and Serfaty: for a convex planar domain, the arrival time arises as a limiting exit-time value function \cite{ks}; see also \cite{sp,koh,ks09,gl}. At the critical point, it is the geometry of the shrinking flow that provides regularity beyond what the formal operator suggests. At the other endpoint $p \to \infty$, At the other endpoint $p\to\infty$, Savin and Evans-Savin proved $C^1$ and $C^{1,\alpha}$ regularity for infinity-harmonic functions in the plane \cite{sa,evsa}. Xu recently obtained the sharp $C^{1,1/3}$ bound through inverse mean curvature flow clusters \cite{xu}.

Back to the arrival time equation, the regularity strongly depends on the flow and the dimension. For general mean-convex flows, Colding and Minicozzi \cite{cm16a,cm18} proved that arrival time is twice differentiable with bounded second derivatives and hence $C^{1,1}$. Generally it is not $C^2$ and the continuity of the Hessian requires additional structure. Their results build on the uniqueness of tangent flows and the structure of the singular set \cite{cm15,cm16b}. 

For convex flows, Huisken \cite{hui93} proved $C^2$ regularity at the extinction point. When $n\ge 2$, \v{S}e\v{s}um found convex examples that are not $C^3$ \cite{se} (see also \cite{str}) and Sun-Xue proved that $C^2$ is the generic regularity \cite{sx24}. For convex curves, Kohn and Serfaty proved $C^3$ regularity in the plane \cite{ks}. However, it remains largely open how regular the planar convex arrival time functions are. 

We prove that the planar arrival time is $C^{24,\alpha}$ for any $0<\alpha<1$. Combining with the regularity obstruction result in our companion paper \cite{huangzhu} proves that this regularity is \emph{optimal}. This identifies the regularity threshold far beyond the earlier $C^3$ bound. To our knowledge, this is the first sharp regularity estimate of such high order for a degenerate elliptic equation.  

\begin{theorem}[Planar regularity]\label{t:intro planar}
    Suppose $U$ be a solution to \eqref{e:intro level set} in some convex $\Omega \subset \RR^2$. Then $U$ is $C^{24,\alpha}$ for every $0<\alpha<1$. More precisely, for any compact $K \subset \Omega$ there is a constant $C_K$ such that
\begin{equation}\label{e:intro modulus}
      |D^{24}U(x)-D^{24}U(y)|
      \leq C_K|x-y| \big(1+|\log|x-y||\big),  \qquad  x,y \in K, x\neq y.
    \end{equation}
    Moreover, there exists some arrival time function $U$ which is not $C^{24,1}$ at some $x_0 \in \Omega$.
\end{theorem}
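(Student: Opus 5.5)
Away from the unique extinction point $x_0$ (the critical point of $U$ by Gage--Hamilton/Grayson convexity), $DU\neq 0$ and the equation is uniformly elliptic along level sets, so $U$ is smooth there by standard theory. The whole statement therefore reduces to a local analysis at $x_0$. My plan is to write the flow near extinction in rescaled form. With $T$ the extinction time, set $\tau=-\log(T-t)$ and $\tilde M_\tau=(T-t)^{-1/2}(M_t-x_0)$. By Gage--Hamilton, $\tilde M_\tau$ converges smoothly to the circle of radius $\sqrt2$. Writing $\tilde M_\tau$ as a radial graph $r=\sqrt2(1+v(\theta,\tau))$, $v$ solves a quasilinear parabolic equation whose linearization is $\partial_\tau v=Lv$, with $L=\tfrac12(\partial_\theta^2+1)$ up to normalization. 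The spectrum of $L$ is $\lambda_k=\tfrac12(1-k^2)$, with eigenfunctions $\cos k\theta,\sin k\theta$. The modes $k=0,1$ are fixed by centering at $x_0$ and normalizing $T$, and the leading decaying mode is $e^{-\frac32\tau}\cos(2\theta-\phi)$. The arrival time is recovered from $v$ by the relation $|x-x_0|=\sqrt{2(T-U)}\,(1+v(\theta,-\log(T-U)))$, i.e. $T-U=\tfrac{|x-x_0|^2}{2}\big(1+F(\theta,|x-x_0|)\big)$. The term $e^{\lambda_k\tau}$ becomes a homogeneous contribution of order $|x|^{2-2\lambda_k}=|x|^{2+(k^2-1)}=|x|^{k^2+1}$ times a trigonometric polynomial, which is smooth exactly when it is a polynomial.

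The central step is an asymptotic expansion of $v$ to arbitrary order: for each $N$,
\[
v=\sum_{j} e^{-\mu_j\tau}P_j(\theta,\tau)+R_N,\qquad |\partial_\theta^\ell\partial_\tau^m R_N|\le C_{N,\ell,m}e^{-N\tau},
\]
where the exponents $\mu_j$ lie in the additive semigroup generated by the $-\lambda_k$, and the $P_j$ are trigonometric polynomials, possibly polynomial in $\tau$ when resonances occur. I would prove this inductively. Subtract the expansion found so far, and treat the quadratic and higher nonlinear terms as forcing. Project onto the finitely many eigenmodes with decay slower than $e^{-N\tau}$, and solve the ODEs for their coefficients explicitly. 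For the complement, use a spectral-gap energy estimate (parabolic $L^2$ decay beyond the gap), then parabolic Schauder/interior estimates on unit $\tau$-intervals to upgrade to $C^\ell$ bounds. The input is exponential convergence to the circle, which already follows from Gage--Hamilton.

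Next I would translate back to $U$. The term $e^{-\mu\tau}\tau^p P(\theta)$ yields a contribution $|x|^{2+2\mu}(\log|x|)^p P(\theta)$ to $U$. Products and compositions from inverting the implicit relation keep this form, and the remainder with $N$ large is $C^{m}$ with bounds by the derivative estimates on $R_N$. A term is $C^\infty$ if it is a polynomial. Otherwise it fails to be smooth beyond order $2+2\mu$. A term of degree $d$ with $p\ge1$ gives exactly $D^{d-1}$ with modulus $|x-y|(1+|\log|x-y||)$, while one with $p=0$ but non-polynomial angular part gives at most $C^{d-1,1}$. The problem therefore reduces to arithmetic and algebra. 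I would show that all terms of homogeneous degree $\le 24$ are genuinely polynomial, and that the first possible obstruction sits at degree $25$ with a logarithmic factor. I expect this to come from a resonance where a nonlinear product of lower modes hits an eigenvalue, as in degree $25=k^2+1$ for $k=\ldots$ (here one must compute; $k^2+1=26$ at $k=5$, so the resonance should come from nonlinear combinations, and I would track it). This gives \eqref{e:intro modulus}, and hence $C^{24,\alpha}$ for all $\alpha<1$.

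The main obstacle is verifying polynomiality of every term up to degree $24$. This means checking at each resonant order that the structure of the equation, including its rotation invariance and the exact coupling between $\tau$-exponents and angular frequencies, forces the coefficients into polynomial form, and that no $\log$ appears earlier. I would organize this by the grading (degree, angular frequency) and prove that the angular frequency of a degree-$d$ term is $\le d$ with parity matching $d$, which is what polynomiality requires. The first violation is where the $\log$ coefficient can be nonzero. For the sharpness claim, I would invoke the companion paper \cite{huangzhu}, which exhibits a flow whose degree-$25$ logarithmic coefficient is nonzero; that flow is therefore not $C^{24,1}$ at $x_0$.
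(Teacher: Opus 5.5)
Your plan is the paper's plan: expand the rescaled graph function $v$ to all orders with derivative estimates, invert $r=s(\sqrt2+v)$ with $s=\sqrt{T-U}$, prove the coefficients are polynomials by a frequency/parity grading, locate the first logarithm at a resonance, and cite \cite{huangzhu} for sharpness. But as written it does not establish the number $24$, and your bookkeeping points to a different answer.

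On $\mathbb{S}^1(\sqrt2)$ one has $\Delta_\Sigma=\tfrac12\partial_\theta^2$, so the linearized operator is $L=\tfrac12\partial_\theta^2+1$, not $\tfrac12(\partial_\theta^2+1)$. Consequently:
\begin{itemize}
\item The decay rates are $\lambda_k=\tfrac{k^2-2}{2}$, i.e.\ $1,\tfrac72,7,\tfrac{23}{2}$ for $k=2,3,4,5$.
\item The leading mode decays like $e^{-\tau}$, not $e^{-3\tau/2}$.
\item Since $e^{-\mu\tau}=s^{2\mu}$, a term at rate $\mu$ enters $T-U$ at degree $2+2\mu$. So mode $k$ sits at degree $k^2$, not $k^2+1$; the $k=2$ mode produces $P_4$.
\end{itemize}
With your rates $\tfrac{k^2-1}{2}$, the $k=2$ mode would contribute $r^5\cos2\theta$ to $U$. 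That already violates the rule ``frequency at most $d$ with the parity of $d$'' you intend to prove. Also, no eigenvalue would sit at degree $25$, so no resonance could put a logarithm there; this is why your computation stalls at ``$k^2+1=26$''. With the correct rates the rule does hold: if $\sum_j(k_j^2-2)=m$ with $k_j\ge2$, then $|\sum_j\epsilon_jk_j|\le\sum_jk_j\le m$ and $\sum_j\epsilon_jk_j\equiv\sum_jk_j^2\equiv m\pmod 2$, while the degree in $U$ is $m+2$.

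The step you defer (``here one must compute'') is exactly where $24$ comes from, so it is the core of the proof, not a routine check. The coefficient $F_{m/2}$ of $e^{-m\tau/2}$ solves $\partial_\tau F_{m/2}-(L+\tfrac m2)F_{m/2}=G_{m/2}$. Here $G_{m/2}$ is a polynomial in lower coefficients and their $\theta$-derivatives, so its frequencies lie in $\mathcal{I}'(m)$, the set of $\sum_j\epsilon_jk_j$ over at least two factors $k_j\ge2$ with $\sum_j(k_j^2-2)=m$. Inductively, if the lower coefficients are $\tau$-independent, a factor $\tau$ can appear only if $\tfrac m2=\lambda_k$ and $\pm k\in\mathcal{I}'(m)$. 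For $3\le m\le24$ this leaves $m=7,14,23$ (that is, $k=3,4,5$), and you must check each case:
\begin{itemize}
\item $\mathcal{I}'(7)=\emptyset$, since only $k_j=2$ is admissible and $7$ is odd.
\item $\mathcal{I}'(14)=\{0,\pm2,\pm6,\pm10,\pm14\}$, which misses $\pm4$.
\item $5\in\mathcal{I}'(23)$, from $14+7+2$ and $4+3-2=5$. So $F_{23/2}=f_{23/2,0}+\tau f_{23/2,1}$ with $f_{23/2,1}\in E_5$ possibly nonzero.
\item This $\tau$-term cannot enter any product before $m=25$, so $F_{12}$ remains $\tau$-independent.
\end{itemize}
Only with this in hand does the inversion, via $\tau=-2\log s$, give polynomial $P_d$ for $d\le26$ and a single term $P^{\log}_{25}(x)\log|x|$. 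Here $P^{\log}_{25}$ is a multiple of $r^{25}f_{23/2,1}$, which is a polynomial because $25-5$ is even and nonnegative.

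A minor point: smoothness of $U$ away from the extinction point comes from the implicit function theorem for the smooth, strictly convex flow, with $|DU|=1/\kappa>0$ where $\kappa$ is the curvature. It does not come from ellipticity, since the operator is degenerate in the gradient direction everywhere.
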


Previous works mainly identified the leading-order terms of the arrival time. In the plane these terms are polynomials, so they do not reveal the first obstruction to regularity. Therefore, to determine the optimal regularity, we study higher-order expansions. After translating $x_0$ to $0$, we can prove the higher-order expansion near $0$ up to any prescribed order. For the regularity question, we state the expansion only through degree 26; the logarithmic term at degree 25 already gives the obstruction
explicitly. 

\begin{theorem}\label{t:intro planar expansion}
    Suppose $U$ be a solution to \eqref{e:intro level set} in some convex $\Omega \subset \RR^2$. Then there are homogeneous polynomials $P_d$ of degree $d$, $4\le d \le 26$, and a homogeneous polynomial $P_{25}^{\log}$ of
degree $25$ such that
\begin{equation}\label{e:intro expansion}
    T- U(x) = \frac{|x|^2}{2} + \sum_{d=4}^{26} P_d(x) + P_{25}^{\log}(x) \log|x| + \cR(x), \qquad  |D^\beta\cR(x)|\le C_{\beta,\varepsilon} |x|^{27-\varepsilon-|\beta|}
\end{equation}
for every multi-index $\beta$ and $\varepsilon>0$.
\end{theorem}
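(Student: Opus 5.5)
The plan is to pass to the rescaled curve shortening flow. By Gage--Hamilton and Grayson, a convex planar curve flow shrinks to a round point at $x_0=0$ at time $T$. Write the rescaled curve $\tilde M_\tau = e^{\tau/2} M_{T-e^{-\tau}}$ in polar form $r = \sqrt{2}\,(1+ u(\theta,\tau))$. Then $u$ solves a quasilinear parabolic equation on $S^1$ whose linearization at $u=0$ is $L = \partial_\theta^2 + 1$, up to normalization. Its eigenvalues are $1-k^2$, with eigenfunctions $\cos k\theta$ and $\sin k\theta$.

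First I will establish exponential decay: $\|u(\tau)\|_{C^m} \le C_m e^{-\lambda \tau}$ for every $m$. This follows from smooth convergence to the circle, with the $k=1$ translation modes removed by centering at $x_0$. The decay is driven by the $k=2$ mode, whose rate is $3$.

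Next I will derive an expansion of the form
\[
u(\theta,\tau)=\sum_{j} \sum_{\ell\le \ell_j} e^{-\mu_j\tau}\,\tau^{\ell}\, a_{j\ell}(\theta)+O(e^{-\mu\tau})
\]
for any prescribed $\mu$, with matching bounds on all $\theta,\tau$ derivatives. The approach is iterative Duhamel on each Fourier mode. Write $u_\tau = Lu + N(u)$, where $N$ is at least quadratic. Each mode satisfies an ODE $\dot c_k = (1-k^2)c_k + N_k$. Given an expansion of $u$ up to order $e^{-\mu\tau}$, $N(u)$ is known up to a better order. Integrating the mode ODE then produces free terms $e^{(1-k^2)\tau}$ and forced terms $e^{-\nu\tau}$. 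When a forcing rate $\nu$ coincides with an eigenvalue rate $k^2-1$ (a resonance), a factor $\tau$ appears.

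Then I translate back to the arrival time. A point $x$ lies on $M_t$ exactly when $|x| = \sqrt{2(T-t)}\,(1+u(\theta,-\log(T-t)))$. Solving for $T-t = T-U(x)$ by the implicit function theorem, a term $e^{-\mu\tau}$ becomes $(T-t)^{\mu}$, which is comparable to $|x|^{2\mu}$ times a function of $\theta$. This converts exponents into homogeneous degrees. A factor $\tau$ becomes $-\log(T-t)\approx -2\log|x|$, which produces the $P^{\log}_{25}\log|x|$ term.

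Homogeneous functions $|x|^d f(\theta)$ are polynomials only when $f$ has Fourier modes of parity and size compatible with $d$. Smoothness of $U$ in $\mathbb{R}^2$ forces each degree-$d$ piece to be a polynomial, except where a resonance creates a genuinely non-smooth term. I will argue this using the $C^{1,1}$/smoothness away from $0$ results and consistency of the expansion, or by checking directly that the generated modes satisfy the required parity. The derivative bounds on $\cR$ come from the parabolic derivative bounds on the remainder together with homogeneous scaling. Concretely, I rescale annuli $|x|\sim 2^{-j}$ to unit size, so each $\partial_x$ costs a factor $|x|^{-1}$.

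The main obstacle is the resonance bookkeeping. I must show that no logarithm appears below degree $25$, and that the first one occurs exactly at degree $25$, matching eigenvalue $k^2-1$ for the appropriate $k$ fed by nonlinear products. The $\varepsilon$-loss in the remainder estimate absorbs the polynomial-in-$\tau$ factors at higher order. I would first test this by computing which products of decaying modes can hit the eigenfunction rates, using the symmetry structure of $N$.
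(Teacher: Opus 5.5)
Your architecture (graph over the circle, iterated Duhamel with secular $\tau$-factors at resonances, radial inversion, scaling for the remainder) is the same as the paper's. However, the proposal leaves out the step that actually carries the theorem, and the spectral data you commit to are wrong in a way that matters.

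For $\tilde M_\tau=e^{\tau/2}M_{T-e^{-\tau}}$ the linearization is $\tfrac12\partial_\theta^2+1$, so the decay rates are $\lambda_k=\tfrac{k^2-2}{2}$; the $k=2$ rate is $1$, not $3$. No rescaling of $\tau$ turns $\{1-k^2/2\}$ into $\{1-k^2\}$, so this is not a matter of normalization. A rate $m/2$ contributes $s^{m+1}$ to $r$, hence degree $m+2$ to $T-U=s^2$. The true spectrum therefore gives the first correction $P_4$, and a logarithm from the resonance at $\lambda_k$ can only sit at degree $k^2$, i.e. at $9,16,25,\dots$. With your rates the first correction would be at degree $8$, resonant logarithms could only occur at the even degrees $2k^2$, and a single $k=3$ mode would produce the non-polynomial term $|x|^{18}\cos 3\theta$. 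So degree $25$ cannot come out of your setup.

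The genuine gap is the resonance and polynomiality analysis, which you defer. To prove the statement you must show:
\begin{itemize}
\item the forcing has no $E_3$-component at $\lambda_3=7/2$ and no $E_4$-component at $\lambda_4=7$;
\item at $\lambda_5=23/2$ only a single power of $\tau$ appears, with coefficient in $E_5$, so that $P^{\log}_{25}=|x|^{20}\cdot(\text{harmonic quintic})$ is a polynomial;
\item the level $\mu=12$ is $\tau$-free;
\item every degree-$d$ coefficient is a polynomial.
\end{itemize}
Your first suggestion for the last point, deducing it from smoothness of $U$, is circular. At the critical point one only knows $C^{1,1}$ (Colding--Minicozzi) or $C^3$ (Kohn--Serfaty), and that is compatible with a term like $|x|^4\cos6\theta$.

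The paper handles all of this with one frequency bookkeeping. Because the coefficients of $\mathcal N$ do not depend on $\theta$, Fourier frequencies add under products. By induction, $F_{m/2}$ has frequencies in $\mathcal I(m)=\{\sum_j\epsilon_jk_j:\ k_j\ge2,\ \sum_j(k_j^2-2)=m\}$, and the forcing $[\mathcal N_{\le m}(v_{m-1})]_{m/2}$ has frequencies in $\mathcal I'(m)$, the same set with at least two factors.

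From $k\le k^2-2$ and $k\equiv k^2-2\pmod 2$ one gets $|l|\le m$ and $l\equiv m\pmod 2$ for every $l\in\mathcal I(m)$. The degree-$d$ coefficient of $s^2$ is a combination of products $\prod_k f_{m_k/2,0}$ with $\sum_k m_k=d-2$, so each $|x|^d q_d(\omega)$ is a polynomial.

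Moreover $\mathcal I'(7)=\emptyset$ and $\mathcal I'(14)=\{0,\pm2,\pm6,\pm10,\pm14\}\not\ni\pm4$. Hence the first possible secular term is $\tau e^{-23\tau/2}f_{23/2,1}$ with $f_{23/2,1}\in E_5$. Also, $F_{23/2}$ cannot enter the forcing at $\mu=12$, because it would need a partner of rate $1/2$.

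Your parity remark points in this direction, but it only works with the frequency bound $|l|\le m$ and the correct spectrum. Finally, the statement does not require the logarithm to be nonzero; nonvanishing comes from the companion paper.
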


We have derivative estimates for the remainder term, which is essential for the assertion about $D^{24}U$. If $P_{25}^{\log}\equiv 0$, it also gives $U\in C^{26,\alpha}$ for every $0<\alpha<1$. If $P_{25}^{\log} \not\equiv 0$, twenty-four derivatives of the logarithmic term behave like $r\log r$. This gives precisely the modulus in \eqref{e:intro modulus} and precludes a Lipschitz twenty-fourth derivative. The examples whose $P_{25}^{\log} \not\equiv 0$ are constructed in our companion paper \cite[Theorem 1.3]{huangzhu}, which requires analysis of the regularity obstruction and prescribing higher-order asymptotic expansions.

We explain where the high-order expansion comes from. By the convergence theorems of Gage-Hamilton and Huisken \cite{gh,huisken},  the rescaled convex flow converges to a round sphere $\mathbb{S}^n(\sqrt{2n})$ and the flow can be written as a graph function $v$ over $\mathbb{S}^n(\sqrt{2n})$ satisfying some parabolic equation on $\mathbb{S}^n(\sqrt{2n})$. We expand this graph to any prescribed finite order. 

\begin{theorem}[Asymptotics of the rescaled flow]\label{t:intro graph}
Let $n\ge 1$ and let $v(\cdot, \tau)$ be the graph function of a rescaled mean curvature flow in $\RR^{n+1}$ over $\mathbb{S}^n(\sqrt{2n})$ with $v(\cdot, \tau) \rightarrow 0$ in $C^{\infty}$ as $\tau \rightarrow \infty$. Set
$$
\lambda_k= \frac{k(k+n-1)}{2n}- 1 \quad (k\ge 2), \qquad \mathscr S=\{\lambda_{k_1}+ \cdots +\lambda_{k_q}: q\ge 1,\ k_i\ge 2\}.
$$

There is a unique family $\{F_\mu\}_{\mu\in\mathscr S}$, each $F_\mu(\tau)$ a polynomial in $\tau$ with smooth coefficients on $\mathbb{S}^n(\sqrt{2n})$, such that for all integers $m,\ell,a \ge 0 $, $0<\varepsilon<1$ and sufficiently large $\tau$
\begin{equation}\label{e:intro graph expansion}
\Big\|\partial_{\tau}^a \big(  v-\sum_{\substack{\mu \in\mathscr S\\ \mu \le \frac{m}{2n}}} e^{-\mu\tau}F_\mu(\tau) \big)  \Big\|_{C^\ell(\Sigma)} \le C_{m,\ell,a,\varepsilon}e^{-\frac{(m+1-\varepsilon)\tau}{2n}}.
\end{equation}
\end{theorem}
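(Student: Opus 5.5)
The plan is to treat the graph equation for $v$ as a semilinear parabolic perturbation of the linearized operator on $\Sigma=\mathbb{S}^n(\sqrt{2n})$ and iterate a spectral expansion. Write the rescaled flow equation as
\[
\partial_\tau v = L v + Q(v),\qquad L=\Delta_\Sigma + \tfrac{1}{2n}\cdot(\text{const}) \ldots,
\]
with $L$ the Jacobi-type operator. Its eigenvalues on spherical harmonics of degree $k$ are $-\lambda_k=1-\tfrac{k(k+n-1)}{2n}$. Here $Q$ is smooth, at least quadratic in $(v,\nabla v,\nabla^2 v)$ and analytic in these arguments. The modes $k=0,1$, with $\lambda_0=-1$ and $\lambda_1=-\tfrac12$, are the unstable modes, corresponding to time and space translations. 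Since $v\to 0$, these modes are slaved to the nonlinearity: integrating their ODEs from $\tau=\infty$ shows they are controlled by the quadratic terms. This is why $\mathscr S$ only involves $k_i\ge 2$.

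\textbf{Step 1 (initial decay).} By smooth convergence and parabolic estimates, every $C^\ell$ norm of $v$ tends to $0$. Projecting onto the $k\ge 2$ modes, where $L\le -\lambda_2=-\tfrac1n$, and using $\|Q(v)\|\le o(1)\|v\|_{C^2}$ in an energy/Duhamel argument gives $\|v\|_{C^\ell}\le C e^{-(\lambda_2-\delta)\tau}$. Refining with the slaved low modes then gives the sharp rate $e^{-\lambda_2\tau}$, up to a polynomial factor if needed. Interior parabolic Schauder estimates on unit time intervals upgrade all such bounds to $C^\ell$ and to $\partial_\tau^a$ derivatives. This is the mechanism that produces the derivative bounds in \eqref{e:intro graph expansion}.

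\textbf{Step 2 (induction on $m$).} Suppose the expansion holds with remainder $w_m=O(e^{-(m+1-\varepsilon)\tau/2n})$. Substitute $v=\sum_{\mu\le m/2n}e^{-\mu\tau}F_\mu+w_m$ into $Q$ and Taylor expand. Products of terms $e^{-\mu_i\tau}F_{\mu_i}$ produce exponents $\mu_1+\dots+\mu_q\in\mathscr S$. All terms involving $w_m$ are $O(e^{-(\lambda_2+(m+1-\varepsilon)/2n)\tau})$, which is beyond the next level. Note that all elements of $\mathscr S$ lie in $\tfrac1{2n}\mathbb Z$, which is why indexing by $m/2n$ works.

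We then solve mode by mode. For a forcing term $e^{-\mu\tau}\tau^j g$ projected to degree $k$, there are two cases:
\begin{itemize}
\item If $\mu\neq\lambda_k$, the particular solution is $e^{-\mu\tau}\times$ a polynomial of the same degree.
\item If $\mu=\lambda_k$ (resonance), the particular solution has degree $j+1$.
\end{itemize}
The homogeneous solutions $e^{-\lambda_k\tau}\phi_k$ with $\lambda_k$ equal to the new level, and $k\ge2$, supply the free data. They are determined by taking limits $e^{\lambda_k\tau}\langle w,\phi_k\rangle$ as $\tau\to\infty$. This limit exists because the integrand decays strictly faster, after subtracting the resonant polynomial parts. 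Modes with $\lambda_k>(m+1)/2n$ are bounded by Duhamel from $\tau$ to $\infty$ or from $\tau_0$ to $\tau$, with the standard splitting of the spectrum at the threshold. The new remainder is $O(\tau^N e^{-(m+2)\tau/2n})$, hence $O(e^{-(m+2-\varepsilon)\tau/2n})$.

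\textbf{Uniqueness.} Two families that both satisfy the estimates for all $m$ would give a finite sum $\sum e^{-\mu\tau}(F_\mu-F'_\mu)$ decaying faster than any exponential. Since the functions $\tau^j e^{-\mu\tau}$ are linearly independent, all differences vanish.

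\textbf{Main obstacle.} The hardest step is controlling infinitely many spectral modes uniformly with derivative loss. $Q$ depends on $\nabla^2 v$, so the Duhamel estimates must be done in parabolic Hölder or Sobolev spaces rather than mode by mode. I would first prove $L^2$ decay of the remainder via energy estimates on the high-mode projection, which is a finite-codimension spectral gap argument. I would then bootstrap to $C^\ell$ and $\partial_\tau^a$ using local parabolic regularity on $[\tau-1,\tau+1]$. The $\varepsilon$ loss absorbs the polynomial factors coming from the resonances.
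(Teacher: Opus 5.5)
Your proposal is correct, and its core is the paper's. You induct on $m$, Taylor-expand $\mathcal N$ about the partial sum so that the forcing is a finite combination of $\tau^j e^{-\mu\tau}$ with $\mu\in\mathscr S$ plus a faster remainder. You solve the linear problem with a polynomial-in-$\tau$ ansatz whose degree rises by one at resonances $\mu=\lambda_k$. Modes below the spectral cut are integrated from $\tau=\infty$, which kills $E_0\oplus E_1$ and fixes the free $E_k$ data. Modes above the cut are controlled by an $H^\ell$ energy estimate. Uniqueness comes from linear independence of the $\tau^j e^{-\mu\tau}$. This is Lemma~\ref{Lem;linear;1} together with Theorem~\ref{Thm;asymptotic;2}. (Your displayed formula for $L$ is garbled; it is $L=\Delta_\Sigma+1$, consistent with the eigenvalues you list.)

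You differ from the paper in two peripheral places.

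\textbf{Starting decay.} The paper quotes Strehlke's theorem, $\|v-e^{-\lambda_k\tau}H_k\|_{H^\ell}=O(e^{-\lambda_{k+1}\tau}+e^{-2\sigma\tau})$ with $k\ge2$, which gives the rate $O(e^{-\tau/n})$ and the $m=2$ case at once. You instead derive $\|v\|\le C_\delta e^{-(\lambda_2-\delta)\tau}$ for every $\delta>0$ from the spectral gap on $\bigoplus_{k\ge2}E_k$ and slaving of $E_0\oplus E_1$.
\begin{itemize}
\item This makes the argument self-contained.
\item That weaker rate is exactly the $m=1$ case and suffices to start the induction, so your ``refinement to the sharp rate'' is unnecessary.
\item This is the only place where the $\nabla^2 v$-dependence of $\mathcal N$ really costs something. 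You must close it with an estimate like $\|v(\tau)\|_{C^2}\le C\sup_{[\tau-1,\tau]}\|v\|_{L^2}$, from linear parabolic theory with $v$-dependent coefficients, plus a time-delayed version of the ODE comparison.
\end{itemize}
In the inductive step there is no derivative loss at all. The hypothesis controls the remainder in every $C^\ell$, so $\mathcal N(v)$ is simply a known forcing decaying in every $H^\ell$, which is how the paper's linear lemma treats it. The obstacle you single out is therefore confined to Step~1.

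\textbf{Time derivatives.} Your parabolic Schauder bootstrap for $\partial_\tau^a$ works once the remainder equation is written as a linear equation with smooth bounded coefficients. The paper instead differentiates the remainder equation in $\tau$, using $[L,\partial_\tau]=0$ and the time-derivative forms of the nonlinear bounds (Lemmas~\ref{Lem;N-diff;1} and~\ref{Lem;N-tail;1}), and inducts on $a$. That route is more elementary and needs no H\"older-in-time theory.
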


One could consult Theorem \ref{Thm;asymptotic;2} for more information about the functions $F_{\mu}$. The distinction between the derivative estimates \eqref{e:intro graph expansion} and a function-level asymptotic formula matters. Earlier leading expansions identify possible behavior of $U$ near its critical point, but a remainder that is merely $o(|x|^d)$ cannot be differentiated to obtain regularity of $D^dU$ (see for example \cite{se,str}). We prove the derivative estimates in \eqref{e:intro graph expansion} and transfer them to spatial derivatives after inverting the radial relation between $v$ and $U$. The inversion is finite at each prescribed order, so the same method also computes subsequent arrival-time terms, with powers and possible logarithms.

Using those derivative estimates, we also prove the sharp regularity for convex arrival time functions in higher dimensions: all such arrival time functions are $C^{2,2/n}$. We note that the exponent was identified in the convergence analysis of \v{S}e\v{s}um \cite{se} and the leading order expansion was obtained there. Combining our derivative estimates of the remainder gives the sharp H\"older regularity. 

\begin{theorem}[Higher-dimensional regularity]\label{t:intro higher}
    Let $U$ be a solution to \eqref{e:intro level set} in some convex $\Omega \subset \RR^{n+1}$ for $n\ge 2$.
    Then $U$ is $C^{2,2/n}$.
\end{theorem}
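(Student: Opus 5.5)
We use Theorem \ref{t:intro graph} with $m$ small to get the leading term of $v$ together with a remainder that is controlled after differentiation, and then transfer this to $U$ by inverting the radial relation.

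\textbf{Step 1: regularity away from the extinction point.} By Huisken's convergence theorem the flow shrinks to a point $x_0$, which we translate to $0$, at time $T$. At every point other than the origin we have $DU\neq 0$, and the level sets $M_t$ are smooth, so $U$ is smooth there. The whole question is therefore the behaviour at the origin.

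\textbf{Step 2: the expansion of $v$.} Write the rescaled flow as $\sqrt{T-t}\,\big(\sqrt{2n}+v(\theta,\tau)\big)\theta$ with $\tau=-\log(T-t)$. The smallest element of $\mathscr S$ is $\lambda_2=\frac{n+1}{n}-1=\frac1n$, and the next one is $\min(\lambda_3,2\lambda_2)$, which exceeds $\frac1n$. Apply \eqref{e:intro graph expansion} with $m=2$, so that the sum runs over $\mu\le \frac1n$. This gives
\[
v=e^{-\tau/n}F_{1/n}(\tau)+O_{C^\ell,\partial_\tau^a}\big(e^{-(3-\varepsilon)\tau/(2n)}\big).
\]
We also need $F_{1/n}$ to be a degree-$2$ spherical harmonic, constant in $\tau$. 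This follows from the linearization $\partial_\tau v=Lv+O(v^2)$, since there is no resonance at the first eigenvalue; it should be contained in Theorem \ref{Thm;asymptotic;2}.

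\textbf{Step 3: inverting for $U$.} The point $x=r\theta$ is reached at time $t=U(x)$, where
\[
r=\sqrt{T-U}\,\big(\sqrt{2n}+v(\theta,-\log(T-U))\big).
\]
Set $s=T-U$. Solving gives $s=\frac{r^2}{2n}\big(1+\phi\big)$. The correction $\phi$ has a leading part of the form $c\,e^{-\tau/n}Y_2(\theta)\sim r^{2/n}Y_2(\theta)$, coming from $e^{-\tau/n}=s^{1/n}$, with a remainder that is $O(r^{(3-\varepsilon)/n})$. We then differentiate the implicit relation. Each $\partial_r$ acting on a function of $(\theta,\tau)$ with $\tau\approx-2\log r+$const produces a factor $r^{-1}$ times $\partial_\tau$, and each angular derivative produces a factor $r^{-1}$. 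Using the $\partial_\tau^a$ and $C^\ell$ remainder bounds, we get, for $|\beta|\le 3$,
\[
D^\beta\Big(T-U-\frac{r^2}{2n}-c\,r^{2+2/n}Y_2(\theta)\Big)=O\big(r^{2+(3-\varepsilon)/n-|\beta|}\big).
\]
The term $r^{2+2/n}Y_2(\theta)$ is homogeneous of degree $2+\frac2n$ and smooth away from $0$. Therefore its second derivatives are homogeneous of degree $\frac2n$, and these are $C^{0,2/n}$: near a pair $x,y$ one uses the gradient bound $|x|^{2/n-1}$ when $|x-y|<|x|/2$, and the direct bound otherwise. The remainder's second derivatives have size $O(r^{(3-\varepsilon)/n})$ and third derivatives of size $O(r^{(3-\varepsilon)/n-1})$. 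By the same interpolation they are $C^{0,2/n}$, since $\frac{3-\varepsilon}{n}>\frac2n$. The quadratic part is smooth. Together these give $U\in C^{2,2/n}$.

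\textbf{Main obstacle.} The delicate part is Step 3: carrying out the implicit inversion with derivative control uniform in $r$. The variable $\tau$ depends on $U$ itself, so we differentiate the fixed-point relation and bound the resulting terms inductively using the bounds on $\partial_\tau^a$ and on angular derivatives of $v$. A secondary point is the case $n=2$, where $2+\frac2n=3$. There one must check that no log term arises at this order, i.e.\ that $F_{1/n}$ has no $\tau$-dependence. This holds because $2\lambda_2\ne\lambda_2$, so the first mode is non-resonant.
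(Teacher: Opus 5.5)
Your proposal is correct and follows essentially the same route as the paper. You take the expansion with $m=2$ and time-derivative control; there $F_{1/n}\in E_2$ is $\tau$-independent because the nonlinear forcing at rate $\lambda_2$ vanishes. You then invert the radial relation with derivative bounds, as in the paper's inverse-function lemma, to get $T-U=\frac{|x|^2}{2n}+c\,|x|^{2+2/n}Y_2+\cR$ with $|D^\beta\cR|\le C|x|^{2+(3-\varepsilon)/n-|\beta|}$, and read off $C^{2,2/n}$. Your explicit interpolation for the H\"older seminorm fills in a step the paper leaves as ``in particular,'' though the $\tau$-independence of $F_{1/n}$ is needed for every $n\ge 2$ (a $\log|x|$ factor would break $C^{2,2/n}$ in all dimensions), not specially for $n=2$.
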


In view of the examples constructed by \v{S}e\v{s}um \cite{se}, this regularity is optimal (see also \cite{se, huangzhu}).

\subsection*{Disclosure of AI tools} The authors use ChatGPT 5.6 Pro for assistance in identifying mathematical and linguistic typos. 

\subsection*{Acknowledgements}
The authors are grateful to Prof. Toby Colding for his continual support and encouragement. The authors would like to thank Prof. Natasa \v{S}e\v{s}um, Prof. Connor Mooney and Zhihan Wang for some helpful conversations. Yiqi Huang is supported by a Simons Dissertation Fellowship.

\section{Preliminaries and setup}

\subsection{Linear operator $L$}
Let $\Sigma = \mathbb{S}^n(\sqrt{2n})$ and let $L = \Delta_{\Sigma} + 1$ be the linearized shrinker operator on the spherical shrinker $\Sigma$. Then we let $\lambda_k$ be the $k$-th eigenvalue of $-L$. We have the explicit formula 
		\begin{align}\label{expression;lambda-k;1}
			\lambda_k = \frac{k(k+n-1)}{2n}-1
		\end{align}

Define $E_k$ to be the eigenspace of $-L$ with eigenvalue $\lambda_k$.

\begin{definition}
	 We first define projections and Sobolev norms  
	 \begin{itemize}
	\item Define the $L^2$ inner product as:
	\begin{align*}
		\langle u,v\rangle_{L^2} := \int_{\Sigma} uv 
	\end{align*}
	\item  Let $\{\phi_{i,j}\}_{1\leq j \leq d_i}$ be an $L^2$ orthonormal basis of $E_i$. Therefore $\{\phi_{i,j}\}_{i\geq 0, 1\leq j \leq d_i}$ is an orthonormal basis of $L^2(\Sigma)$. Then for any $v\in L^2(\Sigma)$ one can decompose $ v = \sum_{i\geq 0, 1\leq j \leq d_i} a_{i,j}\phi_{i,j}$. Clearly $||v||_{L^2}^2 = \sum_{i\geq 0, 1\leq j \leq d_i}a_{i,j}^2$. We define
		\begin{align*}
			||v||_{H^r}^2 = \sum_{\substack{i\geq 0\\ 1\leq j\leq d_i}} a_{i,j}^2 |\lambda_i|^{r} 
		\end{align*}
		Since $\ker(L)=\{0\}$  on $\Sigma$, the above definition is equivalent to the usual $H^r$ norm. Moreover,
		\begin{align*}
			||v||_{H^r}^2 = \langle(-L)^r v, v\rangle
		\end{align*}
		when $v \perp E_0 \oplus E_1 $ 
	\item Define $\Pi_{\lambda_k}$ to be the $L^2$ orthogonal projection onto $E_k$ and define $\Pi_{\mu} = 0$ if $\mu \neq \lambda_k$ for any $k\geq0$.  For all $\mu \geq 0$, define $\Pi_{\geq \mu}$ to be the $L^2$ orthogonal projection onto $\displaystyle \bigoplus_{\lambda_k \geq \mu}E_k$. Define similarly for $\Pi_{>\mu}$, $\Pi_{\leq\mu}$, $\Pi_{<\mu}$.  
\end{itemize}
\end{definition}

\begin{proposition}\label{Prop;L-invertible}
For any $\mu\in\RR$ and $\ell\geq2$, the following operator is invertible:
\begin{equation*}
    L+\mu : H^\ell \cap(1-\Pi_\mu)L^2
  \longrightarrow H^{\ell-2} \cap(1-\Pi_\mu) L^2.
\end{equation*}
Moreover, $ \|(L+\mu)^{-1}f \|_{H^\ell} \leq C_{\ell,\mu} \| f \|_{H^{\ell-2}}$.\end{proposition}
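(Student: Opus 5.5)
The plan is to work entirely in the spectral decomposition of $L^2(\Sigma)$ from the definition above. On each eigenspace $E_k$ the operator $L+\mu$ acts as multiplication by $\mu-\lambda_k$. Given $f\in H^{\ell-2}\cap(1-\Pi_\mu)L^2$, write $f=\sum a_{i,j}\phi_{i,j}$. The condition $\Pi_\mu f=0$ means $a_{i,j}=0$ whenever $\lambda_i=\mu$, and if $\mu$ is not an eigenvalue it is vacuous. We then define the candidate inverse
\[
u=\sum_{\lambda_i\neq\mu} \frac{a_{i,j}}{\mu-\lambda_i}\,\phi_{i,j}.
\]
Since $\lambda_i\to\infty$ like $i^2/(2n)$, only finitely many $i$ have $\lambda_i$ near $\mu$. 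Hence
\[
\delta_\mu:=\inf_{\lambda_i\neq\mu}|\mu-\lambda_i|>0 .
\]
For large $i$ we have $|\mu-\lambda_i|\ge \tfrac12|\lambda_i|$.

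The key estimate compares weights. Taking the definition literally, $\|u\|_{H^\ell}^2=\sum a_{i,j}^2|\lambda_i|^\ell/|\mu-\lambda_i|^2$, and we want this bounded by $C\sum a_{i,j}^2|\lambda_i|^{\ell-2}$. This amounts to
\[
\frac{|\lambda_i|^2}{|\mu-\lambda_i|^2}\le C_{\mu}
\]
uniformly over $i$ with $\lambda_i\neq\mu$. For the finitely many small $i$ this holds because $|\mu-\lambda_i|\ge\delta_\mu$. For large $i$ it holds because $|\mu-\lambda_i|\ge\tfrac12|\lambda_i|$.

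One caveat concerns the norm itself. The weights $|\lambda_i|^r$ vanish when some $\lambda_i=0$ (e.g.\ $n=1$, $k=2$ gives $\lambda_2=0$) and are negative powers issues for $\lambda_0=-1,\lambda_1$. So I will interpret $\|\cdot\|_{H^r}$ through weights $(1+|\lambda_i|)^r$, which is the usual equivalent Sobolev norm the definition intends. With that weight the ratio $(1+|\lambda_i|)^2/|\mu-\lambda_i|^2$ is again bounded by the same two-case argument.

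With the estimate in hand, the remaining steps are routine. The series for $u$ converges in $H^\ell$, $u$ lies in $(1-\Pi_\mu)L^2$ (no component in $E_k$ with $\lambda_k=\mu$), and termwise $(L+\mu)u=f$. Injectivity holds because $(L+\mu)u=0$ forces every coefficient with $\lambda_i\neq\mu$ to vanish, and the remaining coefficients vanish by the range condition. Thus the map is bijective with the stated bound, $C_{\ell,\mu}$ depending on $\delta_\mu$ and $\ell$.

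The only subtle point is the norm convention just discussed, together with identifying the spectral $H^\ell$ norm with the usual Sobolev norm. I will take that identification as given by elliptic regularity for $\Delta_\Sigma$, as the paper asserts.
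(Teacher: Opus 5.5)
Your argument is correct and is the same as the paper's: diagonalize $L+\mu$ on the eigenspaces $E_k$, invert on $(1-\Pi_\mu)L^2$ by dividing by $\mu-\lambda_k$, and use that $\sup_{\lambda_k\neq\mu}|\lambda_k|/|\mu-\lambda_k|<\infty$. Your norm caveat is unnecessary, because no eigenvalue vanishes: $\lambda_2=1/n\neq 0$ (so $\lambda_2=1$ when $n=1$), and in general $k(k+n-1)=2n$ has no solution for $n\ge 1$. Hence $\ker L=\{0\}$, as the paper notes, and the weights $|\lambda_i|^r$ and $(1+|\lambda_i|)^r$ give equivalent norms.
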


\begin{proof}
On $E_k$, $L+\mu$ acts by multiplication by $\mu-\lambda_k$. Hence, on the orthogonal complement of its kernel,
\begin{equation*}
    (L+\mu)^{-1}f = \sum_{\lambda_k\neq \mu} \frac{\Pi_{\lambda_k} f}{\mu-\lambda_k}.
\end{equation*}

The estimate follows from the definition of the Sobolev norms and
$\sup_{\lambda_k \neq \mu}|\lambda_k| / |\mu-\lambda_k|< \infty$.
\end{proof}

We will now define several sets:
\begin{definition}
	$\mathscr{S}= \{\lambda_{l_1} + \cdots + \lambda_{l_m}: m\geq 1, l_i \geq 2\}$. 
	In view of \eqref{expression;lambda-k;1},  $\mathscr{S}\subset \frac{1}{2n}\mathbb{Z}$.
\end{definition}

\begin{definition}\label{Def;function-class;1}
	Let $\mathscr{F}$ be the collection of time dependent functions on $\Sigma$ in the form
\begin{align}
	F = F(\tau) = F(\tau, x) = \sum_{j=0}^{d}\tau^{j}f_j(x),\qquad f_0,\cdots, f_d \in C^{\infty}(\Sigma), \quad f_d\neq0 
\end{align}
The degree of such $F$ is defined to be $d$. If $F \equiv 0$, then we define its degree to be $-\infty$. 

Define $\mathscr{E}$ to be the collection of functions on $\Sigma$ in the form:
\begin{align}
	G = \sum_{\mu\in \mathscr{S}} e^{-\mu \tau}G_{\mu}(\tau), \qquad G_{\mu}(\tau) \in \mathscr{F}
\end{align}
where $G_{\mu}\not\equiv 0$ only for \textbf{finitely many} $\mu$ and $G_{\mu} \in \mathscr{F}$ for all $\mu\in\mathscr{S}$. 

For each $G \in \mathscr{E}$ written in the above form, define  $[G]_{\mu} = G_{\mu}$ if $\mu \in  \mathscr{S}$ and define $[G]_{\mu} = 0$ otherwise. 
\end{definition}

We record some basic facts about $\mathscr{E}$ and $\mathscr{F}$:
\begin{proposition}\label{Prop;E-F-derivative;2}  
\begin{itemize}
	\item $\mathscr{E}$ and $\mathscr{F}$ are preserved under any time derivatives. They are also preserved by addition, multiplication, spatial derivatives and their compositions that preserve $C^{\infty}(\Sigma)$. (e.g., the operator $L$, the contraction $|\nabla \cdot|^2$ . )
	\item Any element in $\mathscr{E}$ has a unique representation. Namely, if an element in $\mathscr{E}$ can be written in two forms:
 	\begin{align}
		\sum_{\mu\in \mathscr{S} } e^{-\mu \tau}G_{\mu}(\tau) = \sum_{\mu\in \mathscr{S} } e^{-\mu \tau}G'_{\mu}(\tau)
	\end{align} 
	  then $G_{\mu} = G_{\mu}'$ for all $ \mu $. 
\end{itemize}
\end{proposition}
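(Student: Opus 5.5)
The first bullet is a closure statement, which I will handle by computing the operations on the defining forms. For $F=\sum_{j\le d}\tau^j f_j\in\mathscr F$, the time derivative $\partial_\tau F=\sum_j j\tau^{j-1}f_j$ again has smooth coefficients, so $\partial_\tau F\in\mathscr F$. Sums and products of polynomials in $\tau$ with $C^\infty(\Sigma)$ coefficients are again of this type, because $C^\infty(\Sigma)$ is an algebra. Any spatial differential operator with smooth coefficients, such as $L$ or $|\nabla\cdot|^2$, maps $C^\infty(\Sigma)$ to itself and acts coefficientwise in $\tau$ (multilinearly in the case of contractions), so the result stays in $\mathscr F$.

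For $G=\sum_\mu e^{-\mu\tau}G_\mu\in\mathscr E$, I will use
\[
\partial_\tau\big(e^{-\mu\tau}G_\mu\big)=e^{-\mu\tau}\big(\partial_\tau G_\mu-\mu G_\mu\big),
\]
which keeps the same exponent $\mu$. Products produce the terms $e^{-(\mu+\nu)\tau}G_\mu H_\nu$. The one point needing care is that $\mathscr S$ is closed under addition, which holds because sums of finite sums of $\lambda_{k_i}$ are again such finite sums. Finiteness of the set of exponents is also preserved. Spatial operators act on each $G_\mu$ separately, and compositions follow by iterating these steps.

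The second bullet is the only step with real content. Taking differences, it suffices to show that if $\sum_{\mu\in A}e^{-\mu\tau}H_\mu(\tau)\equiv0$ for a finite set $A$ and $H_\mu\in\mathscr F$, then every $H_\mu\equiv0$. I will argue by contradiction. Let $\mu_0$ be the smallest $\mu$ with $H_\mu\not\equiv0$, and fix a point $x\in\Sigma$ with $f_{d}(x)\neq0$, where $f_d$ is the top coefficient of $H_{\mu_0}$. Multiplying the identity by $e^{\mu_0\tau}\tau^{-d}$ gives
\[
0=\tau^{-d}H_{\mu_0}(\tau,x)+\sum_{\mu>\mu_0}e^{-(\mu-\mu_0)\tau}\tau^{-d}H_\mu(\tau,x).
\]
As $\tau\to\infty$, the first term tends to $f_d(x)\neq0$. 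Each remaining term tends to $0$, since exponential decay beats polynomial growth and there are finitely many terms. This is a contradiction.

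The main care needed is applying the argument pointwise in $x$, so that the functions become scalar exp-polynomials. With that done, no step is really an obstacle; the argument amounts to linear independence of the functions $\tau^j e^{-\mu\tau}$.
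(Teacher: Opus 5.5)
Your proposal is correct and follows the same route as the paper. Closure comes from direct computation together with additivity of $\mathscr{S}$, and uniqueness comes from comparing asymptotics as $\tau\to\infty$, which you make precise pointwise in $x$ via the smallest exponent $\mu_0$ and the top power of $\tau$. The paper's one-line proof is just a condensed version of the same argument.
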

\begin{proof}
	The first statement follows from direct computation. Multiplication keeps the exponential rate $\mu$ in $\mathscr{S}$, since $\mathscr{S}$ is closed under addition. The second statement follows from comparing asymptotics.
\end{proof}

\begin{proposition}\label{Prop;unique-linear}
Consider the homogeneous equation
\begin{align}
	\partial_{\tau} F - (L+\mu)F = 0
\end{align}
If $\mu \neq \lambda_k$ for any $k\geq 0$, then the only solution in $\mathscr{F}$ is 0. 
If $\mu = \lambda_k$ for some $k\geq 0$, then all the solutions in $\mathscr{F}$ are precisely $E_k$.
\end{proposition}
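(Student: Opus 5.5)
Since $F$ is a polynomial in $\tau$ with smooth coefficients, I will write $F=\sum_{j=0}^d \tau^j f_j$ with $f_d\neq 0$, substitute into the equation, and compare the coefficients of each power $\tau^j$. This is legitimate because a polynomial in $\tau$ with values in $C^\infty(\Sigma)$ vanishes identically only if each coefficient vanishes: evaluate at $d+1$ distinct times and invert the Vandermonde matrix. Using $\partial_\tau F=\sum_{j\ge1} j\tau^{j-1}f_j$, the equation becomes the triangular system
\begin{equation*}
(L+\mu)f_d = 0,\qquad (L+\mu)f_{j} = (j+1)f_{j+1}\quad (0\le j\le d-1).
\end{equation*}

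First I treat the top coefficient. Decompose $f_d$ in the eigenbasis: $L+\mu$ acts on $E_k$ by multiplication by $\mu-\lambda_k$. So $(L+\mu)f_d=0$ forces $\Pi_{\lambda_k}f_d=0$ for all $\lambda_k\ne\mu$, and hence $f_d\in E_k$ if $\mu=\lambda_k$. If $\mu$ is not an eigenvalue, then $f_d=0$, contradicting $f_d\ne 0$. Hence $F\equiv0$ in that case, which is the first claim.

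Next, when $\mu=\lambda_k$, I show that $d=0$. Suppose $d\ge1$. The equation for $j=d-1$ reads $(L+\mu)f_{d-1}=d f_d$. Take the $L^2$ inner product with $f_d$. Since $L$ is self-adjoint and $(L+\mu)f_d=0$,
\begin{equation*}
d\|f_d\|_{L^2}^2=\langle (L+\mu)f_{d-1},f_d\rangle=\langle f_{d-1},(L+\mu)f_d\rangle=0.
\end{equation*}
Therefore $f_d=0$, a contradiction. Equivalently, $f_d\in E_k$ lies in the kernel of $L+\mu$ and therefore is not in its range, by Proposition \ref{Prop;L-invertible}. So $d=0$ and $F=f_0\in E_k$. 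Conversely, any $f\in E_k$, viewed as a $\tau$-independent function, satisfies $\partial_\tau f-(L+\lambda_k)f=0$. Together with $F\equiv 0$ (degree $-\infty$), this shows the solution set is exactly $E_k$.

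The only point requiring care is the coefficient-comparison step: identifying a polynomial in $\tau$ that vanishes identically with the vanishing of all its coefficients, and making sure $L$ is applied coefficientwise. Both are elementary because everything is smooth on the compact manifold $\Sigma$. The genuinely essential ingredient is the self-adjointness argument, which rules out secular terms $\tau^j$ when $\mu$ is resonant.
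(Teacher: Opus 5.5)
Your proof is correct and is essentially the paper's argument. The paper obtains the same two relations, $(L+\mu)f_d=0$ and $(L+\lambda_k)f_{d-1}=d f_d$, by applying $\partial_\tau^{d}$ and $\partial_\tau^{d-1}$ to the equation instead of comparing coefficients, and then rules out $d\ge 1$ because $E_k$ is the cokernel of $L+\lambda_k$, which is exactly your self-adjointness pairing; your explicit check that every element of $E_k$ is a solution fills in the converse direction, which the paper leaves implicit.
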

\begin{proof}

\noindent\textbf{Case 1:}  $\mu \neq \lambda_k$ for any $k\geq 0$. 

Suppose that there is a nonzero solution $F$, we aim to derive a contradiction. By definition we can write $F = \sum_{j=0}^{d}\tau^j f_j(x)$, where $d\geq 0$, $f_0,\cdots, f_d\in C^{\infty}$ and $f_d \not\equiv 0$.  We compute: 
\begin{align}\label{Eq;F;1}
	0= \partial_{\tau}^d(\partial_{\tau} F - (L+\mu)F) = \partial_{\tau}^{d+1}F - (L+\mu)\partial_{\tau}^d F = -d! (L+\mu)f_d
\end{align}
Since $f_d \neq 0$, this forces $\mu$ to be an eigenvalue $\lambda_k$ for some $k\geq 0$, a contradiction.

\noindent\textbf{Case 2:} $\mu =\lambda_k$ for some $k\geq 0$. We take any nonzero solution $F$ and again write $F = \sum_{j=0}^{d}\tau^j f_j(x)$, where $d\geq 0$, $f_0,\cdots, f_d\in C^{\infty}$ and $f_d \not\equiv 0$. Then \eqref{Eq;F;1} still applies and we get $f_d\in E_k$. It remains to prove that $d = 0$. If this is not the case, then we have
\begin{align}\label{Eq;F;2}
	0= \partial_{\tau}^{d-1}(\partial_{\tau} F - (L+\mu)F) = \partial_{\tau}^dF - (L+\mu)\partial_{\tau}^{d-1} F = d!f_d - (L+\lambda_k) (d-1)! f_{d-1}
\end{align}
This means $(L+\lambda_k)f_{d-1}  = df_d \in E_k $. However, this is a contradiction because $E_k$ is the cokernel of $L+\lambda_k$. 

\end{proof}

\subsection{Nonlinear error $\mathcal{N}$}

Suppose that a rescaled mean curvature flow $\bar{M}_{\tau}$ can be written as a graph over $\Sigma$ with graph function $v$. Then $v$ satisfies the evolution equation
\begin{align}\label{Eq;RMCF;1}
	\partial_{\tau} v- Lv = \mathcal{N} (v,\nabla v, \nabla^2 v)
\end{align} 	
We may abbreviate $\mathcal{N} (v,\nabla v, \nabla^2 v)$ as $\mathcal{N} (v)$ when no confusion arises. 

$\mathcal{N}$ can be viewed as function on $C^{2}(\Sigma) \otimes T\Sigma \otimes Sym(T^2\Sigma)$. Moreover $\mathcal{N}$ is quadratic, meaning that $\mathcal{N}(0,0,0) = 0$ and $D\mathcal{N}|_{(0,0,0)} = 0$. In addition, we can find $\varepsilon_1 \in (0,1)$ such that 
\begin{align}\label{Def;varepsilon-1;1}
	\mathcal{N}(z,p,X) \text{ is analytic when } |z|, |p|, |X| < 2\varepsilon_1
\end{align}
where the norm is taken w.r.t to Riemannian metric on $\Sigma$.

For all $h\in C^2$, define 
\begin{align}
	J(h) := (h,\nabla h, \nabla^2 h) \in  T(C^{2}(\Sigma) \otimes T\Sigma \otimes Sym(T^2\Sigma))
\end{align} 
We denote the differential of  $\mathcal{N}$ at $(w,\nabla w, \nabla^2 w)$ in the direction of $J(h)$ by
\begin{align}
	D\mathcal{N}|_{(w,\nabla w, \nabla^2 w)}[J(h)] = D_z \mathcal{N}  [h] + D_p \mathcal{N}  [\nabla h] + D_X \mathcal{N}  [\nabla^2h ].
\end{align} 
Higher derivatives in the directions of $J(h_1), \cdots, J(h_q)$ are denoted similarly:
\begin{align}
	&D^q\mathcal{N}\big|_{(w,\nabla w, \nabla^2 w)}[J(h_1), \cdots ,J(h_q)] \nonumber \\
	=&\sum_{\substack{a+b+c = q\\ a,b,c \geq 0}} \frac{1}{a!b!c!}\sum_{\substack{\{i_1,\cdots, i_a, \\   j_1,\cdots, j_b, \\ k_1,\cdots, k_c\} \\
=\{1,\cdots, q\}}} D_z^a D_p^b D_X^c \mathcal{N}\big|_{(w,\nabla w, \nabla^2 w)}  \Big[ h_{i_1},\cdots, h_{i_a}, \nabla h_{j_1},\cdots, \nabla h_{j_b},\nabla^2 h_{k_1},\cdots, \nabla^2 h_{k_c}\Big]
\end{align} 

In particular, if we take the base point to be $(0,0,0)$ and take all the directions to be the same, then we get the Taylor polynomial of $\mathcal{N}$: 

\begin{definition}\label{Def;N;1}
Define the $q$-th Taylor polynomial of $\mathcal{N}(v,\nabla v, \nabla^2 v)$ as
\begin{align}
	\mathcal{N}_{q}(v) := \frac{1}{q!}\frac{d^q}{d s^q}\Big|_{s=0}\mathcal{N}(sv, s\nabla v, s\nabla^2 v)
\end{align}
This is equivalent to 
\begin{align}
	\mathcal{N}_{q}(v) =& \frac{1}{q!} D^q\mathcal{N}|_{(0,0,0)}[\underbrace{J(v),\cdots, J(v)}_{q}] \nonumber \\
	 =  \sum_{a+b+c = q} &\frac{1}{a!b!c!}D_z^a D_p^b D_X^c \mathcal{N}\big|_{(0,0,0)}  \Big[\underbrace{v,\cdots,v}_{a}, \underbrace{\nabla v,\cdots, \nabla v}_{b}, \underbrace{\nabla^2 v,\cdots, \nabla^2 v}_{c}\Big]
\end{align}
Define
\begin{align}
	\mathcal{N}_{\leq m}(v) = \sum_{q=0}^{m} \mathcal{N}_q(v), \qquad  \mathcal{N}_{>m}(v) =\mathcal{N}(v, \nabla v, \nabla^2 v) - \mathcal{N}_{\leq m}(v)
\end{align}
Note that $\mathcal{N}_0 = 0 $ and $ \mathcal{N}_1 = 0$ by quadraticity of $\mathcal{N}$. 
\end{definition}

The above definition suggests the following:
\begin{lemma}\label{Lem;N-m;1}
For all $m\geq 0$, 	$\mathcal{N}_{\leq m}$ preserves $\mathscr{E}$.
\end{lemma}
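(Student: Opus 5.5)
The plan is to show that each $\mathcal{N}_q$ for fixed $q$ maps $\mathscr{E}$ into $\mathscr{E}$. Since $\mathscr{E}$ is closed under addition, and $\mathcal{N}_{\le m}=\sum_{q\le m}\mathcal{N}_q$ with $\mathcal{N}_0=\mathcal{N}_1=0$, this suffices.

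First I make the structure of $\mathcal{N}_q$ explicit. By Definition \ref{Def;N;1},
\begin{equation*}
\mathcal{N}_q(v)=\sum_{a+b+c=q}\frac{1}{a!b!c!}D_z^aD_p^bD_X^c\mathcal{N}\big|_{(0,0,0)}\big[v,\dots,v,\nabla v,\dots,\nabla v,\nabla^2 v,\dots,\nabla^2 v\big].
\end{equation*}
The derivatives of $\mathcal{N}$ at the zero section are evaluated pointwise on $\Sigma$. Each $D_z^aD_p^bD_X^c\mathcal{N}|_{(0,0,0)}$ is therefore a smooth, time-independent tensor field on $\Sigma$. This uses that $\mathcal{N}$ is analytic, hence smooth, near $0$ by \eqref{Def;varepsilon-1;1}, and that it depends smoothly on the base point $x\in\Sigma$ through the geometry of $\Sigma$. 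So $\mathcal{N}_q(v)$ is a finite sum of full contractions of smooth tensors $T_{abc}$ on $\Sigma$ with $a$ copies of $v$, $b$ copies of $\nabla v$ and $c$ copies of $\nabla^2 v$.

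Next I take $v=\sum_{\mu\in\mathscr S}e^{-\mu\tau}G_\mu(\tau)\in\mathscr E$, a finite sum. Then
\begin{equation*}
\nabla v=\sum_\mu e^{-\mu\tau}\nabla G_\mu, \qquad \nabla^2 v=\sum_\mu e^{-\mu\tau}\nabla^2 G_\mu .
\end{equation*}
Here $\nabla^k G_\mu$ is a polynomial in $\tau$ whose coefficients are smooth tensor fields. Expanding the multilinear contraction gives a finite sum of terms of the form
\begin{equation*}
e^{-(\mu_1+\cdots+\mu_q)\tau}\,T_{abc}\big[\nabla^{k_1}G_{\mu_1},\dots,\nabla^{k_q}G_{\mu_q}\big].
\end{equation*}
Each such term is a polynomial in $\tau$ whose coefficients are contractions of smooth tensors, hence lie in $C^\infty(\Sigma)$. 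Since $q\ge 2\ge1$ and $\mathscr S$ is closed under addition, the exponent $\mu_1+\cdots+\mu_q$ lies in $\mathscr S$. This is exactly the closure property invoked in Proposition \ref{Prop;E-F-derivative;2}.

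Finally I collect terms with the same exponent, and drop any that cancel to $0$ so that the degree convention of $\mathscr F$ is respected. The result is a finite sum of the form required by Definition \ref{Def;function-class;1}.

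The only point needing care is the first step: the coefficients of the Taylor polynomial are genuinely smooth, $\tau$-independent tensors on $\Sigma$. This follows because $\mathcal{N}$ is a fixed smooth function on the jet bundle, and I do not expect a real obstacle here. Everything else is bookkeeping of multilinearity and closure of $\mathscr S$ under addition.
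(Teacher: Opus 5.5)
Your proof is correct and takes the same approach as the paper. The paper also writes each $\mathcal{N}_q(G)$, $q\le m$, as a polynomial in $G,\nabla G,\nabla^2 G$ with fixed coefficients, then invokes Proposition \ref{Prop;E-F-derivative;2} (closure of $\mathscr{E}$ under sums, products and spatial derivatives, using that $\mathscr{S}$ is closed under addition); you just spell out the multilinear expansion, the smoothness of the Taylor coefficient tensors, and why $\mathcal{N}_0=0$ keeps the exponents inside $\mathscr{S}$.
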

\begin{proof}
	The result is trivial when $m=0, 1$, so we fix $m\geq 2$ and fix any element $G \in \mathscr{E}$.
Recall that for all $q\leq m$: 
\begin{align}
		\mathcal{N}_{q}(G)  =  \sum_{a+b+c = q} &\frac{1}{a!b!c!}D_z^a D_p^b D_X^c \mathcal{N}\big|_{(0,0,0)}  \Big[\underbrace{G,\cdots,G}_{a}, \underbrace{\nabla G,\cdots, \nabla G}_{b}, \underbrace{\nabla^2 G,\cdots, \nabla^2 G}_{c}\Big]
\end{align}
Therefore $\mathcal{N}_{q}(G)$ is a polynomial in $G, \nabla G$ and $\nabla^2 G$. Then the assertion follows from Proposition \ref{Prop;E-F-derivative;2}.

\end{proof}

We prove some useful facts about the functional $\mathcal{N}$:
\begin{lemma}\label{Lem;N-diff;1}
	Let $a,\ell \geq 0$.  Suppose that $v_1,v_2$ satisfies  $||v_1||_{C^{\ell+2}} \leq \varepsilon_1, ||v_2||_{C^{\ell+2}} \leq \varepsilon_1 $. In addition, suppose that there is   $B>0$ such that $||\partial_{\tau}^j v_i ||_{C^{\ell+2}} \leq B$ for all $i=1,2$, $j=0,1,...,a$ . Then there is a constant $C_{a,\ell}$ such that the difference estimate holds:
\begin{align}
	\big\Vert\partial_{\tau}^a \Big(\mathcal{N}(v_1) - \mathcal{N}(v_2)\Big) \big\Vert_{C^\ell} \leq   C_{a,\ell}\cdot (1+B)^a\sum_{\substack{j,k\geq 0 \\ j+k \leq a}} \Big(\big\Vert  \partial_{\tau}^j v_1 \big\Vert_{C^{\ell+2}} + \big\Vert  \partial_{\tau}^j v_2 \big\Vert_{C^{\ell+2}} \Big) \cdot \big\Vert \partial_{\tau}^k (v_1 - v_2) \big\Vert_{C^{\ell+2}}
\end{align}
\end{lemma}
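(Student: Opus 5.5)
We write the difference via the fundamental theorem of calculus along the segment $w_s = v_2 + s(v_1-v_2)$, $s\in[0,1]$:
\begin{align*}
	\mathcal{N}(v_1)-\mathcal{N}(v_2) = \int_0^1 D\mathcal{N}\big|_{J(w_s)}[J(v_1-v_2)]\,ds = A_0\,(v_1-v_2) + A_1\cdot\nabla(v_1-v_2) + A_2 * \nabla^2(v_1-v_2),
\end{align*}
where $A_i(x,\tau)=\int_0^1 (D_z,D_p,D_X)\mathcal{N}|_{J(w_s)}\,ds$ are tensor coefficients. Since $\|w_s\|_{C^{\ell+2}}\le \varepsilon_1$, the jet $J(w_s)$ stays in the region $|z|,|p|,|X|<2\varepsilon_1$ where $\mathcal{N}$ is analytic by \eqref{Def;varepsilon-1;1}. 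So all derivatives of $\mathcal{N}$ there are bounded by constants depending only on their order. The key structural point is quadraticity: $D\mathcal{N}|_{(0,0,0)}=0$. Hence $D\mathcal{N}|_{J(w)} = \int_0^1 D^2\mathcal{N}|_{J(tw)}[J(w),\cdot]\,dt$, so each $A_i$ is itself linear in $J(w_s)$ with smooth bounded coefficients. This gives $|A_i|\lesssim |J(v_1)|+|J(v_2)|$ pointwise, and similarly for spatial derivatives.

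Next we apply $\partial_\tau^a\nabla^{\ell'}$ with $\ell'\le \ell$ and expand by the Leibniz rule and the chain rule (Faà di Bruno). A general term is a product of three kinds of factors:
\begin{itemize}
\item one factor $\nabla^{\le \ell+2}\partial_\tau^{k}(v_1-v_2)$;
\item one factor $\nabla^{\le \ell+2}\partial_\tau^{j}w_{s}$, coming from the vanishing of $D\mathcal{N}(0)$;
\item further factors $\nabla^{\le\ell+2}\partial_\tau^{j_i}w_s$, produced when derivatives hit the coefficients $D^r\mathcal{N}|_{J(w_s)}$.
\end{itemize}
In each term the time-derivative orders add up to $a$: $k+j+\sum j_i=a$. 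The extra factors number at most $a+\ell$. Spatial ones are bounded by $\varepsilon_1<1$; time-differentiated ones are bounded by $B$, and there are at most $a$ of them. This yields the prefactor $(1+B)^a$, with the constant absorbing $\varepsilon_1$-powers and combinatorial coefficients. Since $\partial_\tau^j w_s = \partial_\tau^jv_2+s\,\partial_\tau^j(v_1-v_2)$, the distinguished factor is bounded by $\|\partial_\tau^jv_1\|_{C^{\ell+2}}+\|\partial_\tau^jv_2\|_{C^{\ell+2}}$, and $j+k\le a$. Taking sup norms and integrating in $s$ gives the stated bound.

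The main obstacle is bookkeeping: we must ensure that exactly one factor carries the difference and at least one factor carries a small or linear $w$ term. The second requirement is not automatic once derivatives fall on the coefficients $D^r\mathcal{N}|_{J(w_s)}$ with $r\ge 2$. We handle this by noting that any derivative of such a coefficient already produces a factor $\partial_\tau^{j}\nabla^{\le \ell+2}w_s$, which can serve as the distinguished factor. Only the undifferentiated coefficient $D\mathcal{N}|_{J(w_s)}$ needs the quadraticity argument above. Either way, every term contains a factor bounded by $\|\partial_\tau^j v_1\|_{C^{\ell+2}}+\|\partial_\tau^j v_2\|_{C^{\ell+2}}$.
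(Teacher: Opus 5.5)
Your proposal is correct and follows essentially the same route as the paper: the fundamental theorem of calculus along $v_2+s(v_1-v_2)$, then quadraticity $D\mathcal{N}|_{(0,0,0)}=0$ to extract a second factor of $J(v^s)$ (the paper writes the difference as $\int_0^1\int_0^1 D^2\mathcal{N}|_{J(tv^s)}[J(v^s),J(v_1-v_2)]\,dt\,ds$), then iterated chain/Leibniz differentiation with analyticity bounds on $D^{2+j}\mathcal{N}$, using that at most $a$ extra factors carry time derivatives to get $(1+B)^a$. Your closing observation, that derivatives landing on the coefficients already supply the distinguished $w_s$ factor so quadraticity is only needed for the undifferentiated coefficient, is a harmless variant of the same bookkeeping.
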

\begin{proof}

For any $w$ with $||w||_{C^{2}} \leq \varepsilon_1$ and $h_1,...h_q \in C^2$ we have the general formula:
\begin{align}\label{formula;diff-DN;2}
	\partial  D^q\mathcal{N}_{(w,\nabla w, \nabla^2 w)}& [J(h_1),\cdots, J(h_q)] =   D^{q+1}\mathcal{N}_{(w,\nabla w, \nabla^2 w)} [\partial J(w), J(h_1),\cdots, J(h_q)]  \nonumber\\
	+& D^q\mathcal{N}_{(w,\nabla w, \nabla^2 w)} [ \partial J( h_1),\cdots, J(h_q)]  +\cdots +D^q\mathcal{N}_{(w,\nabla w, \nabla^2 w)} [J(h_1),\cdots,  \partial J( h_q)] 
\end{align}
where $\partial$ is fixed but can be chosen from $\nabla_i, \nabla$ or $\partial_{\tau}$.
We consider the difference 
\begin{align}
	\mathcal{N}(v_1) - \mathcal{N}(v_2) = \int_0^1 D\mathcal{N}|_{(v^{s},\nabla v^{s}, \nabla^2 v^{s})}[J(v_1-v_2)]ds
\end{align}
where $v^{s} = v_2 + s(v_1-v_2)$. 
Since $D\mathcal{N}|_{(0,0,0)} = 0$, we further get
\begin{align}
	\mathcal{N}(v_1) - \mathcal{N}(v_2) =  \int_0^1\int_0^1 D^2\mathcal{N}|_{(tv^{s},\nabla tv^{s}, t\nabla^2 v^{s})}[J(v^s), J(v_1-v_2)]dtds
\end{align}
Iterating \eqref{formula;diff-DN;2} $a+\ell$ times we get
\begin{align}
	& \partial^{a+\ell} (\mathcal{N}(v_1) - \mathcal{N}(v_2))	=   \sum_{j=0}^{a+\ell}\sum_{\substack{k_1,\cdots, k_j\geq 1 \\ k_{j+1}, k_{j+2} \geq 0 \\ k_1+\cdots + k_{j+2} = a+\ell}} \nonumber  C_{k_1,\cdots, k_{j+2}}\cdot \\
	&\int_0^1\int_0^1 D^{2+j}\mathcal{N}|_{(tv^{s},\nabla tv^{s}, t\nabla^2 v^{s})} \Big[\partial^{k_1}J( tv^s), ..., \partial^{k_j}J(tv^s), \partial^{k_{j+1}}J(v^s), \partial^{k_{j+2}}J(v_1-v_2)\Big]dtds
\end{align}
where each $\partial$ can be chosen from $\nabla_i, \nabla$ or $\partial_{\tau}$. But they must match on both sides. Now apply this with $\partial^{a+\ell} = \nabla^\ell \partial_{\tau}^a$.  
Since $||tv^{s}|| \leq \max\{||v_1||_{C^{\ell+2}}, ||v_2||_{C^{\ell+2}} \} \leq \varepsilon_1$, by analyticity we have $\Big|D^{2+j}\mathcal{N}|_{(tv^{s},\nabla tv^{s}, t\nabla^2 v^{s})}\Big| \leq C_{a,\ell}$ when $j\leq a+\ell$. Moreover $||J(h)||_{C^0} \le C ||h||_{C^2}$. Therefore,
\begin{align}
	\big\Vert\partial_{\tau}^a \Big(\mathcal{N}(v_1) - \mathcal{N}(v_2)\Big)\big\Vert_{C^\ell} \leq  & C_{a,\ell} \sum_{j=0}^{a+\ell} \sum_{\substack{   k_1+\cdots + k_{j+2} = a}}   \prod_{i=1}^{j+1} \Big(\big\Vert\partial_{\tau}^{k_{i}} v_1 \big\Vert_{C^{\ell+2}}+\big\Vert\partial_{\tau}^{k_{i}} v_2 \big\Vert_{C^{\ell+2}} \Big)  \cdot \big\Vert\partial_{\tau}^{k_{j+2}} (v_1 - v_2)\big\Vert_{C^{\ell+2}}  \nonumber \\
	\leq & C_{a,\ell}(1+B)^a \sum_{j+k \leq a}\Big(\big\Vert\partial_{\tau}^j v_1 \big\Vert_{C^{\ell+2}}+\big\Vert\partial_{\tau}^j v_2 \big\Vert_{C^{\ell+2}} \Big)  \cdot \big\Vert\partial_{\tau}^k (v_1 - v_2)\big\Vert_{C^{\ell+2}} 
\end{align}
Here all  spatial derivatives are incorporated into $C^{\ell+2}$-norms.
The factor $(1+B)$ only receives power $a$ because there are at most $a$ nonzero elements among $k_1,\cdots, k_{j+1}$. 

\end{proof}

\begin{lemma}\label{Lem;N-tail;1}
	Let $a,\ell \geq 0$, $m \geq 2$ and $||v||_{C^{\ell+2}} \leq \varepsilon_1$. Suppose that there is $B>0$ such that $||\partial_{\tau}^j v  ||_{C^{\ell+2}} \leq B$ for all $0\leq j \leq a$, then there is a constant $C_{m, a,\ell}$ such that
	\begin{align}
		\big\Vert\partial_{\tau}^a\mathcal{N}_{>m}(v) \big\Vert_{C^\ell} \leq  C_{m, a,\ell}\cdot (1+B)^a \sum_{a_1+\cdots+a_{m+1} \leq a} \prod_{i=1}^{m+1} \big\Vert \partial_{\tau}^{a_i}v\big\Vert_{C^{\ell+2}}
	\end{align}
\end{lemma}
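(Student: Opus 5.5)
The plan is to mimic the proof of Lemma \ref{Lem;N-diff;1}, replacing the first-order difference formula with Taylor's formula with integral remainder of order $m+1$. For $\|v\|_{C^{\ell+2}}\le\varepsilon_1$, the map $s\mapsto \mathcal{N}(sv,s\nabla v,s\nabla^2 v)$ is analytic on $[0,1]$ by \eqref{Def;varepsilon-1;1}. Its Taylor coefficients at $s=0$ are exactly $\mathcal{N}_q(v)$ by Definition \ref{Def;N;1}. Hence
\begin{align*}
\mathcal{N}_{>m}(v)=\frac{1}{m!}\int_0^1(1-s)^m\, D^{m+1}\mathcal{N}\big|_{(sv,s\nabla v,s\nabla^2 v)}\big[\underbrace{J(v),\cdots,J(v)}_{m+1}\big]\,ds .
\end{align*}
This exhibits $\mathcal{N}_{>m}(v)$ as an $(m+1)$-linear form in $J(v)$, with coefficients depending on $J(sv)$. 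Note that $sv$ stays in the analyticity region uniformly in $s\in[0,1]$.

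Next, apply $\partial^{a+\ell}=\nabla^\ell\partial_\tau^a$ and iterate \eqref{formula;diff-DN;2}, which applies verbatim with base point $w=sv$ and directions $h_i=v$. Each derivative falls either on the base point, producing a new factor $\partial^{k}J(sv)$ and raising the order $D^{q}\mathcal{N}\to D^{q+1}\mathcal{N}$, or on one of the $m+1$ direction slots. The result is a finite sum, with $j\le a+\ell$, of terms of the form
\begin{align*}
\int_0^1(1-s)^m D^{m+1+j}\mathcal{N}\big|_{(sv,s\nabla v,s\nabla^2 v)}\big[\partial^{k_1}J(sv),\dots,\partial^{k_j}J(sv),\partial^{k_{j+1}}J(v),\dots,\partial^{k_{j+m+1}}J(v)\big]ds,
\end{align*}
where $k_1,\dots,k_j\ge1$ and $\sum k_i=a+\ell$.

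The derivatives are then estimated as follows. By analyticity, $|D^{m+1+j}\mathcal{N}|\le C_{m,a,\ell}$ on the relevant set, since $j$ is bounded. All spatial derivatives are absorbed into $C^{\ell+2}$ norms, using $\|J(h)\|_{C^0}\le C\|h\|_{C^2}$, exactly as in Lemma \ref{Lem;N-diff;1}. The time-derivative counts $a_i$ then satisfy $\sum a_i\le a$. The $m+1$ direction slots give the product $\prod_{i=1}^{m+1}\|\partial_\tau^{a_i}v\|_{C^{\ell+2}}$. Each base-point slot $\partial^{k_i}J(sv)$ with $k_i\ge1$ is bounded as follows:
\begin{itemize}
\item if it carries a time derivative, it is bounded by $B$;
\item if it carries only spatial derivatives, it is bounded by $\|v\|_{C^{\ell+2}}\le\varepsilon_1<1$.
\end{itemize}
At most $a$ slots carry time derivatives, which gives the factor $(1+B)^a$.

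The main obstacle is purely bookkeeping. The key point is that at least $m+1$ factors must survive as genuine $\|\partial_\tau^{a_i}v\|_{C^{\ell+2}}$ norms, rather than being absorbed into constants. This is guaranteed because the $m+1$ direction slots are never consumed: derivatives landing on the base point only add slots. A secondary point is that when spatial derivatives hit $\nabla^2 v$ inside $J$, the order can exceed $\ell+2$. This is handled by distributing the $\ell$ spatial derivatives so that each factor involves at most $\ell+2$ derivatives of $v$ in total, with the same convention as in Lemma \ref{Lem;N-diff;1}, where that convention was already used.
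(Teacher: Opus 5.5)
Your proposal is correct and follows essentially the same route as the paper. Both write $\mathcal{N}_{>m}(v)$ via the integral Taylor remainder with $D^{m+1}\mathcal{N}$ evaluated at $(sv,s\nabla v,s\nabla^2 v)$, iterate \eqref{formula;diff-DN;2} under $\nabla^\ell\partial_\tau^a$, bound the coefficients by analyticity, and count at most $a$ time-differentiated base-point slots to obtain $(1+B)^a$. Your explicit split of the base-point slots into time-differentiated ones (bounded by $B$) and purely spatial ones (bounded by $\varepsilon_1<1$) is a slightly cleaner version of the paper's bookkeeping.
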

\begin{proof}
By definition \ref{Def;N;1} and the fact that $\mathcal{N}(0) = 0$,  Taylor remainder formula gives
\begin{align}
	\mathcal{N}_{> m}(v) =& \mathcal{N}(v) - \mathcal{N}(0) - \sum_{q=0}^m \frac{1}{q!}\frac{d^q}{d s^q}\Big|_{s=0}\mathcal{N}(sv, s\nabla v, s\nabla^2 v) \nonumber\\
	=& \frac{1}{m!} \int_0^1 (1-s)^m \frac{d^{m+1}}{ds^{m+1}}\mathcal{N} {(sv, s\nabla v, s\nabla^2 v)} ds
\end{align}
We can then rewrite this as
\begin{align}
	\mathcal{N}_{> m}(v)  	= \frac{1}{m!} \int_0^1 (1-s)^m D^{m+1}\mathcal{N}\big|_{(sv,s\nabla v, s\nabla^2 v)}[\underbrace{J(v), \cdots, J(v)}_{m+1}] ds
\end{align}
Now we iterate \eqref{formula;diff-DN;2} $a+\ell$ times:
\begin{align}
	&\partial^{a+\ell} \mathcal{N}_{> m}(v) = \frac{1}{m!}\sum_{j=0}^{a+\ell}  \sum_{\substack{k_1+\cdots +k_j +\\ a_1+\cdots+ a_{m+1} = a +\ell \\ k_i\geq 1, a_i \geq 0}}  C_{k_1,\cdots, k_j, a_1,\cdots, a_{m+1}}\cdot \nonumber \\
	&\int_0^1 (1-s)^m D^{m+1 + j}\mathcal{N}|_{(sv, s\nabla v, s\nabla^2 v)} \big[\partial^{k_1} J(sv), .., \partial^{k_j} J(sv), \partial^{a_1} J(v), .. , \partial^{a_{m+1}} J(v) \big] ds
\end{align}
where  each $\partial$ can be chosen from $\nabla_i, \nabla$ or $\partial_{\tau}$, but they must match on both sides. 

Since $||sv||_{C^2} \leq \varepsilon_1$ for all $s\in [0,1]$, by the analyticity  of $\mathcal{N}$ in \eqref{Def;varepsilon-1;1}, we have the  bound on the coefficient:
 $\Big|D^{m+1 + j}\mathcal{N}\big|_{(sv, s\nabla v, s\nabla^2 v)}\Big| \leq C_{m,a,\ell}$ for $0\le j \le a+\ell$. Now we take $\nabla^\ell \partial_{\tau}^a$ in place of $\partial^{a+\ell} $ to obtain:
 \begin{align}
 	\Big\Vert\partial_{\tau}^a\mathcal{N}_{> m}(v) \Big\Vert_{C^\ell} & \leq   C_{m,a,\ell}\sum_{j=0}^{a} \sum_{\substack{k_1+\cdots +k_j +\\ a_1+\cdots+ a_{m+1} = a  \\ k_i\geq 1, a_i \geq 0}}  \prod_{i=1}^{j}  \big\Vert\partial_{\tau}^{k_{i}} (sv) \big\Vert_{C^{\ell+2}}  \cdot  \prod_{i=1}^{m+1}\big\Vert\partial_{\tau}^{a_i} v \big\Vert_{C^{\ell+2}}  
 \end{align}
 Here all  spatial derivatives are incorporated into $C^{\ell+2}$-norms. 
In the summation,  there are at most $a - a_1-\cdots a_{m+1}$ nonzero element among $k_1,\cdots, k_{j}$. Therefore,  we rearrange the estimate as
 \begin{align}
 	\Big\Vert\partial_{\tau}^a\mathcal{N}_{> m}(v) \Big\Vert_{C^\ell} 
 	 &\leq  C_{m,a,\ell}\sum_{j=0}^{a} \sum_{\substack{ a_1+\cdots+ a_{m+1} \leq a -j}} B^j   \prod_{i=1}^{m+1}\big\Vert\partial_{\tau}^{a_i} v \big\Vert_{C^{\ell+2}}  \nonumber\\
 	 & \leq C_{m,a,\ell} (1+B)^{a}  \sum_{\substack{ a_1+\cdots+ a_{m+1} \leq a}} \prod_{i=1}^{m+1}\big\Vert\partial_{\tau}^{a_i} v \big\Vert_{C^{\ell+2}}
 \end{align}

\end{proof}

\section{Fine asymptotic expansion}\label{s:fine asymptotic expansion}
Recall that a rescaled mean curvature flow $\bar{M}_{\tau}$ can be written as a graph over $\Sigma = \mathbb{S}^{n}(\sqrt{2n})$, and the graph function satisfies
\begin{align}\label{Eq;RMCF;2}
	\partial_{\tau}v = Lv + \mathcal{N}(v,\nabla v, \nabla^2 v)
\end{align}
We might abbreviate $\mathcal{N}(v,\nabla v, \nabla^2 v) = \mathcal{N}(v)$ when no confusion arises.  

The goal of this section is to prove the following Theorem, which gives the asymptotic expansion of the graph function up to \textbf{any} order:
\begin{theorem}\label{Thm;asymptotic;2}
	Suppose that $v$ solves \eqref{Eq;RMCF;2} and $v \rightarrow 0$ in $C^{\infty}$ as $\tau \rightarrow \infty$. Then there exists a unique family $\{F_{\mu}\}_{\mu \in \mathscr{S}}\subset \mathscr{F}$ such that for all integers $m, \ell\geq 0$ and $\varepsilon \in (0,1)$,   
\begin{align}
	\big\Vert v - \sum_{\substack{\mu  \in \mathscr{S} \\ \mu \leq \frac{m}{2n}}}e^{-\mu\tau}F_{\mu}(\tau) \big\Vert_{C^\ell(\Sigma)} = O(e^{-\frac{m+1-\varepsilon}{2n}\tau})
\end{align}
Moreover, $F_{\mu}$ satisfies the following formula:
\begin{align}\label{Eq;F;3}
	F_{\frac{m}{2n}}' - \Big(L+\frac{m}{2n}\Big) F_{\frac{m}{2n}} = \Biggr[\mathcal{N}_{\leq m} \Biggr(\sum_{\substack{\mu  \in \mathscr{S} \\ \mu \leq \frac{m-1}{2n}}}e^{-\mu\tau}F_{\mu}(\tau)\Biggr) \Biggr]_{\frac{m}{2n}}
\end{align}
for all integer $m\geq 2$, where we make the convention that $F_{\frac{m}{2n}} = 0$ if $\frac{m}{2n}\not\in \mathscr{S}$.
\end{theorem}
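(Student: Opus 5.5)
We argue by induction on $m$, constructing the coefficients $F_{\frac{m}{2n}}$ one at a time. Write $V_m := \sum_{\mu\in\mathscr S,\ \mu\le \frac{m}{2n}} e^{-\mu\tau}F_\mu(\tau)\in\mathscr E$ and $w_m := v - V_m$. The inductive claim is
\[
\|w_m\|_{C^\ell}=O\big(e^{-\frac{m+1-\varepsilon}{2n}\tau}\big)
\]
for all $\ell,\varepsilon$. To close the nonlinear estimates we also want the same bound for $\partial_\tau^a w_m$. This follows a posteriori from parabolic interior estimates on unit time intervals, applied to the equation for $w_m$ below; so we include it in the inductive hypothesis.

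\textbf{Base case.} Since $v\to0$ in $C^\infty$, standard decay for \eqref{Eq;RMCF;2} gives the base case. Modes in $E_0\oplus E_1$ (with $\lambda_0=-1$, $\lambda_1=-\tfrac12$) cannot persist for a solution tending to zero. Every other mode decays at rate $\ge\lambda_2=\frac1n=\frac{2}{2n}$, and the nonlinearity is quadratic. Concretely, an energy/spectral argument on $\Pi_{\le\lambda}$ and $\Pi_{>\lambda}$ (a Merle–Zaag type ODE lemma), with smallness of $\mathcal N$, gives $\|v\|_{C^\ell}=O(e^{-(\frac{1}{n}-\varepsilon)\tau})$. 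This is the statement for $m=1$, where the sum is empty since $\min\mathscr S=\frac1n$. Interpolation with the $C^\infty$ convergence upgrades $L^2$ decay to all $C^\ell$ norms.

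\textbf{Inductive step $m-1\to m$.} The remainder satisfies
\[
\partial_\tau w_{m-1}-Lw_{m-1} = \big(\mathcal N(v)-\mathcal N(V_{m-1})\big) + \mathcal N_{>m}(V_{m-1}) + \Big(\mathcal N_{\le m}(V_{m-1}) - \big(\partial_\tau V_{m-1}-LV_{m-1}\big)\Big).
\]
We bound the three groups as follows.
\begin{itemize}
\item The first term is $O(e^{-(\frac1n+\frac{m}{2n}-\varepsilon)\tau})$ by Lemma \ref{Lem;N-diff;1}, using $\|V_{m-1}\|,\|v\|=O(e^{-\tau/n+})$.
\item The second term is $O(e^{-(m+1)(\frac1n-\varepsilon)\tau})$ by Lemma \ref{Lem;N-tail;1}.
\item The third term lies in $\mathscr E$ by Lemma \ref{Lem;N-m;1}. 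By the inductive construction via \eqref{Eq;F;3}, its $[\cdot]_\mu$ components vanish for $\mu\le\frac{m-1}{2n}$. Its $\mu=\frac{m}{2n}$ component is exactly the right side of \eqref{Eq;F;3}, call it $e^{-\frac m{2n}\tau}R_m(\tau)$. A check is needed here: contributions from $\mathcal N_q$ with $q>m$ have rate at least $q/n>m/(2n)$, so truncating at $\le m$ does not change the $\frac m{2n}$ coefficient. Components with $\mu>\frac m{2n}$ are $O(e^{-\frac{m+1}{2n}\tau}\tau^k)$ because $\mathscr S\subset\frac1{2n}\mathbb Z$.
\end{itemize}
Next we solve \eqref{Eq;F;3} in $\mathscr F$ for a polynomial $F$, call it $F_{\frac m{2n}}^{p}$. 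This is linear algebra: match powers of $\tau$ top-down.
\begin{itemize}
\item Off the eigenvalue, use Proposition \ref{Prop;L-invertible} on $(1-\Pi_{\frac m{2n}})$.
\item On $E_k$ with $\lambda_k=\frac m{2n}$, the equation reduces to $\partial_\tau f=g(\tau)$, so integrate the polynomial, raising the degree by one.
\end{itemize}
Then $\tilde w := w_{m-1}-e^{-\frac m{2n}\tau}F^{p}_{\frac m{2n}}$ satisfies $\partial_\tau\tilde w-L\tilde w=O(e^{-\frac{m+1-\varepsilon}{2n}\tau})$, with all derivatives, together with a priori decay $O(e^{-\frac{m-\varepsilon}{2n}\tau})$.

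\textbf{Linear decomposition.} This is the main step. Decompose $\tilde w$ spectrally with $\Pi_{<\frac{m+1}{2n}}$ and $\Pi_{\ge\frac{m+1}{2n}}$. The high part is bounded by Duhamel, since the semigroup there decays at rate $\ge\frac{m+1}{2n}$. For each low mode $E_k$, solve the ODE $\dot a=-\lambda_k a+f$ with $|f|\lesssim e^{-\frac{m+1-\varepsilon}{2n}\tau}$.
\begin{itemize}
\item If $\lambda_k<\frac m{2n}$, the a priori decay forces integrating from $\infty$, so the mode is $O(e^{-\frac{m+1-\varepsilon}{2n}\tau})$.
\item If $\lambda_k=\frac m{2n}$, the mode equals $e^{-\lambda_k\tau}c_k+O(e^{-\frac{m+1-\varepsilon}{2n}\tau})$ with $c_k\in E_k$ constant.
\end{itemize}
We set $F_{\frac m{2n}}=F^{p}_{\frac m{2n}}+\sum c_k$; it still solves \eqref{Eq;F;3}, since constants in $E_k$ solve the homogeneous equation. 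This gives the bound for $w_m$ in $L^2$. We then upgrade to $C^\ell$ and $\partial_\tau^a$ bounds by parabolic Schauder estimates on $[\tau-1,\tau+1]$ and interpolation with the uniform $C^\infty$ bounds, which is where the $\varepsilon$-loss is absorbed.

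The main obstacle is this step: controlling the resonant modes with a polynomial-in-$\tau$ forcing, and making the $\varepsilon$-losses and the derivative bookkeeping uniform.

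\textbf{Uniqueness.} Uniqueness follows from Proposition \ref{Prop;E-F-derivative;2}: two such families yield $\sum e^{-\mu\tau}(F_\mu-F'_\mu)=O(e^{-\frac{m+1-\varepsilon}{2n}\tau})$ for every $m$. Comparing asymptotics from the smallest $\mu$ forces $F_\mu=F'_\mu$.
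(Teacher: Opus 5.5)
Your proposal is correct and is essentially the paper's argument. It uses the same three-way split of the nonlinear error via Lemmas \ref{Lem;N-diff;1}, \ref{Lem;N-tail;1} and \ref{Lem;N-m;1}, the same polynomial particular solution as in Proposition \ref{Prop;identity;2}, and the same mode-by-mode analysis: non-resonant low modes integrated from $\infty$, a free constant in $E_k$ at resonance, and semigroup decay on the high modes. The only organizational difference is that you add one coefficient at a time to the remainder $w_{m-1}$, whereas the paper re-applies Lemma \ref{Lem;linear;1} to all of $v$ and identifies the old coefficients by uniqueness.

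The one real difference is the base case. You need only the rate $\|v\|_{C^\ell}=O(e^{-(1/n-\varepsilon)\tau})$ and recover $F_{1/n}\in E_2$ from your step $m=2$, while the paper quotes Strehlke's leading-order asymptotics. Your Merle--Zaag sketch is fine in principle, but because $\mathcal N$ contains $\nabla^2 v$ it does not close as stated. You first need to bound $\|v(\tau)\|_{H^2}$ by $L^2$ norms on $[\tau-1,\tau]$ via parabolic smoothing, or use an $H^1$-type energy. That is exactly the work the paper outsources to Strehlke.
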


Our strategy is to treat the nonlinear term $\mathcal{N}$ as if it is a non-autonomous term and see how $v$ responds. The key is the following Lemma:
\begin{lemma}\label{Lem;linear;1}
	Let $m\geq 0$ be an integer and $\varepsilon \in (0,1)$. Given a smooth function $v(x,\tau)$ satisfying 
	\begin{align}\label{Eq;v;3}
		\partial_{\tau}v - Lv = G
	\end{align} 
	with $v \rightarrow 0$ in $C^{\infty}$ as $\tau \rightarrow \infty$. Suppose that there exist $\{G_{\mu}\}_{\mu \in \mathscr{S}\cap (0, \frac{m}{2n}]} \subset \mathscr{F}$ such that 
	\begin{align}
		\big\Vert G - \sum_{\mu \in \mathscr{S}\cap (0, \frac{m}{2n}]}e^{-\mu\tau} G_{\mu} \big\Vert_{C^\ell} = O(e^{-\frac{m+1-\varepsilon}{2n}\tau})
	\end{align}
	for all $\ell \geq 0$. Then there exists a unique collection $\{F_{\mu}\}_{\mu \in \mathscr{S}\cap (0, \frac{m}{2n}]} \subset \mathscr{F}$ such that
	\begin{align}\label{estimate;v;2}
		\big\Vert v - \sum_{\mu \in \mathscr{S}\cap (0, \frac{m}{2n}]}e^{-\mu\tau} F_{\mu} \big\Vert_{C^\ell} = O(e^{-\frac{m+1-\varepsilon}{2n}\tau})
	\end{align}
	for all $\ell \geq 0$. Moreover, for each $\mu \in \mathscr{S}\cap (0, \frac{m}{2n}]$,  $F_{\mu}$ solves the equation 
	\begin{align}\label{Eq;F-G;2}
		\partial_{\tau}F_{\mu} - (L+\mu) F_{\mu} = G_{\mu}
	\end{align}
\end{lemma}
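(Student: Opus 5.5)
Our plan is to construct the $F_\mu$ explicitly, subtract the resulting particular solution, and then control the remainder mode by mode using the spectral decomposition of $L$ on $\Sigma$.

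\textbf{Step 1: construct the $F_\mu$.} For each $\mu\in\mathscr{S}\cap(0,\frac{m}{2n}]$ we solve \eqref{Eq;F-G;2} within $\mathscr{F}$. Write $G_\mu=\sum_{j\le d}\tau^j g_j$ and decompose $g_j=\Pi_\mu g_j+(1-\Pi_\mu)g_j$.
\begin{itemize}
\item On the range of $1-\Pi_\mu$, make a polynomial ansatz of degree $d$ and solve downward in $j$. The top coefficient satisfies $-(L+\mu)f_d=(1-\Pi_\mu)g_d$. Each lower coefficient satisfies $-(L+\mu)f_j=g_j-(j+1)f_{j+1}$ on that range. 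Each equation is solvable by Proposition \ref{Prop;L-invertible}, and the solutions are smooth by elliptic regularity.
\item On $E_k$ with $\lambda_k=\mu$, the equation reduces to $\partial_\tau F=\Pi_\mu G_\mu$. We solve it by integrating in $\tau$, which raises the polynomial degree by one and produces the resonant $\tau$-powers.
\end{itemize}
These choices give a particular solution $\Phi=\sum e^{-\mu\tau}F_\mu$ with $\partial_\tau\Phi-L\Phi=\sum e^{-\mu\tau}G_\mu$. The solution of \eqref{Eq;F-G;2} in $\mathscr{F}$ is unique only up to adding elements of $E_k$ when $\mu=\lambda_k$ (Proposition \ref{Prop;unique-linear}). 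That constant is fixed by the data $v$ in Step 2.

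\textbf{Step 2: estimate the remainder.} Set $w=v-\Phi$. Then $\partial_\tau w-Lw=R$ with $\|R\|_{C^\ell}=O(e^{-\frac{m+1-\varepsilon}{2n}\tau})$, and $w\to0$. Let $\gamma=\frac{m+1-\varepsilon}{2n}$ and project onto eigenspaces, $w_k=\Pi_{\lambda_k}w$, so that $w_k'+\lambda_k w_k=R_k$. We split the modes into two groups.
\begin{itemize}
\item \emph{High modes, $\lambda_k>\gamma$.} Variation of constants from a large time $\tau_0$ gives $e^{-\lambda_k(\tau-\tau_0)}w_k(\tau_0)+O(e^{-\gamma\tau})$. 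This is summed in $H^r$ via an energy estimate for $\Pi_{>\gamma}w$: $\frac{d}{d\tau}\|\Pi_{>\gamma}w\|_{H^r}^2\le-2\lambda_*\|\cdot\|^2+2\|\cdot\|\,\|R\|$, where $\lambda_*>\gamma$ is the first eigenvalue above $\gamma$.
\item \emph{Finitely many low modes, $\lambda_k\le\gamma$.} Integrate from $\infty$. The decay $w\to0$ forces $w_k(\tau)=-\int_\tau^\infty e^{\lambda_k(s-\tau)}R_k(s)\,ds$, and this integral is $O(e^{-\gamma\tau})$ because $\lambda_k<\gamma$. The case $\lambda_k=\gamma$ is impossible for an appropriate $\varepsilon$, since $\gamma\notin\frac{1}{2n}\mathbb{Z}$.
\end{itemize}
The one subtlety is the resonant modes $\lambda_k=\mu\le\frac{m}{2n}$. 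For these, $w_k=\Pi_\mu(v-\Phi)$ still satisfies the same ODE, so $w_k$ equals $c_ke^{-\lambda_k\tau}$ plus the decaying integral, for a constant vector $c_k\in E_k$. We absorb $c_k$ into $F_{\lambda_k}$; this is exactly the free kernel element from Step 1.

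Together these give $\|w\|_{H^r}=O(e^{-\gamma\tau})$ for all $r$. Sobolev embedding then yields the $C^\ell$ estimate \eqref{estimate;v;2}, with $\varepsilon$ adjusted slightly if needed.

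\textbf{Step 3: uniqueness.} Suppose two families satisfy \eqref{estimate;v;2}. Their difference $\sum e^{-\mu\tau}(F_\mu-F'_\mu)$ is $O(e^{-\frac{m+1-\varepsilon}{2n}\tau})$. Comparing asymptotics, the smallest $\mu$ with nonzero difference would dominate, which is a contradiction, as in Proposition \ref{Prop;E-F-derivative;2}.

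The main obstacle is the high-mode estimate in Step 2. One needs a uniform-in-$k$ decay with the correct rate, taken in all $H^r$ norms at once. We plan to handle it through the energy inequality, using that $\lambda_*>\gamma$ strictly, followed by a Grönwall comparison with $e^{-\gamma\tau}$.
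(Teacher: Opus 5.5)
Your proposal is correct and is essentially the paper's argument in a different order. The paper obtains the same split $v=v_m+\hat v$ from the Duhamel formula, using Proposition \ref{Prop;identity;2} to see $v_m\in\mathscr{E}$ and recovering \eqref{Eq;F-G;2} afterwards by unique representation; you instead first build the particular solution $-\sum_j(L+\mu)^{-j-1}\partial_\tau^j(1-\Pi_\mu)G_\mu$ plus the integrated resonant part, then fix the kernel constants from the data, arriving at the same remainder, the same energy estimate above $\lambda_K$ and the same backward integral below it.

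One point needs correcting. $w\to 0$ forces the homogeneous constants to vanish only on $E_0,E_1$, where $\lambda_0,\lambda_1<0$. On every $E_k$ with $k\ge 2$ and $\lambda_k<\gamma$ the constant $c_k$ is genuinely free and must be absorbed. This is legitimate because each such $\lambda_k$ lies in $\mathscr{S}\cap(0,\frac{m}{2n}]$, so every positive low mode is ``resonant'' in your sense.
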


We first present a technical proposition, the purpose of which will be clear when we prove Lemma \ref{Lem;linear;1}.
\begin{proposition}\label{Prop;identity;2}
	Suppose that  $\mu \in \mathscr{S}$, $G_{\mu}\in \mathscr{F}$ and the degree of $G_{\mu}$ is $d_{\mu}$. Let $\Pi_{\mu} = 0$ if $\mu \neq \lambda_k$ for any $k\geq 0$. Then we have the following identity
\begin{align}
	 \int_0^{\tau}e^{L(\tau-s)}e^{-\mu s}	G_{\mu}(s)ds  =& e^{-\mu \tau}\Biggr(-\sum_{j=0}^{d_{\mu}} (L+\mu)^{-j-1}\partial_{\tau}^j (1-\Pi_{\mu})G_{\mu}(\tau) \nonumber  + \int_0^{\tau} \Pi_{\mu} G_{\mu}(s)ds\Biggr) \nonumber \\
	  &+     e^{L\tau}\sum_{j=0}^{d_{\mu}} (L+\mu)^{-j-1} \frac{\partial^j}{\partial s^{j}}\Big|_{s=0} (1-\Pi_{\mu})G_{\mu}
\end{align}
In particular, for any $k\geq 0$,  there exist $\bar{G}_1$ and $\bar{G}_2$ in $\mathscr{F}$ such that
\begin{align}
	\int_0^{\tau}e^{L(\tau-s)}e^{-\mu s}\Pi_{\lambda_k}	G_{\mu}(s) ds = e^{-\mu \tau} \bar{G}_1 + e^{-\lambda_k\tau} \bar{G}_2
\end{align}
\end{proposition}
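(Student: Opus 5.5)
Split $G_\mu = \Pi_\mu G_\mu + (1-\Pi_\mu)G_\mu$ and treat the two pieces separately, since the semigroup $e^{L(\tau-s)}$ commutes with $\Pi_\mu$ and $\Pi_{\lambda_k}$.

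\textbf{Resonant piece.} On the range of $\Pi_\mu$, which is nonzero only when $\mu=\lambda_k$, the operator $L$ acts as $-\mu$. So $e^{L(\tau-s)}\Pi_\mu G_\mu(s)=e^{-\mu(\tau-s)}\Pi_\mu G_\mu(s)$. Multiplying by $e^{-\mu s}$ gives $e^{-\mu\tau}\Pi_\mu G_\mu(s)$, and integrating in $s$ yields the term $e^{-\mu\tau}\int_0^\tau \Pi_\mu G_\mu(s)\,ds$. This integral is a polynomial in $\tau$ of degree $d_\mu+1$ with coefficients in $E_k$, hence lies in $\mathscr{F}$.

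\textbf{Non-resonant piece.} Write $H=(1-\Pi_\mu)G_\mu$, a polynomial in $s$ of degree at most $d_\mu$ with values in $(1-\Pi_\mu)C^\infty$. By Proposition \ref{Prop;L-invertible}, $(L+\mu)^{-1}$ is defined on this space and preserves smoothness. Since $H$ is polynomial in $s$, it is cleanest to verify the identity by differentiation rather than by repeated integration by parts. Set
\[
\Phi(s)=-e^{L(\tau-s)}e^{-\mu s}\sum_{j=0}^{d_\mu}(L+\mu)^{-j-1}\partial_s^jH(s).
\]
Then
\[
\frac{d}{ds}\Phi=e^{L(\tau-s)}e^{-\mu s}\Big[(L+\mu)\sum_j(L+\mu)^{-j-1}\partial_s^jH-\sum_j(L+\mu)^{-j-1}\partial_s^{j+1}H\Big].
\]
The bracket telescopes to $H(s)$, because $\partial_s^{d_\mu+1}H=0$. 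Hence $\int_0^\tau e^{L(\tau-s)}e^{-\mu s}H\,ds=\Phi(\tau)-\Phi(0)$. Evaluating, $\Phi(\tau)$ is the first sum in the identity and $-\Phi(0)$ is the $e^{L\tau}$ term. Adding the resonant piece proves the stated formula.

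The only point needing care is justifying the calculus: we need $e^{L(\tau-s)}$ to be differentiable in $s$ with derivative $-Le^{L(\tau-s)}$, acting on smooth functions. This holds, since $e^{L t}$ is an entire function of $t$ on each finite-dimensional space $E_k$, and the series converges in every $H^r$ because the coefficients are smooth. I expect this justification to be routine rather than a real obstacle.

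\textbf{The ``in particular'' statement.} Apply the identity with $G_\mu$ replaced by $\Pi_{\lambda_k}G_\mu$, which is still in $\mathscr{F}$ with degree at most $d_\mu$.

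If $\mu=\lambda_k$, then $\Pi_\mu\Pi_{\lambda_k}G_\mu=\Pi_{\lambda_k}G_\mu$ and the non-resonant part vanishes. We obtain $e^{-\mu\tau}\bar G_1$ with $\bar G_1=\int_0^\tau\Pi_{\lambda_k}G_\mu$, and we take $\bar G_2=0$.

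If $\mu\neq\lambda_k$, the resonant part vanishes. All the operators $(L+\mu)^{-j-1}$ act on $E_k$ as the scalar $(\mu-\lambda_k)^{-j-1}$, so the first sum is $e^{-\mu\tau}\bar G_1$ with $\bar G_1\in\mathscr{F}$. In the last term, $e^{L\tau}$ acts on $E_k$ as $e^{-\lambda_k\tau}$, which gives $e^{-\lambda_k\tau}\bar G_2$ with $\bar G_2$ a fixed smooth function, a degree-$0$ element of $\mathscr{F}$.
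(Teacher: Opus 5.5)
Your proposal is correct and follows the paper's proof: the same telescoping antiderivative $-e^{L(\tau-s)}e^{-\mu s}\sum_{j}(L+\mu)^{-j-1}\partial_s^j(1-\Pi_\mu)G_\mu(s)$ (the paper writes this as $-e^{-\mu\tau}e^{(L+\mu)(\tau-s)}(\cdots)$), with the top term killed by $\partial_s^{d_\mu+1}G_\mu=0$, the resonant part handled by $L=-\mu$ on the range of $\Pi_\mu$, and the ``in particular'' claim obtained by substituting $\Pi_{\lambda_k}G_\mu$ and using $e^{L\tau}=e^{-\lambda_k\tau}$ on $E_k$. Your case split $\mu=\lambda_k$ versus $\mu\neq\lambda_k$ is just a more explicit form of the paper's uniform definition of $\bar G_1,\bar G_2$. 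The only slip is that $\int_0^\tau\Pi_\mu G_\mu$ has degree \emph{at most} $d_\mu+1$, not exactly $d_\mu+1$, which does not affect the argument.
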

\begin{remark}
    The operator $(L+\mu)^{-1}$ is justified in Proposition \ref{Prop;L-invertible}.
\end{remark}
\begin{proof}Let us prove the first statement.
Note that the RHS is equal to
\begin{align}
	&  e^{-\mu\tau} \sum_{j=0}^{d_{\mu}}\Big( -e^{(L+\mu)(\tau-s)}(L+\mu)^{-j-1}\frac{\partial^j}{\partial s ^{j}}(1-\Pi_{\mu})G_{\mu} (s)\Big|_{s=0}^{\tau}\Big) + e^{-\mu\tau}\int_0^{\tau}\Pi_{\mu}G_{\mu}(s)ds\nonumber\\
	=& e^{-\mu\tau}\int_0^{\tau}\sum_{j=0}^{d_{\mu}} \frac{d}{ds} \Big( -e^{(L+\mu)(\tau-s)}(L+\mu)^{-j-1}\frac{\partial^j}{\partial s ^{j}} (1-\Pi_{\mu}) G_{\mu} (s)\Big)ds  + e^{-\mu\tau}\int_0^{\tau}\Pi_{\mu}G_{\mu}(s)ds\nonumber\\
	=& e^{-\mu\tau}\int_0^{\tau}\sum_{j=0}^{d_{\mu}}  \Big( e^{(L+\mu)(\tau-s)}(L+\mu)^{-j}\frac{\partial^j}{\partial s ^{j}}(1-\Pi_{\mu})G_{\mu} (s) - e^{(L+\mu)(\tau-s)}(L+\mu)^{-j-1}\frac{\partial^{j+1}}{\partial s ^{j+1}}(1-\Pi_{\mu})G_{\mu} (s)\Big)ds \nonumber\\
	&+e^{-\mu\tau}\int_0^{\tau}\Pi_{\mu}G_{\mu}(s)ds \nonumber\\
	=& e^{-\mu\tau}\int_0^{\tau}  \Big( e^{(L+\mu)(\tau-s)} (1-\Pi_{\mu})G_{\mu} (s) - e^{(L+\mu)(\tau-s)}(L+\mu)^{-d_\mu-1}\frac{\partial^{d_{\mu}+1}}{\partial s ^{d_{\mu}+1}}(1-\Pi_{\mu})G_{\mu} (s)\Big)ds \nonumber \\
	 &+ e^{-\mu\tau}\int_0^{\tau}\Pi_{\mu}G_{\mu}(s)ds \nonumber\\
	=& \int_0^{\tau}    e^{L(\tau-s)} e^{-\mu s}(1-\Pi_{\mu})G_{\mu} (s) ds + e^{-\mu\tau}\int_0^{\tau}\Pi_{\mu}G_{\mu}(s)ds
\end{align}
where we used the fact that $\frac{\partial^{d_{\mu}+1}}{\partial s ^{d_{\mu}+1}}(1-\Pi_{\mu})G_{\mu} (s) =0$ to get the last line. Next, we observe that $L+\mu = 0$ on the image of $\Pi_{\mu}$. Therefore, the last term is:  
\begin{align}
	e^{-\mu\tau}\int_0^{\tau}\Pi_{\mu}G_{\mu}(s)ds =e^{-\mu\tau}\int_{0}^{\tau} e^{(L+\mu)(\tau-s)} \Pi_{\mu}G_{\mu}(s)ds =  \int_{0}^{\tau} e^{L(\tau-s)} e^{-\mu s}\Pi_{\mu}G_{\mu}(s)ds 
\end{align}
Putting them together, the assertion follows immediately.

For the second statement, since projection $\Pi_{\lambda_k}$ preserves $\mathscr{F}$ we use $\Pi_{\lambda_k}G_{\mu}$ in place of $G_{\mu}$ to obtain
\begin{align}
	&\int_0^{\tau}e^{L(\tau-s)}	e^{-\mu s}\Pi_{\lambda_k} G_{\mu}(s)ds  \nonumber\\
	 =& e^{-\mu \tau}\Biggr( -\sum_{j=0}^{d_{\mu}} (L+\mu)^{-j-1}\partial_{\tau}^j (1-\Pi_{\mu})\Pi_{\lambda_k} G_{\mu}(\tau)  + \int_0^{\tau} \Pi_{\mu} \Pi_{\lambda_k} G_{\mu}(s)ds\Biggr) \nonumber \\ 
	 &+     e^{L\tau}\sum_{j=0}^{d_{\mu}} (L+\mu)^{-j-1} \frac{\partial^j}{\partial s^{j}}\Big|_{s=0} (1-\Pi_{\mu})\Pi_{\lambda_k} G_{\mu}
\end{align}
Set 
\begin{align}
	\bar{G}_1 =  &-\sum_{j=0}^{d_{\mu}} (L+\mu)^{-j-1}\partial_{\tau}^j (1-\Pi_{\mu})\Pi_{\lambda_k} G_{\mu}(\tau)  + \int_0^{\tau} \Pi_{\mu} \Pi_{\lambda_k} G_{\mu}(s)ds \\
	\bar{G}_2 = &\sum_{j=0}^{d_{\mu}} (L+\mu)^{-j-1} \frac{\partial^j}{\partial s^{j}}\Big|_{s=0} (1-\Pi_{\mu})\Pi_{\lambda_k} G_{\mu}
\end{align} 
They are both elements of $\mathscr{F}$. Moreover, $e^{L\tau} = e^{-\lambda_{k}\tau}$ on the image of $\Pi_{\lambda_k}$, the assertion then follows. 
\end{proof}
\begin{proof}[Proof of Lemma \ref{Lem;linear;1}]
The uniqueness is clear. We only prove the existence. After translating the initial time, we may work on $[0,\infty)$.

First by the solution formula we get
\begin{align}\label{formula;v;11}
	v =& e^{L\tau} v(0) + \int_0^{\tau} e^{L(\tau-s)}G(s)ds \nonumber \\
	=&e^{L\tau} v(0) + \int_0^{\tau} e^{L(\tau-s)}\hat{G}(s)ds + \sum_{\mu \in \mathscr{S}\cap (0, \frac{m}{2n}]} \int_0^{\tau}    e^{L(\tau-s)} e^{-\mu s} G_{\mu} (s) ds
\end{align}
We begin by constructing $F_{\mu}$. 
Let 
\begin{align}\label{estimate;hat-G;1}
	\hat{G} =  G - \sum_{\mu \in \mathscr{S}\cap (0, \frac{m}{2n}]}e^{-\mu\tau} G_{\mu} 
\end{align}
By our assumption, for all $\ell\geq 0$
\begin{align}
	||\hat{G}||_{H^\ell} = O(e^{-\frac{m+1-\varepsilon}{2n}\tau})
\end{align}
Choose $K$ to be the integer such that $\lambda_{K-1} < \frac{m+1-\varepsilon}{2n}<\lambda_K$. 

The Lemma will be proved in 4 steps. The idea is to use $v_m$ to represent $ \sum_{\mu \in \mathscr{S}\cap (0, \frac{m}{2n}]}e^{-\mu\tau} F_{\mu} $, and then show that $||v- v_m||_{C^\ell} =  O(e^{-\frac{m+1-\varepsilon}{2n}\tau})$ for all $\ell$. In step 1, we construct $v_m$ and $\hat{v}$, then show that $v- v_m = \hat{v}$.  In step 2, we find the equation for $v_m$ and $\hat{v}$. In step 3, we derive the decay estimate $\hat{v} = O(e^{-\frac{m+1-\varepsilon}{2n}\tau})$.  In step 4, we show that $v_m= \sum_{\mu \in \mathscr{S}\cap (0, \frac{m}{2n}]}e^{-\mu\tau} F_{\mu} $ and $F_{\mu}$ satisfies the right equation.

\noindent\textbf{Step1:} Decompose $v$ into $v_m$ and $\hat{v}$.

Let us define
\begin{align}\label{formula;v-m;12}
	v_m :=& e^{L\tau}\Pi_{< \lambda_K}v(0) + \int_0^{\infty} e^{L(\tau-s)}\Pi_{< \lambda_K}\hat{G}(s)ds  +\sum_{\mu \in \mathscr{S}\cap (0, \frac{m}{2n}]}\int_0^{\tau}e^{L(\tau-s)} \Pi_{<\lambda_K}\Big(e^{-\mu s}G_{\mu}(s)\Big)ds  \nonumber\\
	&-\sum_{\mu \in \mathscr{S}\cap (0, \frac{m}{2n}]}e^{-\mu\tau} \sum_{j=0}^{d_{\mu}} (L+\mu)^{-j-1}\partial_{\tau}^j\Pi_{\geq \lambda_{K}}G_{\mu}(\tau)
\end{align}	 
Since $\mu < \lambda_K$, the operator $(L+\mu)^{-j-1}$ is well defined by Proposition \ref{Prop;L-invertible}. 
Next, define
\begin{align}\label{formula;hat-v;13}
	\hat{v} =& e^{L\tau}\Pi_{\geq  \lambda_K}v(0) + \int_0^{\tau} e^{L(\tau-s)}\Pi_{\geq \lambda_K}\hat{G}(s)ds - \int_{\tau}^{\infty} e^{L(\tau-s)}\Pi_{< \lambda_K}\hat{G}(s)ds \nonumber \\  &+ \sum_{\mu \in \mathscr{S}\cap (0, \frac{m}{2n}]}e^{L\tau} \sum_{j=0}^{d_{\mu}} (L+\mu)^{-j-1} \frac{\partial^j}{\partial s^{j}}\Big|_{s=0}\Pi_{\geq \lambda_{K}}G_{\mu}
\end{align}
Since $\Pi_{\mu} \Pi_{\geq \lambda_{K}}\equiv 0$, by Proposition \ref{Prop;identity;2},  the last terms of \eqref{formula;v-m;12} and \eqref{formula;hat-v;13}, respectively, sums up to
\begin{align}\label{formula;hat-v;14}
	\sum_{\mu \in \mathscr{S}\cap (0, \frac{m}{2n}]} \int_0^{\tau}    e^{L(\tau-s)} e^{-\mu s}\Pi_{\geq \lambda_{K}}G_{\mu} (s) ds
\end{align}

Therefore, we obtain that
\begin{align}
	v_m + \hat{v} =  & e^{L\tau}v(0) + \int_0^{\tau} e^{L(\tau-s)}\hat{G}(s)ds  +\sum_{\mu \in \mathscr{S}\cap (0, \frac{m}{2n}]}\int_0^{\tau}e^{L(\tau-s)} \Pi_{<\lambda_K}\Big(e^{-\mu s}G_{\mu}(s)\Big)ds \nonumber \\
	&+\sum_{\mu \in \mathscr{S}\cap (0, \frac{m}{2n}]} \int_0^{\tau}    e^{L(\tau-s)} e^{-\mu s}\Pi_{\geq \lambda_{K}}G_{\mu} (s) ds \nonumber\\
	=& e^{L\tau}v(0) + \int_0^{\tau} e^{L(\tau-s)}\hat{G}(s)ds  +\sum_{\mu \in \mathscr{S}\cap (0, \frac{m}{2n}]}\int_0^{\tau}e^{L(\tau-s)}   e^{-\mu s}G_{\mu}(s) ds
\end{align}
 Comparing with \eqref{formula;v;11} we get:
\begin{align}\label{Eq;v-m-v;5}
	v = v_m + \hat{v} 
\end{align}

\noindent\textbf{Step2: }$v_m$ and $\hat{v}$ satisfy the right equation.

Let us compute
\begin{align}
	&(\partial_{\tau} - L) v_m \nonumber\\
	 =& (\partial_{\tau} - L) \Big[e^{L\tau}\Pi_{< \lambda_K}v(0) + \int_0^{\infty} e^{L(\tau-s)}\Pi_{< \lambda_K}\hat{G}(s)ds  \Big] \\
	&+(\partial_{\tau} - L) \Big[\sum_{\mu \in \mathscr{S}\cap (0, \frac{m}{2n}]}\int_0^{\tau}e^{L(\tau-s)} \Pi_{<\lambda_K}\Big(e^{-\mu s}G_{\mu}(s)\Big)ds \Big] \nonumber\\
	& - (\partial_{\tau} - L) \Big[ \sum_{\mu \in \mathscr{S}\cap (0, \frac{m}{2n}]}e^{-\mu\tau} \sum_{j=0}^{d_{\mu}} (L+\mu)^{-j-1}\partial_{\tau}^j\Pi_{\geq \lambda_{K}}G_{\mu}(\tau)\Big] \nonumber\\
	=& 0 +\sum_{\mu \in \mathscr{S}\cap (0, \frac{m}{2n}]}\Pi_{<\lambda_K}\Big(e^{-\mu \tau}G_{\mu}(\tau)\Big)  - \sum_{\mu \in \mathscr{S}\cap (0, \frac{m}{2n}]}e^{-\mu \tau} \sum_{j=0}^{d_{\mu}} (\partial_{\tau} - L-\mu)(L+\mu)^{-j-1}\partial_{\tau}^j\Pi_{\geq \lambda_{K}}G_{\mu}(\tau) \nonumber \\
	=& \sum_{\mu \in \mathscr{S}\cap (0, \frac{m}{2n}]} \Biggr\{\Pi_{<\lambda_K}\Big(e^{-\mu \tau}G_{\mu}(\tau)\Big) -  e^{-\mu \tau} \sum_{j=0}^{d_{\mu}} \Big[(L+\mu)^{-j-1}\partial_{\tau}^{j+1}\Pi_{\geq \lambda_{K}}G_{\mu}(\tau) -(L+\mu)^{-j}\partial_{\tau}^j\Pi_{\geq \lambda_{K}}G_{\mu}(\tau)  \Big]\Biggr\}
\end{align}
Since $\frac{\partial^{d_{\mu}+1}}{\partial s ^{d_{\mu}+1}}\Pi_{\geq \lambda_{K}}G_{\mu} (s) =0$, the last bracket simply becomes $e^{-\mu\tau}\Pi_{\geq \lambda_{K}}G_{\mu}(\tau)  $. This gives
\begin{align}\label{Eq;v-m;3}
	(\partial_{\tau} - L)v_m =&  \sum_{\mu \in \mathscr{S}\cap (0, \frac{m}{2n}]} e^{-\mu \tau}G_{\mu}(\tau) 
\end{align}
Subtracting from the equation of \eqref{Eq;v;3} and using the relation \eqref{estimate;hat-G;1}  and \eqref{Eq;v-m-v;5},  we get
\begin{align}\label{Eq;hat-v;2}
	(\partial_{\tau} - L) \hat{v} = \hat{G}
\end{align}

\noindent\textbf{Step3:} The estimate on error term $\hat{v}$. 

For each fixed $0\leq i \leq K-1$, we take the projection of the formula \eqref{formula;hat-v;13}, there is only one term $\displaystyle \Pi_{\lambda_i} \hat{v} = -\int_{\tau}^{\infty} e^{L(\tau-s)}\Pi_{ \lambda_i}\hat{G}(s)ds = -\int_{\tau}^{\infty} e^{-\lambda_i(\tau-s)}\Pi_{ \lambda_i}\hat{G}(s)ds $. Now we will estimate the decay. For each $\ell\geq 0$: 
\begin{align}
	||\Pi_{\lambda_i} \hat{v}||_{H^\ell} \leq  \int_{\tau}^{\infty} \big\Vert e^{-\lambda_i(\tau-s)}\hat{G}(s) \big\Vert_{H^\ell} ds  \leq \int_{\tau}^{\infty} e^{-\lambda_i(\tau-s)} O(e^{-\frac{m+1-\varepsilon}{2n}s}) ds = O(e^{-\frac{m+1-\varepsilon}{2n}\tau})
\end{align}
For the projection on higher eigenspaces, we apply the energy estimate to \eqref{Eq;hat-v;2}:
\begin{align}
	\frac{1}{2}\frac{d}{d\tau} ||\Pi_{\geq \lambda_K} \hat{v} ||_{H^\ell}^2 =& \langle L \Pi_{\geq \lambda_K} \hat{v}, \Pi_{\geq \lambda_K} \hat{v}\rangle_{H^\ell}  + \langle \Pi_{\geq \lambda_K} \hat{G}, \Pi_{\geq \lambda_K} \hat{v}\rangle_{H^\ell} \nonumber\\
	\leq& -\lambda_K||\Pi_{\geq \lambda_K} \hat{v}||_{H^\ell}^2  + || \hat{G}||_{H^\ell}||\Pi_{\geq \lambda_K} \hat{v}||_{H^\ell}
\end{align}
This in turn gives 
\begin{align}
	\frac{d}{d\tau} ||\Pi_{\geq \lambda_K} \hat{v} ||_{H^\ell} + \lambda_K  ||\Pi_{\geq \lambda_K} \hat{v} ||_{H^\ell} \leq ||\hat{G} ||_{H^\ell} = O(e^{-\frac{m+1-\varepsilon}{2n}\tau})
\end{align}
Using integrating factor and the fact that $ ||\Pi_{\geq \lambda_K} \hat{v} ||_{H^\ell}\rightarrow 0$ as $\tau \rightarrow\infty$, 
\begin{align}
	 ||\Pi_{\geq \lambda_K} \hat{v} ||_{H^\ell}  \leq &   e^{-\lambda_K\tau}\Big( ||\Pi_{\geq \lambda_K} \hat{v}(0)||_{H^\ell} + \int_{0}^{\tau} e^{\lambda_K s}\cdot O(e^{-\frac{m+1-\varepsilon}{2n}s}) ds \Big)\nonumber\\
	 \leq & C  e^{-\lambda_K\tau}  O\big(1+ e^{(\lambda_K -\frac{m+1-\varepsilon}{2n})\tau}\big)  
\end{align}
Since $\lambda_K \geq \frac{m+1}{2n}$, we get
\begin{align}
	||\Pi_{\geq \lambda_K} \hat{v} ||_{H^\ell}  = O(e^{-\frac{m+1-\varepsilon}{2n}\tau}) 
\end{align}
Adding them up we get for all $\ell \geq 0$:
\begin{align}
	||\hat{v}||_{H^\ell} \leq C \Big(||\Pi_{\geq \lambda_K} \hat{v} ||_{H^\ell} + \sum_{i=0}^{K-1} ||\Pi_{ \lambda_i} \hat{v} ||_{H^\ell} \Big) = O(e^{-\frac{m+1-\varepsilon}{2n}\tau}) 
\end{align}
By Sobolev embedding, this upgrades to
\begin{align}\label{estimate;hat-v;3}
	||\hat{v}||_{C^\ell} = O(e^{-\frac{m+1-\varepsilon}{2n}\tau}) 
\end{align} 
for all $\ell \geq 0$.
 
\noindent\textbf{Step4:} $v_m$ has the desired expansion.

Now we return to $v_m$, we will show that $v_m$ can be written as 
\begin{align} 
	 v_m = \sum_{\mu \in \mathscr{S}\cap (0, \frac{m}{2n}]}e^{-\mu\tau} F_{\mu} 
\end{align}
where $\{F_{\mu}\}_{\mu \in \mathscr{S}\cap (0, \frac{m}{2n}]} \subset \mathscr{F}$. 
We will decompose $v_m$ into several terms and do this one by one. 

\noindent  (1):
We set $F^{(1)}_{\mu} = \Pi_{\lambda_i} v(0) \in E_i \subset  \mathscr{F}$ if $\mu = \lambda_i$ for some $0\leq i \leq K-1$ and set  $F^{(1)}_{\mu} = 0$ otherwise. Then 
\begin{align}\label{formula;v-m;15}
	e^{L\tau}\Pi_{< \lambda_K}v(0) = \sum_{i=0}^{K-1} e^{-\lambda_{i}\tau}\Pi_{\lambda_i}v(0) = \sum_{i=0,1} e^{-\lambda_i \tau} F^{(1)}_{\lambda_i}(\tau)  + \sum_{\mu \in \mathscr{S}\cap (0, \frac{m}{2n}]} e^{-\mu\tau} F^{(1)}_{\mu}(\tau)
\end{align}
\noindent (2): We can compute
 \begin{align*}
 	\int_0^{\infty} e^{L(\tau-s)}\Pi_{< \lambda_K}\hat{G}(s)ds  = \sum_{i=0}^{K-1} \int_0^{\infty} e^{-\lambda_{i}(\tau-s) }\Pi_{\lambda_i}\hat{G}(s)ds =\sum_{i=0}^{K-1}  e^{-\lambda_{i}\tau} \int_0^{\infty} e^{\lambda_{i}s }\Pi_{\lambda_i}\hat{G}(s)ds
 \end{align*}
We have the estimate:
\begin{align*}
	\Big\Vert \int_0^{\infty}  e^{\lambda_{i} s }\Pi_{\lambda_i}\hat{G}(s)ds\Big\Vert_{H^\ell} \leq \int_0^{\infty}  e^{\lambda_{i} s }||\hat{G}(s)||_{H^\ell}ds = \int_0^{\infty}O(e^{(\lambda_i -\frac{m+1-\varepsilon}{2n})s})ds <\infty
\end{align*}
This means that $ \int_0^{\infty} e^{\lambda_{i}s }\Pi_{\lambda_i}\hat{G}(s)ds$ is well-defined and belongs to $\mathscr{F}$. 

We set $F^{(2)}_{\mu} = \int_0^{\infty} e^{\lambda_{i}s }\Pi_{\lambda_i}\hat{G}(s)ds \in E_i \subset \mathscr{F}$ if $\mu = \lambda_i$ for some $0\leq i \leq K-1$ and set  $F^{(2)}_{\mu} = 0$ otherwise. Then 
\begin{align}\label{formula;v-m;16}
	\int_0^{\infty} e^{L(\tau-s)}\Pi_{< \lambda_K}\hat{G}(s)ds = \sum_{i=0,1} e^{-\lambda_i \tau} F^{(2)}_{\lambda_i}(\tau)  + \sum_{\mu \in \mathscr{S}\cap (0, \frac{m}{2n}]} e^{-\mu\tau} F_{\mu}^{(2)}
\end{align}

\noindent(3) By Proposition \ref{Prop;identity;2}, since $\lambda_{K-1} \leq \frac{m}{2n}, $ we have
\begin{align}\label{formula;v-m;17}
	\sum_{\mu \in \mathscr{S}\cap (0, \frac{m}{2n}]}\int_0^{\tau}e^{L(\tau-s)} \Pi_{<\lambda_K}\Big(e^{-\mu s}G_{\mu}(s)\Big)ds =& \sum_{i=0}^{K-1} \sum_{\mu \in \mathscr{S}\cap (0, \frac{m}{2n}]}\int_0^{\tau}e^{L(\tau-s)} \Pi_{\lambda_i}\Big(e^{-\mu s}G_{\mu}(s)\Big)ds \nonumber\\
	 =&\sum_{i=0,1} e^{-\lambda_i \tau} F^{(3)}_{\lambda_i}(\tau) + \sum_{\mu \in \mathscr{S}\cap (0, \frac{m}{2n}]}e^{-\mu\tau} F_{\mu}^{(3)}
\end{align}
for some $F_{\lambda_0}^{(3)}, F_{\lambda_1}^{(3)} \in \mathscr{F}$ and  $\{F_{\mu}^{(3)}\}_{\mu \in \mathscr{S}\cap (0, \frac{m}{2n}]} \subset \mathscr{F}$.
 
\noindent(4) We set for all $\mu\in \mathscr{S}\cap (0, \frac{m}{2n}]$:
\begin{align*}
	F_{\mu}^{(4)} = -\sum_{j=0}^{d_{\mu}} (L+\mu)^{-j-1}\partial_{\tau}^j\Pi_{\geq \lambda_{K}}G_{\mu}(\tau).
\end{align*}
They are elements of $\mathscr{F}$. Then 
\begin{align}\label{formula;v-m;18}
	-\sum_{\mu \in \mathscr{S}\cap (0, \frac{m}{2n}]}e^{-\mu\tau} \sum_{j=0}^{d_{\mu}} (L+\mu)^{-j-1}\partial_{\tau}^j\Pi_{\geq \lambda_{K}}G_{\mu}(\tau) = \sum_{\mu \in \mathscr{S}\cap (0, \frac{m}{2n}]}e^{-\mu\tau}  F_{\mu}^{(4)} \in \mathscr{E}.
\end{align}
Now we set for all $\mu\in \mathscr{S}\cap (0, \frac{m}{2n}]$:
\begin{align}
	F_{\mu}:= \sum_{\alpha=1}^{4} F_{\mu}^{(\alpha)}.
\end{align}
Then in view of formula \eqref{formula;v-m;15} - \eqref{formula;v-m;18} and the expression of $v_m$ in \eqref{formula;v-m;12}, we get
\begin{align}\label{formula;v-m;19}
	v_m =\sum_{\substack{i=0,1\\j=1,2,3}} e^{-\lambda_i \tau} F^{(j)}_{\lambda_i}(\tau)  + \sum_{\alpha=1}^{4} \sum_{\mu \in \mathscr{S}\cap (0, \frac{m}{2n}]}e^{-\mu\tau}  F_{\mu}^{(\alpha)} =  \sum_{\substack{i=0,1\\j=1,2,3}} e^{-\lambda_i \tau} F^{(j)}_{\lambda_i}(\tau)  + \sum_{\mu \in \mathscr{S}\cap (0, \frac{m}{2n}]}e^{-\mu\tau}  F_{\mu}.
\end{align}

By \eqref{Eq;v-m-v;5}, \eqref{estimate;hat-v;3} and the assumption that $v\rightarrow 0$ in $C^{\infty}$,  we get:
\begin{align}
	\lim_{\tau\rightarrow\infty}||v_m||_{C^0} \leq \lim_{\tau\rightarrow\infty}||v||_{C^0} + \lim_{\tau\rightarrow\infty}||\hat{v}||_{C^0}  =0
\end{align}
In view of \eqref{formula;v-m;19}  and the fact that $\lambda_0, \lambda_1 <0$, this forces
\begin{align}
	  \sum_{j=1}^3 F^{(j)}_{\lambda_i}(\tau) = 0,\quad i=0,1
\end{align}
This gives the refinement:
\begin{align}\label{formula;v-m;20}
	v_m = \sum_{\mu \in \mathscr{S}\cap (0, \frac{m}{2n}]}e^{-\mu\tau}  F_{\mu} 
\end{align}
By  \eqref{Eq;v-m-v;5}, \eqref{estimate;hat-v;3} and  \eqref{formula;v-m;20}, the first assertion follows. 

To prove the second assertion \eqref{Eq;F-G;2}, one can rewrite \eqref{Eq;v-m;3} as 
\begin{align}
	\sum_{\mu \in \mathscr{S}\cap (0, \frac{m}{2n}]} e^{-\mu\tau}(\partial_{\tau} - (L+\mu))F_{\mu}  =  \sum_{\mu \in \mathscr{S}\cap (0, \frac{m}{2n}]} e^{-\mu \tau}G_{\mu}(\tau) 
\end{align}
By Proposition \ref{Prop;E-F-derivative;2}, for all $\mu \in \mathscr{S}\cap(0,\frac{m}{2n}]$ we have $(\partial_{\tau} - (L+\mu))F_{\mu}\in \mathscr{F}$ and therefore equals $G_{\mu}$  by unique representation. This is exactly \eqref{Eq;F-G;2}. 
\end{proof}
\bigskip
\bigskip
Now we are in the right position to prove Theorem \ref{Thm;asymptotic;2}.
\begin{proof}[Proof of Theorem \ref{Thm;asymptotic;2}]

Let us fix an arbitrary $\varepsilon\in (0,\frac{1}{4n})$. The statement clearly holds with larger $\varepsilon$. 
 
Suppose that $v \not\equiv 0$, for otherwise the result is trivial. Then by the work of Strehlke \cite[Theorem 2.2]{str2} and \cite[Theorem 2.1]{str}, there exists an integer $k\geq 2$,  $H_k \in E_k$ such that for all $\ell\geq 0$ and $\sigma < \lambda_k$:
\begin{align}\label{asymptotic;v;2}
	||v(\tau) - e^{-\lambda_k \tau} H_k ||_{H^\ell} = O(e^{-\lambda_{k+1}\tau} + e^{-2\sigma \tau})
\end{align}
In particular, we have for all $\ell \geq 0$:
\begin{align}\label{asymptotic;v;3}
	||v(\tau)||_{H^\ell} = O(e^{-\frac{2}{2n}\tau}),  
\end{align}
This also implies that 
\begin{align}
	v\rightarrow 0 \text{ in } C^{\infty} \text{ as } \tau \rightarrow \infty
\end{align}
 
Since the smallest element in $\mathscr{S}$ is $\lambda_2= \frac{2}{2n}$, for $m=0,1$ the summation in the Theorem is empty. The result then holds by Sobolev embedding. Set
\begin{align}\label{Def;F;3}
	F_{\lambda_2} = F_{\frac{2}{2n}} :=\begin{cases}
		H_k &\text{ if }  k = 2\\
		0 &\text{ if } k\geq 3.
	\end{cases}
\end{align}
Let us take $\sigma = \lambda_k - \varepsilon$. Since $\lambda_{k+1} \geq\lambda_3 > \frac{3}{2n} $ and $2\sigma = 2\lambda_k - 2 \varepsilon\ge 2\lambda_2 - 2 \varepsilon >   \frac{3}{2n}$. Then \eqref{asymptotic;v;2} and Sobolev embedding imply that the result is true for $m=2$. 

We then proceed by induction. Let $m\geq 3$ and suppose that the result holds for $m-1$ with coefficients $F_{\mu}\in \mathscr{F}$ for $\mu \in \mathscr{S} \cap (0,\frac{m-1}{2n}]$.  
Define
\begin{align}\label{asymptotic;v-m;4}
	v_{m-1} =  \sum_{\substack{\mu  \in \mathscr{S} \\ \mu \leq \frac{m-1}{2n}}}e^{-\mu\tau}F_{\mu}(\tau) \quad  \text{ and } \quad R_{m-1} = v - v_{m-1}
\end{align}
Then the closed expression of $v_{m-1}$ and our induction assumption implies that for all $\ell\geq 0$:
\begin{align}
	||v_{m-1}||_{C^\ell} = O(e^{-\frac{2}{2n}\tau}) \quad  \text{ and } \quad  ||R_{m-1}||_{C^\ell} = O(e^{-\frac{m-\varepsilon}{2n}\tau})
\end{align}  
In particular, for any fixed $\ell\geq 0$, we have $||v||_{C^{\ell+2}} + ||v_{m-1}||_{C^{\ell+2}} + ||R_{m-1}||_{C^{\ell+2}}  \leq \varepsilon_1$ for all sufficiently large $\tau$. Then we apply Lemma \ref{Lem;N-diff;1} with $a= 0$ to get:
\begin{align}\label{estimate;N;11}
	||\mathcal{N}(v) - \mathcal{N}(v_{m-1})||_{C^\ell} \leq & C_\ell\cdot  \Big(||v(\tau)||_{C^{\ell+2}} + ||v_{m-1}(\tau)||_{C^{\ell+2}}\Big) ||v(\tau) - v_{m-1}(\tau)||_{C^{\ell+2}}  \nonumber \\
	=& O(e^{-\frac{2}{2n}\tau}) \cdot O(e^{-\frac{m-\varepsilon}{2n}\tau}) =  O(e^{-\frac{m+2-\varepsilon}{2n}\tau}) 
\end{align}
Next, we apply Lemma \ref{Lem;N-tail;1} with $a= 0$ to get, for all sufficiently large $\tau$:
\begin{align}\label{estimate;N;12}
    ||\mathcal{N}(v_{m-1}) - \mathcal{N}_{\leq m}(v_{m-1})||_{C^\ell} =& ||\mathcal{N}_{> m }(v_{m-1})||_{C^\ell} \leq  C_{m,0,\ell}  ||v_{m-1}||_{C^{\ell+2}}^{m+1} = O(e^{-\frac{2m+2}{2n}\tau})
\end{align}

Since $v_{m-1}\in \mathscr{E}$, Lemma \ref{Lem;N-m;1} implies that $\mathcal{N}_{\leq m}(v_{m-1})\in \mathscr{E}$. Therefore, we can apply Definition  \ref{Def;function-class;1}  to define for all $\mu \in \mathscr{S}$:
\begin{align}\label{Def;G;11}
	G_{\mu} := [\mathcal{N}_{\leq m}(v_{m-1})]_{\mu}
\end{align}
Note that the definition of $\mathscr{E}$ (see Definition \ref{Def;function-class;1}) forces all but \textbf{finite} $G_{\mu}$ to vanish.  Since $\mathscr{S}\subset \frac{1}{2n}\mathbb{Z}$  we conclude that:
\begin{align}\label{estimate;N;13}
	\Big\Vert\mathcal{N}_{\leq m}(v_{m-1})-\sum_{\substack{\mu \in \mathscr{S} \cap (0,\frac{m}{2n}]}} e^{-\mu\tau} G_{\mu} \Big\Vert_{C^\ell}  = \Big\Vert \sum_{\substack{\mu  \in \mathscr{S}, \mu \geq \frac{m+1}{2n}}} e^{-\mu\tau} G_{\mu} \Big\Vert_{C^\ell}  = O(e^{-\frac{m+1-\varepsilon}{2n}\tau})
\end{align}
 
Adding \eqref{estimate;N;11}, \eqref{estimate;N;12} and \eqref{estimate;N;13} we get
\begin{align}
	\Big\Vert\mathcal{N}(v) - \sum_{\mu \in \mathscr{S} \cap (0,\frac{m}{2n}]} e^{-\mu\tau} G_{\mu} \Big\Vert_{C^\ell} = O(e^{-\frac{m+1-\varepsilon}{2n}\tau})
\end{align}
Now fix any $\varepsilon \in (0,1)$ and apply Lemma \ref{Lem;linear;1} with $\mathcal{N}(v)$ in place of $G$:
\begin{align}
	\big\Vert v - \sum_{\mu \in \mathscr{S} \cap (0,\frac{m}{2n}]} e^{-\mu\tau} \tilde{F}_{\mu} \big\Vert_{C^\ell} = O(e^{-\frac{m+1-\varepsilon}{2n}\tau})
\end{align}
where 
\begin{align}\label{Eq;F;5}
	\partial_{\tau} \tilde{F}_{\mu} - (L+\mu) \tilde{F}_{\mu} =   G_{\mu}
\end{align}
and $\tilde{F}_{\mu} \in \mathscr{F}$ for all $\mu \leq \frac{m}{2n}$ with $\mu\in \mathscr{S}$. It is clear that $\tilde{F}_{\mu}$ is not changed if we reduce $\varepsilon >0$.  

The result then holds with $m$. Comparing with induction hypothesis and the asymptotics,  we observe that $\tilde{F}_{\mu} = F_{\mu}$ for all $\mu \in \mathscr{S}\cap (0, \frac{m-1}{2n}]$. If $\frac{m}{2n} \in \mathscr{S}$, define $F_{\frac{m}{2n}} := \tilde{F}_{\frac{m}{2n}}$, otherwise define $F_{\frac{m}{2n}} := 0$. Then the induction step only gives one new element $F_{\frac{m}{2n}}$.  Thus our induction gives a single sequence $\{ F_{\mu} \}_{\mu \in \mathscr{S}} \subset \mathscr{F}$. 
The uniqueness of $\{ F_{\mu} \}_{\mu \in \mathscr{S}} \subset \mathscr{F}$ again follows by comparing asymptotics.

Finally,  we plug \eqref{asymptotic;v-m;4} into \eqref{Def;G;11}, then using uniqueness of $F_{\mu}$, the equation \eqref{Eq;F;5} is exactly the equation \eqref{Eq;F;3} for all $\mu\in \mathscr{S}$. For $m\geq 2$ with $\frac{m}{2n}\not\in \mathscr{S}$, we have defined $F_{\frac{m}{2n}} = 0$ and the RHS of \eqref{Eq;F;3} is 0 since $\mathcal{N}_{\leq m}(v_{m-1})\in \mathscr{E}$. Therefore \eqref{Eq;F;3} still holds. This completes the proof. 
\end{proof}

\begin{remark}\label{rem;induction}
    The uniqueness of $\{ F_{\mu} \}_{\mu \in \mathscr{S}}$ and our induction process implies that \eqref{Eq;F;5} holds for $F_{\mu}$. Namely for all $\mu\in \mathscr{S}$:
\begin{align}\label{Eq;F;6}
	\partial_{\tau} {F}_{\mu} - (L+\mu) {F}_{\mu} =   [\mathcal{N}_{\leq m}(v_{m-1})]_{\mu}
\end{align}
for all $m\in \mathbb{Z}$ with $\frac{m}{2n}\ge \mu$.
\end{remark}

We finish this section by showing that Theorem \ref{Thm;asymptotic;2} continues to hold if one incorporates time derivatives. Theorem \ref{t:intro graph} is a direct consequence of the following Corollary:
\begin{corollary}\label{Cor;asymptotic;3}
	Suppose that $v$ solves \eqref{Eq;RMCF;2} and $v \rightarrow 0$ in $C^{\infty}$ as $\tau \rightarrow \infty$. Then there exists a unique family $\{F_{\mu}\}_{\mu \in \mathscr{S}}\subset \mathscr{F}$ such that for all integers $m, \ell, a\geq 0$ and $\varepsilon \in (0,1)$,   
\begin{align}
	\Big\Vert \partial_{\tau}^a \Big(v - \sum_{\substack{\mu  \in \mathscr{S} \\ \mu \leq \frac{m}{2n}}}e^{-\mu\tau}F_{\mu}(\tau) \Big)\Big\Vert_{C^\ell(\Sigma)} = O(e^{-\frac{m+1-\varepsilon}{2n}\tau})
\end{align}
\end{corollary}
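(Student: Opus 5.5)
We argue by induction on $a$, using the equation \eqref{Eq;RMCF;2} to trade each time derivative for spatial derivatives. Fix $m,\ell,\varepsilon$. The family $\{F_\mu\}$ is the one produced by Theorem \ref{Thm;asymptotic;2}; uniqueness is inherited from that theorem, since the $a=0$ case of the corollary is exactly the theorem. Write $v_m=\sum_{\mu\in\mathscr S,\mu\le \frac{m}{2n}}e^{-\mu\tau}F_\mu$ and $R_m=v-v_m$. The case $a=0$ gives $\|R_m\|_{C^\ell}=O(e^{-\frac{m+1-\varepsilon}{2n}\tau})$ for every $\ell$.

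\textbf{Equation for $R_m$.} By Remark \ref{rem;induction}, for each $\mu\le\frac{m}{2n}$ we have $\partial_\tau F_\mu-(L+\mu)F_\mu=[\mathcal N_{\le m}(v_{m-1})]_\mu$. Since $v_m$ and $v_{m-1}$ differ only by the term $e^{-\frac{m}{2n}\tau}F_{\frac m{2n}}$, and $\mathcal N_{\le m}$ is at least quadratic, we have $[\mathcal N_{\le m}(v_{m-1})]_\mu=[\mathcal N_{\le m}(v_m)]_\mu$ for $\mu\le\frac m{2n}$. Summing gives
\begin{equation*}
(\partial_\tau-L)v_m=\sum_{\mu\in\mathscr S,\mu\le\frac m{2n}}e^{-\mu\tau}[\mathcal N_{\le m}(v_m)]_\mu .
\end{equation*}
Subtracting this from \eqref{Eq;RMCF;2} yields
\begin{equation*}
\partial_\tau R_m = LR_m + \big(\mathcal N(v)-\mathcal N(v_m)\big)+\mathcal N_{>m}(v_m)+\sum_{\mu>\frac m{2n}}e^{-\mu\tau}[\mathcal N_{\le m}(v_m)]_\mu .
\end{equation*}

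\textbf{Induction on $a$.} Suppose that for all $j\le a$ and all $\ell$ we have $\|\partial_\tau^jR_m\|_{C^\ell}=O(e^{-\frac{m+1-\varepsilon}{2n}\tau})$. We apply $\partial_\tau^a$ to the equation above and estimate each term in $C^\ell$.
\begin{itemize}
\item $\partial_\tau^a LR_m$ is bounded by $\|\partial_\tau^aR_m\|_{C^{\ell+2}}$, which has the required decay by hypothesis.
\item For the difference $\mathcal N(v)-\mathcal N(v_m)$ we use Lemma \ref{Lem;N-diff;1} with $v_1=v$, $v_2=v_m$. The constant $B$ is finite because $\partial_\tau^jv=\partial_\tau^jv_m+\partial_\tau^jR_m$, the terms $\partial_\tau^jv_m$ are explicit elements of $\mathscr E$, and the $\partial_\tau^jR_m$ are controlled by the hypothesis. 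The lemma bounds the term by $O(e^{-\frac{2}{2n}\tau})\cdot O(e^{-\frac{m+1-\varepsilon}{2n}\tau})$.
\item $\mathcal N_{>m}(v_m)$ is handled by Lemma \ref{Lem;N-tail;1}; it is $O(e^{-\frac{2(m+1)}{2n}\tau})$ up to polynomial factors in $\tau$.
\item The finite sum over $\mu>\frac m{2n}$ lies in $\mathscr E$, so its time derivatives are $O(\tau^Ne^{-\frac{m+1}{2n}\tau})$.
\end{itemize}
Together these give $\|\partial_\tau^{a+1}R_m\|_{C^\ell}=O(e^{-\frac{m+1-\varepsilon}{2n}\tau})$, where polynomial factors in $\tau$ are absorbed into $\varepsilon$. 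This closes the induction.

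\textbf{Main obstacle.} The delicate point is the bookkeeping in the first step: verifying that the $\le\frac m{2n}$ part of $\mathcal N_{\le m}(v_m)$ matches exactly the forcing in the equations for the $F_\mu$. A slip here would leave a non-decaying term of order $e^{-\frac{m}{2n}\tau}$.

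If that identification proves awkward, we will bypass it. Instead, apply the $a=0$ result at order $m+2n$ or higher, and use it only to cross the extra shells of $\mathscr S$ via the explicit terms $e^{-\mu\tau}F_\mu$ with $\frac m{2n}<\mu\le\frac{m+2n}{2n}$. Their time derivatives are explicit and decay at rate at least $\frac{m+1}{2n}$ up to polynomial factors. This reduces the problem to the same structure with a remainder that already has surplus decay, and the lemmas above are then applied to that remainder.
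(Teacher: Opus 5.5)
Your proposal is correct and follows the paper's proof essentially step for step. Like the paper, you run an induction on $a$ on the equation for $R_m$ obtained from Remark \ref{rem;induction}, and you control the four forcing terms by Lemma \ref{Lem;N-diff;1}, Lemma \ref{Lem;N-tail;1} and the $\mathscr E$-structure.

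The only difference is that you expand the nonlinearity around $v_m$ rather than $v_{m-1}$; the paper keeps $v_{m-1}$ and absorbs $v-v_{m-1}=R_m+e^{-\frac{m}{2n}\tau}F_{\frac{m}{2n}}$. Your identification $[\mathcal N_{\le m}(v_{m-1})]_\mu=[\mathcal N_{\le m}(v_m)]_\mu$ for $\mu\le\frac{m}{2n}$ is immediate, because every extra monomial contains the factor $e^{-\frac{m}{2n}\tau}F_{\frac{m}{2n}}$ together with at least one other factor of rate $\ge\frac{2}{2n}$. So the fallback plan is unnecessary.
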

 
 \begin{proof} 
 We shall take $\{ F_{\mu} \}_{\mu \in \mathscr{S}} \subset \mathscr{F}$ from Theorem \ref{Thm;asymptotic;2} and set for all $j\geq 0$:
 \begin{align}\label{asymptotic;v-m;6}
	  v_{j} =  \sum_{\substack{\mu  \in \mathscr{S} \cap (0,\frac{j}{2n}]}} e^{-\mu\tau}F_{\mu}(\tau) \quad  \text{ and } \quad R_{j} = v - v_{j}
\end{align}
 By direct computation, $||\partial_{\tau}^a(e^{-\mu\tau}F_{\mu}) ||_{C^\ell} = O(e^{-(\mu - \frac{\varepsilon}{2n})\tau})$. Therefore, if the conclusion holds for $m$, then it holds for $0,1,\cdots, m-1$. As such, we only prove the statement for $m\geq 2$, which is equivalent to proving 
 \begin{align}
 	\Big\Vert \partial_{\tau}^aR_m\Big\Vert_{C^{\ell}} = O(e^{-\frac{m+1-\varepsilon}{2n}\tau})
 \end{align}
By Remark \ref{rem;induction}  we have 
\begin{align}
	 \partial_{\tau} v_m - Lv_m =& \sum_{\mu  \in \mathscr{S} \cap (0,\frac{m}{2n}]} e^{-\mu\tau}\big(\partial_{\tau} F_{\mu} - (L+\mu)F_{\mu}) \\
	 =& \sum_{\mu  \in \mathscr{S} \cap (0,\frac{m}{2n}]} e^{-\mu\tau} [\mathcal{N}_{\leq m}(v_{m-1})]_{\mu}  
\end{align}

Subtracting from the equation of $v$ we get the equation for $R_m$:
\begin{align}\label{Eq;R;2}
	\partial_{\tau} R_m - L R_m = \mathcal{N}(v) - \sum_{\mu  \in \mathscr{S} \cap (0,\frac{m}{2n}]} e^{-\mu\tau} [\mathcal{N}_{\leq m}(v_{m-1})]_{\mu}   = I + II + III
\end{align}
where 
\begin{align}
	& I = \mathcal{N}(v) - \mathcal{N}(v_{m-1}), \quad II = \mathcal{N}(v_{m-1}) - \mathcal{N}_{\leq m}(v_{m-1})  = \mathcal{N}_{> m}(v_{m-1}) \\
	& III = \mathcal{N}_{\leq m}(v_{m-1}) - \sum_{\mu  \in \mathscr{S} \cap (0,\frac{m}{2n}]} e^{-\mu\tau} [\mathcal{N}_{\leq m}(v_{m-1})]_{\mu}
\end{align}
We will proceed by induction on $a$. First, Theorem \ref{Thm;asymptotic;2} already establishes the case without time derivatives for any $\ell\geq 0$. This establishes the induction base.
For $a\geq 1$, assume that the result holds for $0,\cdots, a-1$ and any $\ell\geq 0$, we wish to establish the result for $a$ and any $\ell\geq 0$. 

Let us fix arbitrary $\ell \geq 0$ from now.  We establish rough asymptotics first. Direct computation shows that
\begin{align}\label{estimate;v-m;3}
	\Big\Vert\partial_{\tau}^j v_m\Big\Vert_{C^{\ell+2}} \leq \sum_{\substack{\mu  \in \mathscr{S} \cap (0,\frac{m}{2n}]}}  e^{-\mu\tau} ||(\partial_{\tau} - \mu)^jF_{\mu}(\tau)||_{C^{\ell+2}} = O(e^{-\frac{2}{2n}\tau})
\end{align}
 for all $j\geq 0$, where we used that $F_{\frac{2}{2n}}$ is time-independent by  \eqref{Def;F;3} . The same estimate holds for $v_{m-1}$. Together with induction hypothesis we get for $j=0,\cdots, a-1$ that:
\begin{align}\label{estimate;v;4}
	\Big\Vert\partial_{\tau}^j v\Big\Vert_{C^{\ell+2}} \leq \Big\Vert\partial_{\tau}^j R_m\Big\Vert_{C^{\ell+2}}  + \Big\Vert\partial_{\tau}^j v_m\Big\Vert_{C^{\ell+2}}  =O(e^{-\frac{m+1-\varepsilon}{2n}\tau}) + O(e^{-\frac{2}{2n}\tau})  = O(e^{-\frac{2}{2n}\tau}) 
\end{align}
Therefore, we can find $\mathcal{T}_* \geq 0$ such that $\sum_{j=0}^{a-1} (||\partial_{\tau}^j v_{m-1}||_{C^{\ell+2}} + ||\partial_{\tau}^j v||_{C^{\ell+2}}) < \varepsilon_1$ for all $\tau \geq \mathcal{T}_*$. 

Next,  we take $\partial_{\tau}^{a-1}$ on \eqref{Eq;R;2}:
\begin{align}\label{Eq;R;3}
	\partial_{\tau}^a R_m = \partial_{\tau}^{a-1} L R_m + \partial_{\tau}^{a-1}(I+II + III) 
\end{align}
We will estimate the $C^\ell$ norm for each part in the RHS.

\noindent \textbf{(1)}
Since $L$ commutes with $\partial_{\tau}$, by induction hypothesis we get  
\begin{align}\label{estimate;11}
	 \Big\Vert\partial_{\tau}^{a-1} L R_m\Big\Vert_{C^\ell} = \Big\Vert L\Big(\partial_{\tau}^{a-1}   R_m\Big)\Big\Vert_{C^\ell} \leq  C_\ell \Big\Vert \partial_{\tau}^{a-1}   R_m\Big\Vert_{C^{\ell+2}}  = O(e^{-\frac{m+1-\varepsilon}{2n}\tau})
\end{align}

\noindent  \textbf{(2)}
Since $\sum_{j=0}^{a-1} (||\partial_{\tau}^j v_{m-1}||_{C^{\ell+2}} + ||\partial_{\tau}^j v||_{C^{\ell+2}}) < \varepsilon_1$ for all $\tau \geq \mathcal{T}_*$, we can apply Lemma \ref{Lem;N-diff;1} to get:
\begin{align}\label{estimate;remainder;11}
	\Big\Vert \partial_{\tau}^{a-1} I \Big\Vert_{C^\ell} =& \Big\Vert \partial_{\tau}^{a-1}\big(\mathcal{N}(v) -\mathcal{N}(v_{m-1})\big) \Big\Vert_{C^\ell}  \nonumber\\
	 \leq& C_{\ell,a,m} \sum_{j+k\leq a-1}\Big(\big\Vert \partial_{\tau}^jv\big\Vert_{C^{\ell+2}} + \big\Vert \partial_{\tau}^jv_{m-1}\big\Vert_{C^{\ell+2}}\Big)\big\Vert\partial_{\tau}^k(v - v_{m-1} ) \big\Vert_{C^{\ell+2}} 
\end{align}
Note that $v- v_{m-1} = R_m + e^{-\frac{m}{2n}\tau}F_{\frac{m}{2n}}$, using \eqref{estimate;v-m;3} and \eqref{estimate;v;4} and induction hypothesis we get:
\begin{align}\label{estimate;12}
	\Big\Vert \partial_{\tau}^{a-1} I \Big\Vert_{C^\ell}  \leq& C_{\ell,a,m} \sum_{j =0}^{a-1}\Big\Vert\partial_{\tau}^j\Big( R_m + e^{-\frac{m}{2n}\tau}F_{\frac{m}{2n}}\Big) \Big\Vert_{C^{\ell+2}} \cdot Ce^{-\frac{2}{2n}\tau} \nonumber \\
      \leq&  \Big( O (e^{-\frac{m+1-\varepsilon}{2n}\tau}) +  O (e^{-\frac{m-\varepsilon}{2n}\tau})  \Big)\cdot O\Big(e^{-\frac{2}{2n}\tau} \Big) \nonumber\\
      =& O(e^{-\frac{m+2-\varepsilon}{2n}\tau}) 
\end{align} 

\noindent  \textbf{(3)} Since $\sum_{j=0}^{a-1}||\partial_{\tau}^j v_{m-1}||_{C^{\ell+2}}   < \varepsilon_1$ for all $\tau \geq \mathcal{T}_*$, we can apply Lemma \ref{Lem;N-tail;1} and \eqref{estimate;v-m;3} to get:
\begin{align}\label{estimate;13}
	\Big\Vert \partial_{\tau}^{a-1} II \Big\Vert_{C^\ell}  \leq& C_{m, a,\ell}  \sum_{a_1+\cdots+a_{m+1} \leq a-1} \prod_{i=1}^{m+1} \big\Vert \partial_{\tau}^{a_i}v_{m-1}\big\Vert_{C^{\ell+2}} = O(e^{-\frac{2m+2}{2n}\tau})
\end{align} 

\noindent  \textbf{(4)} Since $v_{m-1} \in \mathscr{E}$, Lemma \ref{Lem;N-m;1} implies that $\mathcal{N}_{\leq m}(v_{m-1})\in \mathscr{E}$. Therefore, 
\begin{align}
	III =  \sum_{\mu  \in \mathscr{S} , \mu\geq \frac{m+1}{2n}} e^{-\mu\tau} [\mathcal{N}_{\leq m}(v_{m-1})]_{\mu} 
\end{align}
where the summation is finite and $[\mathcal{N}_{\leq m}(v_{m-1})]_{\mu} \in \mathscr{F}$ for all $\mu \in \mathscr{S}$. Consequently, 
\begin{align}\label{estimate;14}
	\Big\Vert \partial_{\tau}^{a-1} III \Big\Vert_{C^\ell} \leq C_{m,a,\ell,\varepsilon} e^{-\frac{m+1-\varepsilon}{2n}\tau}
\end{align}
Adding \eqref{estimate;11}, \eqref{estimate;12}, \eqref{estimate;13}, \eqref{estimate;14} and applying \eqref{Eq;R;3} we get
\begin{align}
	\Big\Vert \partial_{\tau}^a R_m \Big\Vert_{C^\ell} = O(e^{-\frac{m+1-\varepsilon}{2n}\tau})
\end{align} 
Since $\ell$ is arbitrary, we proved the statement for $a$ and any $\ell\geq 0$. This finishes the induction step. Therefore, the Theorem is proved.

 \end{proof}

\section{Refined asymptotic expansion in the plane}

In this section, we focus on the planar case $n=1$ and study the refined asymptotic expansion for the graph functions on $S^1(\sqrt{2})$ for rescaled curve shortening flow.

When $n=1$, the linearized operator $L$, the eigenvalues and their eigenspaces are
\begin{equation}
    L= \frac{1}{2} \partial_{\theta\theta} +1, \quad \lambda_k = \frac{k^2-2}{2}, \quad E_k = \{ Re(ce^{ik\theta}):c \in \mathbb{C}\}.
\end{equation}

For $m\ge 2 $, we define the two sets of all possible frequencies for the term decaying as $e^{-m\tau/2}$
\begin{align}\label{e:plane cI}
    \mathcal{I}(m) = \big\{ \sum_{j=1}^d    \epsilon_j k_j : d\geq 1, k_j \ge 2, \epsilon_j \in \{-1,1\} \text{ and } \sum_{j=1}^{d} (k_j^2-2) = m. \big\}\\
    \mathcal{I}'(m) = \big\{ \sum_{j=1}^d    \epsilon_j k_j : d\geq 2, k_j \ge 2, \epsilon_j \in \{-1,1\} \text{ and } \sum_{j=1}^{d} (k_j^2-2) = m. \big\}
\end{align}
Clearly, $\mathcal{I}'(m)\subset \mathcal{I}(m)$.

The following Lemma provides some fundamental properties for $\cI(m)$: 
\begin{lemma}\label{l:l=m mod 2}
    If $l\in \cI(m)$, then $|l|\le m$ and $l\equiv m \text{ mod } 2$.
\end{lemma}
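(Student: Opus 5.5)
Fix $l\in\cI(m)$ and write $l=\sum_{j=1}^d \epsilon_j k_j$ with $d\ge 1$, $k_j\ge 2$, $\epsilon_j\in\{-1,1\}$ and $\sum_{j=1}^d (k_j^2-2)=m$. Both claims follow termwise from elementary properties of the integers $k_j$, so the plan is simply to compare $\sum \epsilon_j k_j$ with $\sum (k_j^2-2)$ one summand at a time.

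\textbf{The bound $|l|\le m$.} By the triangle inequality, $|l|\le \sum_{j=1}^d k_j$, so it suffices to show $k\le k^2-2$ for every integer $k\ge 2$. This is equivalent to $k^2-k-2=(k-2)(k+1)\ge 0$, which holds for $k\ge 2$. Summing over $j$ gives
\begin{equation*}
|l|\le \sum_{j=1}^d k_j\le \sum_{j=1}^d (k_j^2-2)=m .
\end{equation*}

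\textbf{The parity $l\equiv m \pmod 2$.} Modulo $2$ we have $\epsilon_j k_j\equiv k_j$, since $-k\equiv k$. Also $k^2-2\equiv k^2\equiv k \pmod 2$, because $k^2-k=k(k-1)$ is even. Hence
\begin{equation*}
l\equiv \sum_{j=1}^d k_j\equiv \sum_{j=1}^d (k_j^2-2)=m \pmod 2 .
\end{equation*}

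There is no real obstacle here. The only point requiring care is using the hypothesis $k_j\ge 2$ in the inequality $k\le k^2-2$; it fails for $k=1$, which is exactly why the modes $k=0,1$ (negative eigenvalues) are excluded from $\mathscr S$ and from $\cI(m)$.
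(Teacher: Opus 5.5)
Your proof is correct and is the paper's argument: the triangle inequality together with $k_j\le k_j^2-2$ for $k_j\ge 2$ gives $|l|\le m$, and reducing $\epsilon_j k_j\equiv k_j\equiv k_j^2-2$ modulo $2$ gives the parity. Your closing remark that this inequality is ``exactly why'' the modes $k=0,1$ are excluded is overstated, since they are excluded because $v\to 0$ rules out the negative-eigenvalue modes, but it plays no role in the proof.
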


\begin{proof}
    Write $l = \sum_j \epsilon_j k_j$. Then $|l| \le \sum_j k_j \le \sum_j (k_j^2 -2) = m$ since $k_j \ge 2$. The second property follows from $m \equiv \sum_j k_j^2 \equiv \sum_j k_j \equiv \sum_j \epsilon_j k_j$ mod $2$. 
\end{proof}

\begin{lemma}\label{Lem;complex-product}
	If $F  = \sum_{l\in \mathcal{I}(m)} Re(c_{l}e^{i l \theta})$ and $G  = \sum_{l\in \mathcal{I}(m')} Re( c'_{l}e^{i l \theta})$, then 
	\begin{align}
		F G = \sum_{l \in \mathcal{I}'(m+ m')}Re(c''_l e^{il \theta})
	\end{align}
	for some $c''_l \in \mathbb{C}$,  $l\in \mathcal{I}'(m+ m')$.
\end{lemma}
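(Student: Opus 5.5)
The plan is to expand both factors into finitely many pure exponentials and use the product-to-sum identity. Each term combines as
\[
\operatorname{Re}(c_l e^{il\theta})\,\operatorname{Re}(c'_{l'}e^{il'\theta}) = \tfrac12 \operatorname{Re}\big(c_lc'_{l'}e^{i(l+l')\theta}\big)+\tfrac12\operatorname{Re}\big(c_l\overline{c'_{l'}}e^{i(l-l')\theta}\big).
\]
This follows from $\operatorname{Re}(a)\operatorname{Re}(b)=\tfrac12\operatorname{Re}(ab)+\tfrac12\operatorname{Re}(a\bar b)$.

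Thus $FG$ is a finite sum of terms $\operatorname{Re}(c e^{i(l\pm l')\theta})$ with $l\in\mathcal{I}(m)$ and $l'\in\mathcal{I}(m')$. Both sets are finite by Lemma \ref{l:l=m mod 2}. It therefore suffices to show that $l+l'$ and $l-l'$ both lie in $\mathcal{I}'(m+m')$. After that, one groups the coefficients of equal frequencies to obtain the $c''_l$.

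The combinatorial step is the following. Write $l=\sum_{j=1}^d\epsilon_jk_j$ with $\sum_j(k_j^2-2)=m$, and $l'=\sum_{j=1}^{d'}\epsilon'_jk'_j$ with $\sum_j(k_j'^2-2)=m'$, where $d,d'\ge1$.

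\begin{itemize}
\item Concatenating the two lists $(k_1,\dots,k_d,k'_1,\dots,k'_{d'})$ gives a list of length $d+d'\ge2$ whose entries are all $\ge2$.
\item The constraint is additive, so the concatenated list satisfies $\sum(k^2-2)=m+m'$.
\item Using signs $(\epsilon_1,\dots,\epsilon_d,\epsilon'_1,\dots,\epsilon'_{d'})$ yields the frequency $l+l'$.
\item Using signs $(\epsilon_1,\dots,\epsilon_d,-\epsilon'_1,\dots,-\epsilon'_{d'})$ yields $l-l'$. This is allowed because $\mathcal{I}$ is symmetric under flipping all signs.
\end{itemize}

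Hence both frequencies belong to $\mathcal{I}'(m+m')$, and the length condition $d\ge2$ is exactly what the product structure supplies.

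There is no serious obstacle here. The only care needed is this: if the frequency $0$ appears, then $\operatorname{Re}(c\,e^{0})$ is still of the stated form, and $0\in\mathcal{I}'(m+m')$ by the argument above. Likewise, if a negative frequency appears, it is either kept as is (it lies in $\mathcal{I}'$) or replaced via $\operatorname{Re}(ce^{-il\theta})=\operatorname{Re}(\bar c e^{il\theta})$. Either choice is consistent with the statement.
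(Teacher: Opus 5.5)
Your proposal is correct and matches the paper's proof: expand the product with $\operatorname{Re}(a)\operatorname{Re}(b)=\tfrac12\operatorname{Re}(ab)+\tfrac12\operatorname{Re}(a\bar b)$, then check that $l\pm l'\in\mathcal{I}'(m+m')$. Your concatenation argument, which produces a representation of length $d+d'\ge 2$ with free choice of signs, spells out the step the paper leaves as ``easy to check.''
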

\begin{proof}
	This follows from direct computations. 
	We can write 
	\begin{align}
		F = \sum_{l\in \mathcal{I}(m)} \frac{1}{2}(c_{l}e^{i l \theta} + \bar{c}_l e^{-il\theta}), \quad G = \sum_{l\in \mathcal{I}(m')} \frac{1}{2}(c'_{l}e^{i l \theta} + \bar{c'}_l e^{-il\theta})
	\end{align}
	Then the product gives
	\begin{align}
		FG =& \frac{1}{4}\sum_{l\in \mathcal{I}(m), l'\in \mathcal{I}(m')} c_l c'_{l'} e^{i(l+l')\theta} + \bar{c}_l \bar{c'}_{l'} e^{-i(l+l')\theta} + c_l \bar{c'}_{l'} e^{i(l-l')\theta} + \bar{c}_l c'_{l'} e^{i(-l+l')\theta}\nonumber\\
		=& \frac{1}{2}\sum_{l\in \mathcal{I}(m), l'\in \mathcal{I}(m')} Re(c_l c'_{l'} e^{i(l+l')\theta}) + Re(c_l \bar{c'}_{l'} e^{i(l-l')\theta}) 
	\end{align}
	For all $l\in \mathcal{I}(m)$ and $l'\in \mathcal{I}(m')$, it is easy to check that $l \pm l' \in \mathcal{I}'(m+m')$. The assertion then follows.  
\end{proof}

Next we prove a structure theorem for the coefficients of the asymptotic expansion for $v$ for $n=1$.

\begin{theorem}\label{t:planar graph expand}
    Suppose that $n=1$ and $v$ solves \eqref{Eq;RMCF;2} with $v \rightarrow 0$ in $C^{\infty}$ as $\tau \rightarrow \infty$. Take $\{F_{\mu}\}_{\mu\in \mathscr{S}}\subset \mathscr{F}$ from Theorem \ref{Thm;asymptotic;2}, then the following holds:
    \begin{enumerate}
        \item For $\mu<23/2$ or $\mu=12$, we have
        \begin{equation*}
            F_{\mu}=f_{\mu,0} \in \text{span}_{\RR} \{\cos(l\theta), \sin(l \theta) : l \in \cI(2\mu)\}.
        \end{equation*}
        
        \item We can decompose $F_{23/2} = f_{23/2,0} + \tau f_{23/2,1}$ satisfying
        \begin{equation*}
            f_{23/2,0}  \in \text{span}_{\RR} \{\cos(l\theta), \sin(l \theta) : l \in \cI(23)\} \text{ and } f_{23/2,1}  \in E_5.
        \end{equation*}
    \end{enumerate}
\end{theorem}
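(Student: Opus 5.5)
The plan is a strong induction on $m\ge 2$ over the rates $\mu=\frac m2\in\mathscr S$, using the recursive equation \eqref{Eq;F;3} of Theorem \ref{Thm;asymptotic;2} (equivalently Remark \ref{rem;induction}), which for $n=1$ reads
\begin{equation*}
\partial_\tau F_{\frac m2}-\Big(L+\frac m2\Big)F_{\frac m2}=\Big[\mathcal N_{\le m}(v_{m-1})\Big]_{\frac m2}.
\end{equation*}
The base case is $m=2$. By \eqref{Def;F;3}, $F_1$ is either $H_2\in E_2$ or $0$. In both cases it is time independent and has frequency $2\in\cI(2)$.

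The first ingredient is the rotational invariance of the rescaled curve shortening flow about the round circle $S^1(\sqrt2)$. Because of it, $\mathcal N(v,v_\theta,v_{\theta\theta})$ has Taylor coefficients independent of $\theta$. Hence each $\mathcal N_q$ is a constant-coefficient homogeneous polynomial of degree $q$ in $v,v_\theta,v_{\theta\theta}$. The second ingredient is that $\partial_\theta$ maps $\text{span}_{\RR}\{\cos l\theta,\sin l\theta\}$ to itself.

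Now assume the statement for all rates $<\frac m2$, with $m\le 24$. Expanding $\mathcal N_{\le m}(v_{m-1})$ produces products of at least two factors. Each factor has the form $e^{-\mu_i\tau}\partial_\theta^{j}F_{\mu_i}$, and a term contributes to rate $\frac m2$ only when $\sum\mu_i=\frac m2$. Iterating Lemma \ref{Lem;complex-product} on such a product shows that it lies in the span of frequencies in $\cI'(m)$.

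Next I check that the only $\tau$-dependent coefficient below rate $12$, namely $F_{23/2}$, never enters the right-hand side for $m\le 24$. Any product containing it has total rate at least $\frac{23}2+1>12$, since the smallest element of $\mathscr S$ is $1$. Consequently the right-hand side $g_m$ is time independent and $g_m\in\text{span}\{\cos l\theta,\sin l\theta:l\in\cI'(m)\}$.

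To solve the equation, I note that $L+\frac m2$ acts on frequency $l$ by multiplication by $\frac{m+2-l^2}{2}$. First consider the non-resonant case, where no $l\in\cI'(m)$ satisfies $l^2-2=m$. Then $(L+\frac m2)^{-1}$ (Proposition \ref{Prop;L-invertible}) gives the particular solution $-(L+\frac m2)^{-1}g_m$, which stays in the same frequency span. By Proposition \ref{Prop;unique-linear}, any other solution in $\mathscr F$ differs from it by an element of $E_k$ with $\lambda_k=\frac m2$, i.e. $k^2-2=m$. Such a $k$ belongs to $\cI(m)$, taking $d=1$. Hence $F_{m/2}=f_{m/2,0}$ lies in the span of $\cI(m)=\cI(2\mu)$.

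In the resonant case $g_m$ may have a component $\Pi_{\lambda_k}g_m\neq0$. Then one takes $F=f_0+\tau f_1$ with $f_1=\Pi_{\lambda_k}g_m\in E_k$, and $f_0=-(L+\frac m2)^{-1}(1-\Pi_{\lambda_k})g_m$ plus an element of $E_k$.

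The main step, and the only real obstacle, is the arithmetic check of exactly when resonance occurs for $m\le24$. The resonant values are $m=l^2-2\in\{2,7,14,23\}$, corresponding to $l=2,3,4,5$, and for each we must ask whether $l\in\cI'(m)$. The available building blocks $k^2-2$ are $2,7,14,23,\dots$, corresponding to $k=2,3,4,5$.
\begin{itemize}
\item For $m=2$ and $m=7$, there is no decomposition into $d\ge2$ blocks at all.
\item For $m=14$, the decompositions with $d\ge2$ are $7+7$, which yields the frequencies $0$ and $6$, and seven copies of $2$, which yields frequencies in $2+4\mathbb Z$. Neither contains $4$, so there is no resonance.
\item For $m=23$, the decomposition $7+8\cdot 2$ yields the frequencies $3+4\mathbb Z$, which contain $-5$. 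This gives $5\in\cI'(23)$, so genuine resonance is possible.
\end{itemize}
This case analysis produces the single term $\tau f_{23/2,1}$ with $f_{23/2,1}\in E_5$, while $f_{23/2,0}$ lies in the span of $\cI(23)$ by the non-resonant argument applied to the remainder.

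Finally, $m=24$ is not of the form $l^2-2$, so it is non-resonant. Together with the observation above that $F_{23/2}$ does not feed into rate $12$, this gives part (1) for $\mu=12$ as well.
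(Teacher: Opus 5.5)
Your proposal is correct and follows the paper's proof essentially step for step: induction via \eqref{Eq;F;3} using the constant Taylor coefficients of $\mathcal N$ and Lemma \ref{Lem;complex-product} to put the forcing in the $\cI'(m)$ span; the observation that the rate-$12$ forcing only involves $F_\mu$ with $\mu\le 11$, so $F_{23/2}$ never enters; and the resonance check $\cI'(7)=\emptyset$, $\pm4\notin\cI'(14)$, and a possible $E_5$ resonance at $m=23$ that produces the $\tau f_{23/2,1}$ term. Your explicit verification that $5\in\cI'(23)$ goes beyond what the paper writes out, but it is not needed, since the statement allows $f_{23/2,1}=0$.
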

\begin{proof}
We first aim to prove  
\begin{align}\label{formula;F;3}
	F_{m/2}(\theta) = \sum_{l\in \mathcal{I}(m)} Re(c_{m,l}e^{i l \theta}), \quad c_{m,l} \in \mathbb{C}
\end{align}
for $2\leq m \leq 22$ and $m =24$ by induction. 

By Definition \ref{Def;F;3}, $F_{1}$ is an element of $E_2$, which must be in the form of $Re(c_{2,2}e^{2i\theta})$ for some $c_{2,2}\in \mathbb{C}$.  This establishes the induction base.

For $3\leq m\leq 22$ and $m=24$, suppose that \eqref{formula;F;3} is established for $2,\cdots, m-2$.  Let  
\begin{align}
	 v_{m-1} = \sum_{\substack{\mu  \in \mathscr{S} \cap (0,\frac{m-1}{2}]}} e^{-\mu\tau}F_{\mu}(\tau) \quad  \text{ and } \quad G_{m/2} = [\mathcal{N}_{\leq m}(v_{m-1})]_{\frac{m}{2}} 
\end{align}

Direct computation gives the rescaled mean curvature flow equation over $S^1(\sqrt{2})$:
\begin{align*}
    \partial_{\tau}v = \frac{v_{\theta\theta}}{(\sqrt{2} + v)^2+v_\theta^2}
 -\frac{(\sqrt{2} + v)^2+2v_\theta^2}{(\sqrt{2} + v)\big[(\sqrt{2} + v)^2+v_\theta^2\big]}+\frac{(\sqrt{2} + v)}{2}.
\end{align*}
The linearization of the right-hand side at $v=0$ is $L$. The nonlinear error is
\begin{align*}
    \mathcal{N}(v,v_{\theta}, v_{\theta\theta})  = - \frac{2\sqrt{2}v+v^2 + v_{\theta}^2}{2\big[(\sqrt{2}+v)^2 + v_{\theta}^2\big]}v_{\theta\theta} - \frac{v^2}{2(\sqrt{2}+v)} - \frac{v_{\theta}^2}{(\sqrt{2}+v)\big[(\sqrt{2}+v)^2 + v_{\theta}^2\big]}
\end{align*}
with $\mathcal N(0,0,0) = D\mathcal N(0,0,0)= 0$. Since the coefficients in $\mathcal{N}$ have no $\theta$ dependence, all Taylor coefficients $C_{a,b,c}$ below are constants:

\begin{align}
	G_{m/2} =& \Big[\sum_{2\leq a+b+c \leq m}C_{a,b,c} v_{m-1}^a (\partial_{\theta} v_{m-1})^b (\partial_{\theta\theta} v_{m-1})^c \Big]_{m/2}
\end{align}
Applying the expansion of $v_{m-1}$ we get:
\begin{align}\label{formula;G;4}
	G_{m/2} =&  \sum_{q=2}^{m}  \sum_{\substack{b_j \in \{0,1,2\}  \\ \mu_1 + \cdots + \mu_q = m/2 \\ \mu_j \in \mathscr{S}}} C_{q,\mu_1,b_1,\cdots, \mu_q, b_q} \prod_{j=1}^q  \partial_{\theta}^{b_j}F_{\mu_j}
\end{align}
Note that the summation is empty when $q > m/2 $. But we still keep the summation upper bound as $m$ for notation convenience.

Since the smallest element in $\mathscr{S}$ is $1$, the summation is empty if $m=3$. Moreover, the summation condition $\mu_1 + \cdots + \mu_q = m/2$ with $\mu_j \in \mathscr{S}$ and $q\geq 2$ implies that each $\mu_j\leq \frac{m-2}{2}$. Then for each summand $\prod_{i=1}^q  \partial_{\theta}^{b_i}F_{\mu_i}$,  
we can apply the induction hypothesis \eqref{formula;F;3} and iterate Lemma \ref{Lem;complex-product} to get 
\begin{align*}
    \prod_{j=1}^q  \partial_{\theta}^{b_j}F_{\mu_j} = \prod_{j=1}^q \sum_{l \in \mathcal{I}(2\mu_j)} Re(c_{2\mu_j, l} \cdot (il)^{b_j}\cdot e^{i l\theta}).
\end{align*}If we iterate Lemma \ref{Lem;complex-product}, this product must be in the form of 
\begin{align}
	\displaystyle \prod_{j=1}^q  \partial_{\theta}^{b_j}F_{\mu_j} =  \sum_{l\in \mathcal{I}'(2\mu_1 +\cdots +2\mu_q)} Re(c'_l e^{il\theta}), \qquad c'_l \in \mathbb{C}
\end{align} 
Plugging  into \eqref{formula;G;4}, we conclude that there exists $G_{m,l}\in \mathbb{C}$, $l\in \mathcal{I}'(m)$ such that
\begin{align}\label{formula;G;5}
	G_{m/2} = \sum_{l\in \mathcal{I}'(m)} Re(G_{m,l}e^{il\theta})
\end{align}

Recall that 
\begin{align}\label{Eq;F;4}
	\partial_{\tau}F_{m/2} - \big(L+m/2\big)F_{m/2} = G_{m/2}
\end{align}

Note that the only $2\lambda_k$ in $[3, 22]\cup \{24\}$ are $7,14$. 
If $m \neq 7, 14$, then
\begin{align}\label{formula;F;4}
	F_{m/2} =\sum_{l \in \mathcal{I}'(m)} \frac{2}{l^2-m-2} Re(G_{m,l}e^{il\theta})
\end{align}
Indeed, one can directly check that RHS solves \eqref{Eq;F;4}. Then by Proposition \ref{Prop;unique-linear} this is the only possibility, thus must coincide with $F_{m/2}$. 

We shall examine the definition of $\mathcal{I}'(m)$ more closely in the remaining cases. 
Note that $\sum_{j=1}^{d}(k_j^2-2) = m$, $k_j \geq 2$ and $d\geq 2$ will force each $k_j^2-2 < m$. 

If $m=7$, the only possibly that $k_j\geq 2$ and $k_j^2-2 < 7$ is $k_j =2$. However, it is not possible to have $\sum_{j=1}^{d}(2^2-2) = 7$. Therefore, 
\begin{align}
	\mathcal{I}'(7) = \emptyset
\end{align}
By Proposition \ref{Prop;unique-linear} we have
 \begin{align}\label{formula;F;5}
	F_{7/2} = Re(c_{7,3}e^{3i\theta}) 
\end{align}
for some $c_{7,3}\in \mathbb{C}$.

If $m=14$, the only possibly that $k_j\geq 2$ and $k_j^2-2 < 14$ is $k_j =2,3$. Then we only have the following two possibilities:
\begin{itemize}
	\item $d = 7$ and $k_1 =\cdots = k_7 = 2$
	\item $d = 2$ and $k_1 = k_2 = 3$
\end{itemize} 
This gives 
\begin{align}
	\mathcal{I}'(14) = \{0,\pm 2,  \pm 6,   \pm 10,  \pm 14\}
\end{align}
In particular, $\pm 4\not\in \mathcal{I}'(14)$, therefore by Proposition \ref{Prop;unique-linear} we get:
\begin{align}\label{formula;F;6}
	F_7 =  \sum_{l \in \mathcal{I}'(14)} \frac{2}{l^2-16} Re(G_{14,l}e^{il\theta}) + Re(c_{14,4}e^{4i\theta})
\end{align}
for some $c_{14,4}\in \mathbb{C}$. Indeed, one can check that RHS solves \eqref{Eq;F;4}, while  Proposition \ref{Prop;unique-linear} implies that any other possibility differs by $Re(ce^{4i\theta})$ for some $c\in \mathbb{C}$, which is in the same form. 

Observe that \eqref{formula;F;4}, \eqref{formula;F;5}, \eqref{formula;F;6} are all time-independent and are in the form of \eqref{formula;F;3},  we conclude that the assertion \eqref{formula;F;3} holds for $m$, then the induction step is complete. 

So far, we have proved the result for $2\leq m\leq 22$ and $m=24$, it remains to study $m=23$. In this case, \eqref{formula;G;5}, \eqref{Eq;F;4} holds with $m=23$. Since $23/2 = \lambda_5$, by Proposition \ref{Prop;unique-linear} we conclude that
\begin{align}\label{formula;F;7}
	F_{23/2} =\sum_{\substack{l \in \mathcal{I}'(23) \\ l\neq \pm 5}} \frac{2}{l^2-25} Re(G_{23,l}e^{il\theta}) + \sum_{l=-5,5} \tau Re(G_{23,l}e^{il\theta}) + Re(c_{23,5}e^{5i\theta})
\end{align}
for some $c_{23,5}\in \mathbb{C}$. Indeed, one can check that RHS solves  \eqref{Eq;F;4}, while  Proposition \ref{Prop;unique-linear} implies that any other possibility must be in the same form.

The Theorem follows from \eqref{formula;F;3} and \eqref{formula;F;7} 
\end{proof}

\begin{corollary}\label{Cor:CSF graph expansion}

Let $f_{m/2,0}$ and $f_{23/2,1}$ be as in Theorem \ref{t:planar graph expand}. We have
\begin{equation}\label{e: CSF graph expansion}
 v(\tau)= \sum_{m=2}^{24}e^{-m\tau/2}f_{m/2,0}
       +\tau e^{-23\tau/2}f_{23/2,1}+R_{24}(\tau).
\end{equation}
Here $f_{3/2,0}=f_{5/2,0}=0$, and for every $\ell,a \geq 0$ and
$0< \varepsilon <1$,
\begin{equation} \label{e: CSF graph remainder}
 \|\partial_{\tau}^aR_{24}(\tau)\|_{C^{\ell}(\Sigma)}
       =O(e^{-(25-\varepsilon)\tau/2}).
\end{equation}

\end{corollary}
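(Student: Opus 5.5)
The plan is to read the corollary off Corollary~\ref{Cor;asymptotic;3} with $n=1$ and $m=24$, and then substitute the structure of the coefficients obtained in Theorem~\ref{t:planar graph expand}. With $n=1$ we have $\mathscr{S}\subset\frac12\mathbb{Z}$ and $\lambda_k=\frac{k^2-2}{2}$, so $\mathscr{S}\cap(0,12]\subset\{m/2:2\le m\le 24\}$. Following the convention of Theorem~\ref{Thm;asymptotic;2}, we set $F_{m/2}=0$ whenever $m/2\notin\mathscr{S}$. The truncated sum in Corollary~\ref{Cor;asymptotic;3} is then exactly $\sum_{m=2}^{24}e^{-m\tau/2}F_{m/2}(\tau)$. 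We define $R_{24}$ to be $v$ minus this sum. Corollary~\ref{Cor;asymptotic;3} with $m=24$ gives
\[
\|\partial_\tau^a R_{24}\|_{C^\ell}=O\big(e^{-(25-\varepsilon)\tau/2}\big)
\]
for all $a,\ell\ge0$ and $\varepsilon\in(0,1)$, which is \eqref{e: CSF graph remainder}.

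Next we identify the coefficients using Theorem~\ref{t:planar graph expand}. For every $m\in\{2,\dots,24\}$ with $m\ne 23$ we have $m/2<23/2$ or $m/2=12$. Item (1) of the theorem then shows that $F_{m/2}=f_{m/2,0}$ is time independent. In the case $m/2\notin\mathscr{S}$ we take $f_{m/2,0}=0$, which is consistent with item (1). For $m=23$, item (2) gives $F_{23/2}=f_{23/2,0}+\tau f_{23/2,1}$. Substituting these into the truncated sum produces exactly the right-hand side of \eqref{e: CSF graph expansion}.

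It remains to verify that $f_{3/2,0}=f_{5/2,0}=0$, which is the only point that needs a small check. By Theorem~\ref{t:planar graph expand}, $f_{\mu,0}$ lies in the span of $\cos(l\theta),\sin(l\theta)$ with $l\in\cI(2\mu)$. This reduces the claim to showing that $\cI(3)$ and $\cI(5)$ are empty. Since every $k_j\ge2$, each summand $k_j^2-2$ lies in $\{2,7,14,\dots\}$. A sum of such numbers that is at most $5$ must use only $k_j=2$, so it is even and cannot equal $3$ or $5$. Hence $\cI(3)=\cI(5)=\emptyset$, and both coefficients vanish. The same conclusion follows from $3/2,5/2\notin\mathscr{S}$, since every element of $\mathscr{S}$ below $7/2$ is an integer.

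No step here is a real obstacle. The substantive work was already carried out in Corollary~\ref{Cor;asymptotic;3} and Theorem~\ref{t:planar graph expand}. The only care needed is the bookkeeping: the index range $\mu\le m/(2n)$ must match $m\le24$, and for $\mu=12$ we must use the time independence from item (1), so that no $\tau$-dependence is hidden in $F_{12}$.
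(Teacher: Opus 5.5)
Your proposal is correct and matches the argument the paper intends. The paper states this corollary without a separate proof, as an immediate consequence of Corollary~\ref{Cor;asymptotic;3} with $n=1$, $m=24$, combined with the coefficient structure from Theorem~\ref{t:planar graph expand}. Your check that $\cI(3)=\cI(5)=\emptyset$, or equivalently $3/2,5/2\notin\mathscr{S}$, correctly accounts for the vanishing of $f_{3/2,0}$ and $f_{5/2,0}$.
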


\section{Arrival time expansion and optimal regularity}

We now transfer the higher-order asymptotic expansion in Corollaries \ref{Cor;asymptotic;3} and \ref{Cor:CSF graph expansion} for the graph function $v$ of the rescaled mean curvature flow $\bar M_{\tau}$ over $\mathbb{S}^n (\sqrt{2n})$ back to the arrival time function $U(x)$ near the extinction point. Recall that the arrival time function satisfies the equation in the viscosity sense 
\begin{equation}\label{e:arrival time}
    \Delta U - \frac{D^2U(DU,DU)}{|DU|^2} = -1 \text{ in } \Omega \subset \RR^{n+1}.
\end{equation}

Given a smooth convex mean curvature flow $M_t$ in $\RR^{n+1}$. Assume $(0,T)$ be the space-time extinction point. We consider the following rescaling: 
\begin{equation*}
    \tau = - \log(T-t), \quad s= \sqrt{T -t}=e^{-\tau/2}.
\end{equation*}
Then we can write the coordinates of a point on the hypersurface in the following form
\begin{equation}\label{e:x and v, s}
    x = s \big(\sqrt{2n} + v(\sqrt{2n} \omega, -2\log s) \big) \omega, \quad \omega\in \mathbb{S}^n(1). 
\end{equation}
For the graph function $v(\sqrt{2n}\omega, \tau) = v(\sqrt{2n}\omega, -2\log s)$, by the chain rule we have
\begin{equation}\label{e:partial s to tau}
    s \partial_s \big( v(\sqrt{2n}\omega, -2\log s) \big) = -2 \partial_{\tau} v (\sqrt{2n}\omega, -2\log s). 
\end{equation}

\subsection{Optimal regularity in the plane}

In this subsection, we mainly focus on the planar case $n=1$. The main result is to obtain the higher-order Taylor expansion of the arrival time function $U(x)$ near $0$, thus proving Theorem \ref{t:intro planar expansion}:

\begin{theorem}\label{t:arrival higher-order expand}
Let $U$ be the arrival time of a smooth closed convex curve shortening flow shrinking to $(0,T)$. There exist a small constant $r_0$, real homogeneous polynomials $P_d$ of degree $d$ with $4\le d\le 26$, and a real homogeneous polynomial $P_{25}^{\log}$ of degree $25$ such that the following holds for $|x| \le r_0$
    \begin{equation}\label{e:U plane expand}
        T - U(x) = \frac{|x|^2}{2} + \sum_{d=4}^{26} P_d(x) + P_{25}^{\log}(x) \log|x| + \cR(x)
    \end{equation}
    where $\cR(x)$ is the remainder term satisfying that for any $\varepsilon>0$ and any multi-index $\beta$
    \begin{equation}
        |D^{\beta} \cR(x)| \le C_{\beta,\varepsilon} |x|^{27-\varepsilon-|\beta|}.
    \end{equation}
\end{theorem}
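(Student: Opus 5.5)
Our plan is to invert the radial relation \eqref{e:x and v, s} with $n=1$ and feed in the expansion of Corollary \ref{Cor:CSF graph expansion}. In polar coordinates $x=r\omega$, the point $x$ lies on $M_t$ with $s=\sqrt{T-t}=\sqrt{T-U(x)}$ exactly when
\begin{equation*}
r = s\big(\sqrt2 + v(\sqrt2\omega,-2\log s)\big).
\end{equation*}
So $T-U(x)=s(r,\omega)^2$, where $s$ solves this implicit equation. By \eqref{e:partial s to tau} and the smallness of $\partial_\tau v$ from Corollary \ref{Cor;asymptotic;3}, the map $s\mapsto s(\sqrt2+v)$ has $s$-derivative $\sqrt2+v-2\partial_\tau v>0$ for small $s$. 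Hence $s=s(r,\omega)$ is well defined and smooth away from $r=0$.

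Inserting \eqref{e: CSF graph expansion}, we get
\begin{equation*}
r=\sqrt2 s+\sum_{m=2}^{24}s^{m+1}f_{m/2,0}(\omega)+2s^{24}\log(1/s)\,f_{23/2,1}(\omega)+s\,R_{24}.
\end{equation*}
Here $e^{-\tau/2}=s$ and $\tau=-2\log s$, so $\tau e^{-23\tau/2}$ becomes $-2s^{23}\log s$. We solve for $s$ as a series in $r$ by fixed-point iteration:
\begin{equation*}
s=\frac{r}{\sqrt2}\Big(1+\sum_{k}a_k(\omega)r^k+b(\omega)r^{23}\log r\Big)+\tilde{\mathcal R},
\end{equation*}
with $a_k$ smooth on the circle. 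Squaring gives $T-U=\frac{r^2}{2}+\sum_d r^d Q_d(\omega)+r^{25}\log r\,Q^{\log}(\omega)+\mathcal R$.

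The log term first appears at order $r^{25}$. A second log factor would only appear at order $r^{48}$, and products with the $r^{25}\log r$ term only enter at degree at least $27$. So only one $\log$ term survives through degree $26$, and the remainder is $O(r^{27-\varepsilon})$, coming from $sR_{24}$ and $s^{26}$. The absence of degree-$3$ and degree-$5$ terms comes from $f_{3/2,0}=f_{5/2,0}=0$, which kills $r^3$ and hence the $P_3$ slot. The expansion $r=\sqrt2 s+s^3f_{1,0}+\dots$ produces $r^4$ in $T-U$, consistent with starting at $d=4$.

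Next we check that each $r^dQ_d(\omega)$ is a homogeneous polynomial $P_d(x)$, and likewise for $P^{\log}_{25}$. By Theorem \ref{t:planar graph expand}, $f_{m/2,0}$ has frequencies $l\in\cI(m)$. Lemma \ref{l:l=m mod 2} gives $|l|\le m$ and $l\equiv m\pmod 2$. The coefficient of $r^d$ in $s^2$ is built from products of $f$'s with $\sum m_j = d-2$, so its frequencies $l$ satisfy $|l|\le d-2$ and $l\equiv d\pmod 2$. A function $r^d e^{il\theta}$ with $|l|\le d$ and $d-l$ even equals $(x_1+ix_2)^{l}|x|^{d-l}$ (or its conjugate), which is a homogeneous polynomial. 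The same applies to $f_{23/2,1}\in E_5$: the term $r^{25}\log r$ comes from $s^{25}\log s$ times frequency $5$, possibly multiplied by the leading factor, giving odd frequencies at most $5\le 25$. This bookkeeping is routine but must be tracked uniformly through the inversion. The cleanest way is to write $s^2/r^2$ as a function of $(r,\omega)$ and organize by powers of $r$ and $\log r$.

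The main obstacle is the derivative estimate $|D^\beta\mathcal R|\le C|x|^{27-\varepsilon-|\beta|}$. We will use the scaling-invariant form: it suffices to show $|(r\partial_r)^j\partial_\omega^k\mathcal R|\le C r^{27-\varepsilon}$. Since $D^\beta$ is a combination of $r^{-|\beta|}(r\partial_r)^j\partial_\omega^k$ with smooth angular coefficients, this implies the claim. Now $r\partial_r$ acting on functions of $s$ corresponds to $(r\partial_r s/s)\,s\partial_s$, and $s\partial_s=-2\partial_\tau$ by \eqref{e:partial s to tau}. Corollary \ref{Cor:CSF graph expansion} controls $\partial_\tau^aR_{24}$ in $C^\ell$ by $e^{-(25-\varepsilon)\tau/2}=s^{25-\varepsilon}$, for all $a,\ell$. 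Then:
\begin{enumerate}
\item Differentiate the implicit equation with respect to $r\partial_r$ and $\partial_\omega$, and induct on $j+k$. This shows the remainder $\tilde{\mathcal R}$ in $s$ satisfies $|(r\partial_r)^j\partial_\omega^k\tilde{\mathcal R}|\le Cr^{26-\varepsilon}$. The key input is that the linearization $\partial_s[s(\sqrt2+v)]$ is bounded below.
\item Transfer this bound to $s^2$, which gives $\mathcal R=O(r^{27-\varepsilon})$ with the same log-derivative bounds.
\end{enumerate}
The explicit terms in $\log r$ are handled directly, since $r\partial_r$ maps $r^{25}\log r$ to $r^{25}(25\log r+1)$, which stays within the allowed form.
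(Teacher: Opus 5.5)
Your proposal is correct and follows essentially the same route as the paper. You invert $r=s(\sqrt2+v)$ to finite order with a single $s^{24}\log s$ correction. You control the remainder in the scale-invariant ($O_*$) sense, using the lower bound on $\partial_s[s(\sqrt2+v)]$ together with $s\partial_s=-2\partial_\tau$. You then square and identify the polynomials through the weight and frequency-parity bookkeeping from Theorem~\ref{t:planar graph expand} and Lemma~\ref{l:l=m mod 2}.

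One small slip: the absence of a degree-$3$ term comes from the expansion of $v$ starting at $e^{-\tau}$, so there is no $s^2$ term in $r$ and hence no $r^2$ correction in $s$. By contrast, $f_{3/2,0}=0$ only kills the degree-$5$ coefficient, which the statement does not require anyway.
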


For any $n\ge 1$ and any smooth function $f(\omega,s)$ on $\mathbb{S}^n(1) \times (0,s_0)$, we write $f = O_*(s^q)$ if 
\begin{equation}
    \| (s\partial_s)^a f(\cdot, s)\|_{C^\ell(\mathbb{S}^n(1))} \le C_{a,\ell}s^q \text{ for any } a,\ell\ge 0.
\end{equation}

We prove an inverse formula Lemma that works for any dimension.

\begin{lemma}\label{l:inverse formula}
    Let $n\ge 1$ and $\omega \in \mathbb{S}^n(1)$. Suppose $r(\omega,s) = \sqrt{2n} s + O_*(s^{1+\alpha})$ for some $\alpha>0$. Then for sufficiently small $r>0$, there exists a unique inverse function $s = s(\omega,r)$ satisfying
    \begin{equation*}
        s(\omega,r) = \frac{r}{\sqrt{2n}} + O_*(r^{1+\alpha}).
    \end{equation*}
    Moreover, suppose there exists a smooth function $\tilde{s}>0$ for $r>0$ with $\tilde{s} = O_*(r)$ such that $r(\omega,\tilde{s}) - r = O_*(r^{\beta})$ for some $\beta>0$, then $s - \tilde{s} = O_*(r^{\beta})$.  
\end{lemma}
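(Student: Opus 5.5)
We plan to obtain the inverse by a contraction (fixed-point) argument in the variable $\sigma = s\sqrt{2n}/r$, treating $\omega$ as a parameter. After that we upgrade the remainder bounds to the $O_*$ class by differentiating the identity $r(\omega,s(\omega,r))=r$.

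\textbf{Step 1 (existence and uniqueness).} Write $r(\omega,s)=\sqrt{2n}\,s\,(1+g(\omega,s))$ with $g=O_*(s^\alpha)$. Since $s\partial_s r = \sqrt{2n}s(1+g+s\partial_s g)$ and $s\partial_s g=O(s^\alpha)$, we get $\partial_s r = \sqrt{2n}(1+O(s^\alpha))>0$ for $s<s_0$ small. Thus $s\mapsto r(\omega,s)$ is a strictly increasing smooth map of $(0,s_0)$ onto an interval $(0,r_1(\omega))$, with $r_1$ bounded below uniformly in $\omega$. Hence there is a unique inverse for $r$ small. The implicit function theorem gives smoothness jointly in $(\omega,r)$. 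From $r=\sqrt{2n}s(1+O(s^\alpha))$ we get $s\sim r/\sqrt{2n}$, and then
\[
s=\frac{r}{\sqrt{2n}}(1+g)^{-1}=\frac{r}{\sqrt{2n}}+O(r^{1+\alpha}).
\]

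\textbf{Step 2 (upgrade to $O_*$).} This is the main technical point: we must control $(r\partial_r)^a\nabla_\omega^\ell$ of $h(\omega,r):=s\sqrt{2n}/r-1$. The identity $r(\omega,s(\omega,r))=r$ becomes
\[
(1+h)\bigl(1+g(\omega,s)\bigr)=1 .
\]
Apply $r\partial_r$ and note that $r\partial_r\big|_\omega=(r\partial_r \log s)\, s\partial_s$ along the inverse map. We have $r\partial_r\log s = 1+ r\partial_r\log(1+h)$. Differentiating the identity gives
\[
r\partial_r h\,(1+g)+(1+h)(s\partial_s g)\bigl(1+r\partial_r\log(1+h)\bigr)=0 .
\]
This relation is linear in $r\partial_r h$ with coefficient $1+O(r^\alpha)$, so $r\partial_r h=O(r^\alpha)$. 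Angular derivatives are handled the same way: $\nabla_\omega$ of the composite produces $(\nabla_\omega g)(\omega,s)+(s\partial_s g)\nabla_\omega\log s$, and again we solve linearly for $\nabla_\omega h$.

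The full bound follows by induction on $a+\ell$. Each higher derivative of the identity is linear in the top-order derivative of $h$, with invertible coefficient $1+O(r^\alpha)$. The remaining terms are products of lower-order derivatives of $h$, all $O(1)$ by induction, with derivatives of $g$ evaluated at $s\sim r$, all $O(r^\alpha)$. This yields $h=O_*(r^\alpha)$, i.e.\ $s=r/\sqrt{2n}+O_*(r^{1+\alpha})$, since multiplication by $r$ preserves the $O_*$ classes. The bookkeeping uses the fact that $O_*(s^q)$ composed with $s=s(\omega,r)$ is $O_*(r^q)$, which is itself part of the induction.

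\textbf{Step 3 (comparison with $\tilde s$).} For the second claim, use the mean value formula
\[
r(\omega,\tilde s)-r(\omega,s)=(\tilde s-s)\int_0^1\partial_s r\bigl(\omega,s+t(\tilde s-s)\bigr)\,dt .
\]
The left side equals $r(\omega,\tilde s)-r=O_*(r^\beta)$. The integral is $\sqrt{2n}+O_*(r^\alpha)$, which is invertible with $O_*$ inverse; here we use that $\tilde s$, $s$, and hence the convex combinations are $O_*(r)$ and comparable to $r$. This comparability follows from positivity together with the first part, after shrinking $r$ if necessary. Dividing gives $s-\tilde s=O_*(r^\beta)$.

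The only delicate point is the derivative bookkeeping in Step 2 and the closure of $O_*$ under products, quotients by $1+O_*(r^\alpha)$, and composition with $s(\omega,r)$. We will state these closure properties first as elementary Leibniz and Faà di Bruno consequences.
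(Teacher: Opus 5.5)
Your Steps 1 and 2 follow the paper's argument. Existence and uniqueness come from monotonicity of $s\mapsto r(\omega,s)$, and the derivative bounds come from repeatedly differentiating $r(\omega,s(\omega,r))=r$ and inducting on the order. The paper first proves $|\partial_r^a\partial_\omega^\gamma s|\le Cr^{1-a}$ and then reads the remainder off $s-r/\sqrt{2n}=-E(\omega,s)/\sqrt{2n}$. Your normalized $h$ with $r\partial_r$ does the same bookkeeping in scale-invariant form, and the induction that is linear in the top-order derivative is sound.

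The gap is the comparability claim in Step 3. Positivity of $\tilde s$ and $\tilde s=O_*(r)$ give only $0<\tilde s\le Cr$. The first part of the lemma concerns $s$, not $\tilde s$. So nothing you cite yields $\tilde s\ge cr$.

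You do need that lower bound. Write $r(\omega,\sigma)=\sqrt{2n}\,\sigma+E(\omega,\sigma)$ and $\xi_t=s+t(\tilde s-s)$. Applying $(r\partial_r)^a$ to $\partial_sE(\omega,\xi_t)$ produces terms $\partial_s^{j+1}E(\omega,\xi_t)\prod_{i=1}^{j}(r\partial_r)^{k_i}\xi_t$. From the hypotheses you can only bound these by $C\xi_t^{\alpha-j}r^{j}$, which is $O(r^\alpha)$ only when $\xi_t\gtrsim r$.

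The lower bound can actually fail. For $\beta\le 1$ the hypotheses allow $\tilde s=r^2$, since then $r(\omega,\tilde s)-r=-r+O_*(r^2)=O_*(r^\beta)$.

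The fix is the paper's case split:
\begin{itemize}
\item If $\beta\le1$, there is nothing to prove, because $s,\tilde s=O_*(r)\subset O_*(r^\beta)$ for $r<1$.
\item If $\beta>1$, the hypothesis gives $r(\omega,\tilde s)\ge r/2$ for small $r$. Also $r(\omega,\sigma)\le 2\sqrt{n}\,\sigma$ for small $\sigma$, and $\tilde s\le Cr$ is small. Hence $\tilde s\ge r/(4\sqrt n)$, every $\xi_t$ is comparable to $r$, and your mean-value division goes through as in the paper.
\end{itemize}
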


\begin{proof}
We write $R(\omega,s) = \sqrt{2n}s + E(\omega,s)$ with $E \in O_*(s^{1+\alpha})$. Since $\partial_s R = \sqrt{2n} +O_*(s^{\alpha})$, we have $ \sqrt{n}s \le R \le 2\sqrt{n}s$ and $\sqrt{n} \le \partial_s R \le 2\sqrt{n}$ for sufficiently small $s$. Hence there exists a unique inverse $s(\omega,r)$ such that $R(\omega,s(\omega,r))= r$. It is smooth for $r>0$. Next we prove the derivative estimates for $s$.

We use local coordinates $\omega = (\omega^1,..., \omega^n)$ on the sphere. By the chain rule we have 
\begin{align*}
    \partial_r s &= \frac{1}{\partial_s R} = \frac{1}{ \sqrt{2n} + \partial_s E},\\
    \partial_{\omega^i}s &= -\frac{\partial_{\omega^i} R}{\partial_s R} = - \frac{\partial_{\omega^i} E}{\sqrt{2n} + \partial_s E}.
\end{align*}

The definition of $O_*$ gives $|\partial_s^j \partial_\omega^\gamma  E|\leq C_{j,\gamma}s^{1+\alpha-j}$ for any $j\ge 0$ and multi-index $\gamma$. Repeatedly differentiating $R(\omega, s(\omega,r))=r$ now gives, by induction on $a+|\gamma|$,
\begin{equation*}
    |\partial_r^a\partial_\omega^\gamma s|\leq C_{a,\gamma}r^{1-a}.
\end{equation*}

Apply the same chain rule to $s(\omega,r) - R(\omega,s)/\sqrt{2n}= - E(\omega,s(\omega,r))/\sqrt{2n}$ yields
\begin{equation*}
    |\partial_r^a\partial_\omega^\gamma (s - r/\sqrt{2n})|\leq C_{a,\gamma}r^{1+\alpha-a}.
\end{equation*}

This proves the first part.  

To prove the second part, if $\beta \le 1$, then the conclusion follows from $s, \tilde{s}=O_*(r)$. We assume $\beta>1$. Note by the fundamental theorem of calculus that
\begin{equation*}
    R(\omega,\tilde{s}) - r = (\tilde{s} - s) ( \sqrt{2n} + \int_0^1 \partial_sE(\omega, t\tilde{s} + (1-t)s) dt ).
\end{equation*}

Since $E\in O_*(s^{1+\alpha})$, in particular we have $\int_0^1 \partial_sE(\omega, t\tilde{s} + (1-t)s) dt = O_*(r^{\alpha}).$ By assumption we have $R(\omega,\tilde{s}) - r = O_*(r^{\beta})$, Therefore
\begin{equation*}
    \tilde{s} - s = \frac{O_*(r^{\beta})}{\sqrt{2n}+O_*(r^{\alpha})} = O_*(r^{\beta}).
\end{equation*}
This completes the proof.
\end{proof}

Next we apply Lemma \ref{l:inverse formula} with $n=1$ to prove Theorem \ref{t:arrival higher-order expand}.

\begin{proof}[Proof of Theorem \ref{t:arrival higher-order expand}]
    For any point $x$ on the curve shortening flow, by \eqref{e:x and v, s} we have
    \begin{equation}
        r(\omega,s) = s\big( \sqrt{2} + v(\sqrt{2} \omega, -2\log s) \big).
    \end{equation}
In view of the expansion of $v$, \eqref{e: CSF graph expansion}, we define
\begin{equation}
    r_0(\omega,s) = \sqrt{2}s + \sum_{m=2}^{24} s^{m+1} f_{m/2,0}(\sqrt{2}\omega). 
\end{equation}
By \eqref{e: CSF graph remainder} we have
\begin{equation*}
    \|(s\partial_s)^a [s R_{24}(\sqrt{2}\omega,-2\log s)] \|_{C^\ell} \le C_{a} s \sum_{j=0}^a \|\partial_{\tau}^j R_{24}\|_{C^\ell} \le C_{a,\ell,\varepsilon}s^{26 -\varepsilon}.
\end{equation*}
Therefore 
\eqref{e: CSF graph expansion} implies that 
\begin{equation}\label{e: r-r_0}
    r = r_0 - 2 s^{24} \log s f_{23/2,1}(\sqrt{2}\omega)+ O_*(s^{26-\varepsilon}).
\end{equation}

By Lemma \ref{l:inverse formula}, there exists its inverse $s=\frac{r}{\sqrt{2}} + O_*(r^{3})$.  

\textbf{Step 1: approximate $r_0$: $r_0(\omega,s_0) -r = O_*(r^{26})$.}

We first construct a finite inverse for $r_0$. For a polynomial in $r$, we denote its coefficient of $r^j$ by $[\cdot]_{r^j}$. For any $3 \le j \le 25$, we define successively
\begin{equation}\label{e:inverse s and h}
\begin{split}
    s_{0,j-1}(\omega,r) &= \frac{r}{\sqrt{2}} + \sum_{i=3}^{j-1}h_{i}(\omega) r^i \\
    h_j(\omega) &= - \frac{1}{\sqrt{2}} [r_0(\omega, s_{0,j-1}(\omega,r)) - r]_{r^j}.
\end{split}
\end{equation}

Note that there is no need to consider the quadratic term $h_2(\omega)r^2$ since 
\begin{equation}\label{e:finite appro base step}
    r_0(\omega, r/\sqrt{2}) - r = \sum_{m=2}^{24} \frac{f_{m/2,0}(\sqrt{2}\omega)}{2^{\frac{m+1}{2}}}r^{m+1} = O_*(r^3). 
\end{equation}

\textbf{Claim: } $r_0(\omega,s_{0,j})-r = O_*(r^{j+1})$. 

We prove the claim by induction. The base case is proved in \eqref{e:finite appro base step}. We now assume the claim holds up to $j-1$. Then we have
\begin{align*}
    &\quad r_0(\omega,s_{0,j-1} +h_jr^j) - r_0(\omega,s_{0,j-1}) \\
    &= \sqrt{2} h_j r^j +\sum_{m=2}^{24} f_{m/2,0} \sum_{a=1}^{m+1}\binom{m+1}{a} s_{0,j-1}^{m+1-a} h_{j}^a r^{aj} \\
    &= \sqrt{2} h_j r^j + O_*(r^{j+1}).
\end{align*}
In the last equality we use that $m\ge 2$ and $s_0 = O_*(r)$. 

By the definition of $h_j$ and induction hypothesis, we have $h_j r^j = -\frac{1}{\sqrt{2}}( r_0(\omega,s_{0,j-1})-r) + O_*(r^{j+1})$. Combined with the equation above, it implies that
\begin{equation*}
    r_0(\omega,s_{0,j}) = r_0(\omega,s_{0,j-1} +h_jr^j) = r + O_*(r^{j+1}).
\end{equation*}

This finishes the induction and proves the claim. 

Let $j=25$. The claim then implies that $r_0(\omega,s_0) -r = O_*(r^{26})$ where we set $s_0 = s_{0,25}$. 

\textbf{Step 2: approximate $r$: $r(\omega,\tilde{s}) - r  = O_*(r^{26-\varepsilon})$.}

The polynomial construction above only gives the finite-order approximation for $r_0$. Now we correct the error term for the function $r$, i.e. the term involving $f_{23/2,1}$ in \eqref{e: r-r_0}. 

We set the function $\tilde{s}$ defined as 
\begin{equation*}
    \tilde{s} = s_0 + \sqrt{2} (\frac{r}{\sqrt{2}})^{24} \log \frac{r}{\sqrt{2}} f_{23/2,1}(\sqrt{2}\omega). 
\end{equation*}

The goal is to estimate the approximation $r(\omega,\tilde{s}) - r$. We break it into two parts:
\begin{align*}
    r(\omega,\tilde{s}) - r = \big( r(\omega,\tilde{s}) - r_0(\omega,\tilde{s})\big) + \big( r_0(\omega,\tilde{s}) - r \big).
\end{align*}

For the first term, by definitions of $r$ and $r_0$ we have
\begin{align*}
    r(\omega,\tilde{s}) - r_0(\omega,\tilde{s}) &= -2 \tilde{s}^{24} \log \tilde{s} f_{23/2,1}(\sqrt{2}\omega)+ O_*(r^{26-\varepsilon}).
\end{align*}

Since $s_0 = \frac{r}{\sqrt{2}} + O_*(r^3)$, we have 
\begin{align*}
    \tilde{s}^{24} &= (\frac{r}{\sqrt{2}})^{24} + O_*(r^{26}) \\
    \log \tilde{s} &= \log( \frac{r}{\sqrt{2}}) + \log ( 1+ O_*(r^2)) = \log( \frac{r}{\sqrt{2}}) + O_*(r^2)\\
    \tilde{s}^{24} \log \tilde{s} &=  (\frac{r}{\sqrt{2}})^{24} \log( \frac{r}{\sqrt{2}}) + O_*(r^{26-\varepsilon}).
\end{align*}

This implies that
\begin{equation*}
    r(\omega,\tilde{s}) - r_0(\omega,\tilde{s}) = -2(\frac{r}{\sqrt{2}})^{24} \log( \frac{r}{\sqrt{2}})f_{23/2,1}(\sqrt{2}\omega) + O_*(r^{26-\varepsilon})
\end{equation*}

For the second term, using the binomial formula and $\tilde{s} - s_0 = O_*(r^{24-\varepsilon})$ we have
\begin{align*}
    r_0(\omega,\tilde{s}) - r &= r_0(\omega,\tilde{s}) - r_0(\omega, s_0) + O_*(r^{26}) \\
    &= \sqrt{2} (\tilde{s} - s_0) + \sum_{m=2}^{24} \tilde{s}^{m+1} f_{m/2,0}(\sqrt{2}\omega) - \sum_{m=2}^{24} s_0^{m+1} f_{m/2,0}(\sqrt{2}\omega) + O_*(r^{26}) \\
    &= \sqrt{2} (\tilde{s} - s_0) + O_*(r^{26-\varepsilon}).
\end{align*}

Combining all estimates we have the term involving $f_{23/2,1}$ canceled out:
\begin{equation*}
    r(\omega,\tilde{s}) - r  = O_*(r^{26-\varepsilon}). 
\end{equation*}
This completes the proof of step 2.

Therefore, by Lemma \ref{l:inverse formula}, we have $s - \tilde{s} = O_*(r^{26-\varepsilon})$.
\begin{equation}\label{e:inverse U expand}
\begin{split}
    T - U(x) &= s^2 = (\tilde{s} + O_*(r^{26-\varepsilon}))^2  = \tilde{s}^2 + O_*(r^{27-\varepsilon}) \\
    &= s_0^2 + 2\sqrt{2} (\frac{r}{\sqrt{2}})^{25} \log \frac{r}{\sqrt{2}} f_{23/2,1}(\sqrt{2}\omega) + O_*(r^{27-\varepsilon}). 
\end{split}
\end{equation}

Write $[s_0^2 ]_{r^d} = q_d(\omega)$. Then we have
\begin{equation*}
    s_0^2 = \frac{|x|^2}{2} + \sum_{d=4}^{26} r^d q_d(\omega) + O_*(r^{27}). 
\end{equation*}

We define
\begin{align*}
    P_d &= r^d q_d(\omega) \text{ for } 4\le d \le 26 \text{ and } d \neq 25 \\
    P_{25} &= r^{25}q_{25}(\omega) - \sqrt{2} \log2 (\frac{r}{\sqrt{2}})^{25}  f_{23/2,1}(\sqrt{2}\omega) \\
    P_{25}^{\log} &=  2\sqrt{2} (\frac{r}{\sqrt{2}})^{25}   f_{23/2,1}(\sqrt{2}\omega)
\end{align*}

It remains to prove that $r^{d}q_d(\omega)$ and $r^{25}f_{23/2,1}(\sqrt{2}\omega)$ are all polynomials.  

First, by Theorem \ref{t:planar graph expand}, we have $f_{23/2,1} \in E_5$. Hence $f_{23/2,1}$ is a linear combination of $\sin(5\theta), \cos(5\theta)$ where $\omega=(\cos \theta, \sin \theta)$ and thus $r^{25} f_{23/2,1}$ is a polynomial in $\RR^2$. 

Next prove that $r^{d}q_d(\omega)$ are all polynomials. 

Set $h_1 = 1/\sqrt{2}$ and $h_2=0$. Then we can write $s_{0} = \sum_{j=1}^{25} h_j r^j$. For $j\ge3$, by \eqref{e:inverse s and h}, we can expand $s_0$ to obtain the recursive formula for $h_j$
\begin{equation}\label{e:h_j expansion}
\begin{split}
    h_j(\omega) &= - \frac{1}{\sqrt{2}} \sum_{m=2}^{j-1}f_{m/2,0}(\sqrt{2}\omega) \big[ (\sum_{i=1}^{j-1}h_i(\omega) r^i )^{m+1} \big]_{r^j} \\
    &= - \frac{1}{\sqrt{2}} \sum_{m=2}^{j-1}f_{m/2,0}(\sqrt{2}\omega) \sum_{\substack{\sum_{k=1}^{m+1} a_k = j \\ 1\le a_k \le j-1}} \prod_{k=1}^{m+1} h_{a_k}(\omega).
\end{split}
\end{equation}

Using induction  we obtain that
\begin{equation*}
    h_j(\omega) \in \text{span}\{ \prod_{k=1}^N f_{m_k/2,0}(\sqrt{2} \omega) : N \ge 1, m_k \le 24,   \sum_k m_k = j-1 .\}
\end{equation*}

Indeed, the formula $h_3 = - \frac{1}{4}f_{1,0}(\sqrt{2}\omega)$ finishes the base step. Then note that in \eqref{e:h_j expansion}, we have $ m+ \sum_{k=1}^{m+1} (a_k -1)  = (\sum_{k=1}^{m+1} a_k) -1 = j-1$. This completes the induction.

Consequently, we obtain the expansion for $s_0^2$:
\begin{equation*}
    q_d(\omega) = \sum_{j+k=d}  h_{j} h_k \in \text{span}\{ \prod_{k=1}^N f_{m_k/2,0}(\sqrt{2} \omega) : N \ge 1, m_k \le 24,   \sum_k m_k = d-2.\}
\end{equation*}

To prove that $r^{d}q_d$ is a polynomial, it suffices to prove that each $ r^d \prod_{k=1}^N f_{m_k/2,0}$ is a polynomial when $N\ge1, m_k\le 24, \sum_k m_k = d-2$.

By Theorem \ref{t:planar graph expand}, each $f_{m_k/2,0}$ is a linear combination of $\sin(l \theta),\cos(l \theta)$ where $\omega=(\cos \theta,\sin \theta)$, with $l\in \cI(m_k)$ with $\cI$ defined as \eqref{e:plane cI}. Combined with Lemma \ref{l:l=m mod 2}, each term in $r^d q_d(\omega)$ is of the following form
\begin{equation*}
    r^{d} \prod_j \cos(l_j \theta) \sin(\tilde{l}_j \theta), \quad \sum_j (|l_j|+|\tilde{l}_j|) \le d-2 \text{ and } d \equiv \sum_j (l_j+\tilde{l}_j) \text{ mod } 2.
\end{equation*}
This implies that $r^dq_d(\omega)$ is a polynomial in $\RR^2$ by converting the polar coordinates back to Euclidean coordinates. 

Therefore, by \eqref{e:inverse U expand}, we have
\begin{equation*}
    T - U(x) = \frac{|x|^2}{2} + \sum_{d=4}^{26} P_d(x) + P_{25}^{\log}(x) \log|x| + \cR(x)
\end{equation*}
with $\cR(r \omega) \in O_*(r^{27-\varepsilon})$ for any $\varepsilon>0$. Writing $x=(x_1,x_2)=r\omega$, we use
\begin{equation}\label{e:O* to Euclidean}
    \partial_{x_1} = \frac{1}{r}( \cos \theta r \partial_r - \sin \theta \partial_{\theta}), \quad  \partial_{x_2} = \frac{1}{r}( \sin \theta r \partial_r + \cos \theta \partial_{\theta}).
\end{equation}
Repeated differentiation implies that $|D^{\beta}\cR (x)| \le C_{\beta,\varepsilon} |x|^{27-\varepsilon-|\beta|}$ for any $\beta$ and $\varepsilon>0$. Hence the proof of Theorem \ref{t:arrival higher-order expand} is completed.
\end{proof}

A direct consequence of Theorem \ref{t:arrival higher-order expand} is that $U$ is $C^{24,\alpha} $ for any $0<\alpha<1$.

\begin{corollary}\label{c:C24 reg}
    Let $U$ be an arrival time function in some convex domain $\Omega \subset \RR^2$. Then $U \in C^{24,\alpha}$ for any $0<\alpha<1$. More precisely, for every compact $K \subset \Omega$ there exists some constant $C_K$ such that for any $x,y\in K$ and $x\neq y$ 
    \begin{equation}
        |D^{24}U(x) - D^{24}U(y) | \le C_K|x-y|\cdot (1+ \big|\log|x-y| \big|).
    \end{equation}
    Moreover, if $P_{25}^{\log} \equiv 0$ in the expansion of $U$ \eqref{e:U plane expand}, then we have $U \in C^{26,\alpha}$ for any $0<\alpha<1$.
\end{corollary}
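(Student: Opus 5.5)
Away from the extinction point the argument is classical. For a smooth closed convex curve shortening flow, $DU\neq0$ at every point of $\Omega\setminus\{x_0\}$, where $x_0$ is the point at which the flow shrinks at time $T$. So by the implicit function theorem $U$ is smooth there, being the inverse of the smooth flow map. For a compact $K\subset\Omega$ we translate $x_0$ to $0$ and take $r_0$ as in Theorem \ref{t:arrival higher-order expand}. On $K\setminus B_{r_0/2}$ the function $U$ is smooth, with all derivatives bounded, and both claimed estimates follow from the mean value theorem. It therefore suffices to estimate $D^{24}U$ on $B_{r_0}$, and only pairs $x,y$ with at least one point near $0$ need care.

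On $B_{r_0}$ we use \eqref{e:U plane expand}. The terms $|x|^2/2$ and $P_d$ are polynomials, hence smooth. It remains to handle $\Phi(x)=P_{25}^{\log}(x)\log|x|$ and $\cR$.

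For $\Phi$, homogeneity gives the structure of its derivatives. By the Leibniz rule, $D^{24}\Phi$ is a sum of two kinds of terms. The first is $D^{24}P_{25}^{\log}\cdot\log|x|$, where $D^{24}P_{25}^{\log}$ is linear in $x$. The second kind consists of functions homogeneous of degree $1$ that are smooth on $\RR^2\setminus\{0\}$: each $D^{j}\log|x|$ with $j\ge1$ is homogeneous of degree $-j$ and is multiplied by the degree-$(j+1)$ polynomial $D^{24-j}P_{25}^{\log}$. A degree-$1$ homogeneous function smooth away from $0$ is globally Lipschitz; this follows by integrating its bounded gradient along paths that avoid the origin, with length at most $\pi|x-y|/2$. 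For the linear term $\ell(x)\log|x|$ we have $|D(\ell\log|x|)|\le C(1+|\log|x||)$. To estimate $|f(x)-f(y)|$ we split into cases. If $|x-y|\ge\frac12\max(|x|,|y|)$, both $|f(x)|$ and $|f(y)|$ are bounded by $C|x-y|(1+|\log|x-y||)$, since $t(1+|\log t|)$ is increasing for small $t$. Otherwise we integrate the gradient along the segment, which stays at distance $\gtrsim\max(|x|,|y|)\ge|x-y|$ from $0$, and the gradient bound there is $C(1+|\log|x-y||)$. Together these give the log-Lipschitz modulus for $D^{24}\Phi$.

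For $\cR$ we take $\varepsilon=1/2$. Then $|D^{25}\cR|\le C|x|^{3/2}$ and $|D^{24}\cR|\le C|x|^{5/2}$. Applying the same two-case argument (or just the mean value theorem on convex $B_{r_0}$, since $D^{25}\cR$ is bounded and $D^{24}\cR$ extends continuously by $0$ at the origin) shows $D^{24}\cR$ is Lipschitz. Combining the pieces yields the stated modulus, and $C^{24,\alpha}$ follows because $t(1+|\log t|)\le C_\alpha t^\alpha$.

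If $P_{25}^{\log}\equiv0$, then $U-\cR$ is a polynomial near $0$. Using $|D^{27}\cR|\le C|x|^{-\varepsilon}$ and $|D^{26}\cR|\le C|x|^{1-\varepsilon}$, the same case split gives $|D^{26}\cR(x)-D^{26}\cR(y)|\le C|x-y|^{1-\varepsilon}$: in the near case, integrate along the segment; in the far case, use the pointwise bound. Since $\varepsilon$ is arbitrary, $U\in C^{26,\alpha}$ for all $\alpha<1$.

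The only genuinely delicate point is that the remainder bounds are singular at $0$ for higher derivatives. The two-case splitting handles this uniformly. I expect no serious obstacle beyond verifying that $D^{24}\Phi$ has exactly the structure described.
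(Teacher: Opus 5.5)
Your proposal is correct and follows the paper's route. The paper's proof is the one-line instruction to take twenty-fifth derivatives in \eqref{e:U plane expand} and integrate, and you fill in the details it omits: smoothness of $U$ away from the extinction point, the homogeneity structure of $D^{24}(P_{25}^{\log}\log|x|)$, and the near/far case split that handles the singular bounds on $D^{25}\cR$ and $D^{27}\cR$. The reduction on $K\setminus B_{r_0/2}$ needs one small fix: apply the mean value theorem on the convex hull of $K$, which lies in $\Omega$ by convexity, and handle pairs with $|x-y|\ge r_0/4$ by boundedness of $D^{24}U$, since a segment between two far points can pass near the origin.
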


\begin{proof}
    Take twenty-fifth derivatives in \eqref{e:U plane expand} and then integrating gives the estimates.
\end{proof}

Combining Corollary \ref{c:C24 reg} with the non-$C^{25}$ examples of Huang-Zhu \cite{huangzhu} gives the sharpness of the
planar regularity.

\begin{corollary}\label{c:not C24,1}
    There exists a planar arrival time function $U$ defined in $\Omega\subset \RR^2$ that is not $C^{24,1}$. 
\end{corollary}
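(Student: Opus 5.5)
The plan is to combine the expansion of Theorem \ref{t:arrival higher-order expand} with the existence result of the companion paper \cite[Theorem 1.3]{huangzhu}. That result provides a smooth closed convex curve whose curve shortening flow has, in the notation of Theorem \ref{t:planar graph expand}, a nonzero resonant coefficient $f_{23/2,1}\in E_5$. Take $U$ to be the arrival time of this flow, $\Omega$ the convex region it bounds, and $x_0$ the extinction point, translated to $0$. By the explicit formula in the proof of Theorem \ref{t:arrival higher-order expand},
\begin{equation*}
P_{25}^{\log}=2\sqrt2\,(r/\sqrt2)^{25} f_{23/2,1}(\sqrt2\omega),
\end{equation*}
so $P_{25}^{\log}$ is a nonzero homogeneous polynomial of degree $25$. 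In fact it is a harmonic one, $\mathrm{Re}(c z^5)\,|z|^{20}$ up to constants, with $c\neq0$.

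Next I would isolate the non-Lipschitz part of $D^{24}U$. In \eqref{e:U plane expand}, the polynomials $|x|^2/2+\sum P_d$ are smooth. The remainder satisfies $|D^{25}\cR(x)|\le C|x|^{2-\varepsilon}$, so $D^{24}\cR$ is Lipschitz near $0$: integrate along segments, noting that segments avoiding a neighborhood of the origin are harmless and the bound is integrable anyway. Hence it suffices to show that $D^{24}\big(P_{25}^{\log}(x)\log|x|\big)$ is not Lipschitz at $0$.

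For a homogeneous polynomial $P$ of degree $25$, Leibniz gives
\begin{equation*}
D^{24}(P\log|x|)=D^{24}P\cdot\log|x| + H(x),
\end{equation*}
where $H$ is a finite sum of products of $D^{24-j}P$ with $D^{j}\log|x|$ for $j\ge1$. Each such term is homogeneous of degree $1$, hence Lipschitz away from the origin and bounded by $C|x|$. The term $D^{24}P$ is a nonzero linear map $x\mapsto A x$, because $P\not\equiv0$ and a degree-$25$ polynomial with vanishing $24$th derivatives would have degree at most $23$. So along a ray $x=te$ with $Ae\neq0$ we get
\begin{equation*}
|D^{24}U(te)-D^{24}U(0)|\ge |Ae|\,t\,|\log t| - Ct,
\end{equation*}
which is not $O(t)$. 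Therefore $D^{24}U$ is not Lipschitz at $0$, and $U\notin C^{24,1}$. Here $D^{24}U(0)$ is given by the smooth part alone, since the other contributions are $O(|x|)$ and vanish at $0$.

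The main obstacle is not in this deduction, which is routine, but in guaranteeing $f_{23/2,1}\neq0$ for some convex flow. By \eqref{formula;F;7}, $f_{23/2,1}$ is the $E_5$-projection of $G_{23/2}$, a nonlinear combination of the lower coefficients $f_{m/2,0}$. One has to show that this resonant coefficient can be made nonzero while the flow remains convex. That is the content of \cite{huangzhu} (prescribing the lower-order expansion, e.g.\ a suitable $E_2$/$E_3$/$E_4$ data, and checking a nonvanishing algebraic expression), and I would simply cite it.
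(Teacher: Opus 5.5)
Your proposal is correct and follows essentially the paper's argument. You obtain $P_{25}^{\log}\not\equiv0$ from \cite[Theorem 1.3]{huangzhu}: the paper's proof cites it for a non-$C^{25}$ example and then uses the last clause of Corollary \ref{c:C24 reg}, while you read off $f_{23/2,1}\neq0$ directly, as the introduction describes. Then, as in the paper, the twenty-fourth derivative of $P_{25}^{\log}\log|x|$ grows like $t|\log t|$ along a suitable ray while every other term of \eqref{e:U plane expand} has bounded Lipschitz quotient.

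One side remark is false, though you never use it: $P_{25}^{\log}$ is not harmonic, since $\Delta\big(r^{25}\cos 5\theta\big)=600\,r^{23}\cos 5\theta\neq0$.
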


\begin{proof}
    By \cite[Theorem 1.3]{huangzhu}, there exists some planar arrival time function $U$ that is not $C^{25}$. By Corollary \ref{c:C24 reg}, we know that $P^{\log}_{25} \not\equiv 0$. Choose $\omega$ with $P_{25}^{\log}(\omega)\neq 0$. Along $x= t \omega$, the twenty-fourth directional derivative of the logarithmic term is
    $25! P_{25}^{\log}(\omega) t \log t + O(t)$ as $t\downarrow0$, whereas the other terms have a bounded Lipschitz quotient.  Then the expansion \eqref{e:U plane expand} implies directly that $U$ is not $C^{24,1}$. This completes the proof. 
\end{proof}

\begin{proof}[Proof of Theorem \ref{t:intro planar}]
    This is a direct consequence of Corollaries \ref{c:C24 reg} and \ref{c:not C24,1}.
\end{proof}

\subsection{Optimal regularity in higher dimensions}

The derivative estimates from Section \ref{s:fine asymptotic expansion}
also gives the optimal regularity in dimensions $n\ge2$, thus proving Theorem \ref{t:intro higher}. We will prove that the arrival time function is $C^{2,\frac{2}{n}}$ in $\Omega \subset \RR^{n+1}$ for any $n\ge 2$. Combining with the work by \v{S}e\v{s}um \cite{se} proves that this regularity is sharp (see also \cite{str}). 

\begin{theorem}\label{t:higher dim regularity}
    Let $n\ge 2 $  and let $U$ be the arrival time of a smooth closed convex flow $M_t^n \subset \RR^{n+1}$ shrinking to $(0,T)$. Then there exist a homogeneous harmonic quadratic polynomial $H_2$ and a remainder term $\cR$ such that, for every $0<\varepsilon <1$,\begin{equation}\label{e:higher dim arrival expand}
 U(x) = T-\frac{|x|^2}{2n}
   + |x|^{2/n} H_2(x) +\cR(x),
 \qquad
 |D^\beta \cR(x)|
 \le C_{\beta, \varepsilon} |x|^{2+(3-\varepsilon)/n-|\beta|}
\end{equation}
for every multi-index $\beta$ and small $|x|$. In particular,
$U\in C^{2,2/n}$ for $n\ge 2$. 
\end{theorem}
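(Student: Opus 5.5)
The plan is to rerun the planar argument of Theorem \ref{t:arrival higher-order expand}, but truncating the expansion of Corollary \ref{Cor;asymptotic;3} at its first nontrivial order.

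\textbf{Step 1: the graph function.} Apply Corollary \ref{Cor;asymptotic;3} with $m=2$. The smallest element of $\mathscr S$ is $\lambda_2=\frac{2(n+1)}{2n}-1=\frac1n=\frac{2}{2n}$. By \eqref{Def;F;3}, $F_{1/n}=H_2$ is a time-independent element of $E_2$, possibly zero. Hence
\begin{equation*}
v(\tau)=e^{-\tau/n}H_2+R_2(\tau),\qquad \|\partial_\tau^a R_2\|_{C^\ell}=O\big(e^{-\frac{(3-\varepsilon)\tau}{2n}}\big).
\end{equation*}
Substitute $s=e^{-\tau/2}$ and use \eqref{e:partial s to tau}. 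In the $O_*$ notation this becomes
\begin{equation*}
r(\omega,s)=s\big(\sqrt{2n}+s^{2/n}h(\omega)\big)+O_*\big(s^{1+(3-\varepsilon)/n}\big),\qquad h(\omega)=H_2(\sqrt{2n}\,\omega).
\end{equation*}
Lemma \ref{l:inverse formula} with $\alpha=2/n$ then gives a unique inverse $s(\omega,r)$.

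\textbf{Step 2: approximate inverse and the expansion of $U$.} Set $\rho=r/\sqrt{2n}$ and take the explicit candidate
\begin{equation*}
\tilde s=\rho-\frac{\rho^{1+2/n}h(\omega)}{\sqrt{2n}}.
\end{equation*}
Expanding $r(\omega,\tilde s)$ by the binomial formula shows that the order-$r^{1+2/n}$ terms cancel. What is left is $r(\omega,\tilde s)-r=O_*(r^{1+4/n})+O_*(r^{1+(3-\varepsilon)/n})=O_*(r^{1+(3-\varepsilon)/n})$. The second part of Lemma \ref{l:inverse formula} then yields $s-\tilde s=O_*(r^{1+(3-\varepsilon)/n})$. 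Squaring gives
\begin{equation*}
T-U=s^2=\frac{r^2}{2n}-\frac{2}{\sqrt{2n}}\,\rho^{2+2/n}h(\omega)+O_*\big(r^{2+(3-\varepsilon)/n}\big).
\end{equation*}
The function $h$ is a degree-2 spherical harmonic. Therefore $r^2h(\omega)$ is a harmonic homogeneous quadratic polynomial, and after absorbing constants the middle term equals $-|x|^{2/n}\tilde H_2(x)$ with $\tilde H_2$ harmonic quadratic. Rename $-\tilde H_2$ as the $H_2$ of the statement. Converting $O_*$ into Euclidean derivative bounds via \eqref{e:O* to Euclidean} gives \eqref{e:higher dim arrival expand}.

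\textbf{Step 3: $C^{2,2/n}$ regularity.} Away from the extinction point, $DU\neq0$ for a convex flow, so $U$ is smooth there; only a neighbourhood of $0$ matters. The term $|x|^{2/n}H_2$ is homogeneous of degree $2+2/n$ and smooth off the origin. Its second derivatives are therefore homogeneous of degree $2/n<1$ and smooth on the sphere. A standard scaling argument shows they are $2/n$-Hölder: compare two points on a common sphere, then along a ray.

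For the remainder, fix $\varepsilon$ small so that $\gamma=(3-\varepsilon)/n>2/n$. We have $|D^2\cR|\le C|x|^{\gamma}$ and $|D^3\cR|\le C|x|^{\gamma-1}$. Take points $x,y$ with $|x|\ge|y|$.
\begin{itemize}
\item If $|x-y|\le|x|/2$, the mean value theorem on the segment gives $|D^2\cR(x)-D^2\cR(y)|\le C|x|^{\gamma-1}|x-y|\le C|x-y|^{\gamma}$, using $\gamma<1$. (If $\gamma\ge1$, then $\cR\in C^{2,1}$ locally and the conclusion is immediate.)
\item If $|x-y|>|x|/2$, both terms are bounded by $C|x|^{\gamma}\le C|x-y|^{\gamma}$.
\end{itemize}
Either way, $D^2\cR$ is $C^{0,\gamma}\subset C^{0,2/n}$ locally.

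\textbf{Expected obstacle.} I expect the main care to be needed in Step 2. It requires checking that the cross terms in $r(\omega,\tilde s)$ are genuinely $O_*$, which includes control of the $(s\partial_s)$-derivatives. The remaining estimates are routine once that is verified.
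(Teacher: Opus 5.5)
Your proposal is correct and is essentially the paper's proof. The one variation is that you get the refined inverse from an explicit candidate $\tilde s$ and the second part of Lemma \ref{l:inverse formula}, as in the planar Step 2, whereas the paper substitutes the crude inverse $s=r/\sqrt{2n}+O_*(r^{1+2/n})$ back into the implicit relation; your explicit H\"older argument in Step 3 also fills in what the paper leaves as ``in particular.'' Two harmless slips: since $U=T-(T-U)$, the statement's $H_2$ is $\tilde H_2$ rather than $-\tilde H_2$; and for $n=2$ one has $2/n=1$, not $2/n<1$, but the second derivatives of the homogeneous term are then homogeneous of degree one and hence Lipschitz, so the conclusion still holds.
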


\begin{proof}
    Let $v$ be the graph function of the flow on $\mathbb{S}^{n}(\sqrt{2n})$. By Corollary \ref{Cor;asymptotic;3} with $m=2$, for all $a,\ell\geq 0$ we have
    \begin{equation*}
	\Big\Vert \partial_{\tau}^a \Big(v - e^{-\tau/n}F_{1/n} \Big)\Big\Vert_{C^\ell(\Sigma)} = O(e^{-\frac{3-\varepsilon}{2n}\tau}),
    \end{equation*}
where $F_{1/n} \in E_2$. Write $x=r\omega$ and $s = \sqrt{T -t }= e^{-\tau/2}$. By \eqref{e:x and v, s} and \eqref{e:partial s to tau} we have
\begin{equation}\label{e:high dim r 1}
    r= s \big(\sqrt{2n} + v(\sqrt{2n} \omega, -2\log s) \big) = \sqrt{2n} \cdot s + F_{1/n}(\sqrt{2n} \omega) \cdot s^{1+2/n} + O_*(s^{1+\frac{3-\varepsilon}{n}}).
\end{equation}

By Lemma \ref{l:inverse formula}, we have $s = r/\sqrt{2n} + O_*(r^{1+2/n})$. Substituting this into \eqref{e:high dim r 1} we obtain
\begin{align*}
    s &= \frac{r}{\sqrt{2n}} - \frac{F_{1/n}(\sqrt{2n}\omega)}{\sqrt{2n}} \big( \frac{r}{\sqrt{2n}} + O_*(r^{1+\frac{2}{n}}) \big)^{1+\frac{2}{n}} + O_*(r^{1+\frac{3-\varepsilon}{n}}) \\
    &= \frac{r}{\sqrt{2n}} - \frac{F_{1/n}(\sqrt{2n} \omega)}{\sqrt{2n}} \big( \frac{r}{\sqrt{2n}} \big)^{1+\frac{2}{n}}  + O_*(r^{1+\frac{3-\varepsilon}{n}}).
\end{align*}

Since $T - U(x) = s^2$, we have
\begin{equation*}
    T - U(x) = \frac{|x|^2}{2n} - \frac{2 F_{1/n}(\sqrt{2n}\omega)}{\sqrt{2n}} (\frac{|x|}{\sqrt{2n}})^{2+\frac{2}{n}} + O_*(|x|^{2+\frac{3-\varepsilon}{n}}).
\end{equation*}

Since $F_{1/n} \in E_2$, $|x|^2F_{1/n}(\sqrt{2n}\omega)$ is a homogeneous harmonic polynomial in $\RR^{n+1}$. The derivative estimates then follow from $\cR(r\omega) \in O_*(r^{2+\frac{3-\varepsilon}{n}})$. Similar to \eqref{e:O* to Euclidean}, we could transfer the derivative estimates in polar coordinates back to Euclidean coordinates to obtain that $|D^\beta \cR(x)| \le C_{\beta, \varepsilon} |x|^{2+(3-\varepsilon)/n-|\beta|}$. This completes the proof.
\end{proof}

By \cite{se}, there exists some arrival time function in a convex $\Omega \subset \RR^{n+1}$ for which $H_2 \not\equiv 0$ in the expansion \eqref{e:higher dim arrival expand}. This implies that the $C^{2,2/n}$ regularity is sharp. 

Finally we remark that for $n\ge 2$ higher-order terms in the graph expansion of Theorem \ref{Thm;asymptotic;2} and Corollary \ref{Cor;asymptotic;3} can likewise be transferred by finite inversion of \eqref{e:high dim r 1}, giving powers of $|x|$, possible logarithms and remainders with derivative estimates. The leading term expansion suffices for Theorem \ref{t:higher dim regularity}. In dimension $n=1$, this leading term is a polynomial, so higher-order terms are needed to detect the regularity obstruction.

\end{document}